\let\oldtocsection=\tocsection
\let\oldtocsubsection=\tocsubsection
\let\oldtocsubsubsection=\tocsubsubsection
\renewcommand{\tocsection}[2]{\hspace{0em} \vspace{-1.2em} \oldtocsection{#1}{#2}}
\renewcommand{\tocsubsection}[2]{\hspace{1em} \vspace*{-1.2em} \oldtocsubsection{#1}{#2}}
\renewcommand{\tocsubsubsection}[2]{\hspace{2em} \vspace*{-1.2em} \oldtocsubsubsection{#1}{#2}}
\newtheorem{theorem}{Theorem}[section]
\newtheorem{lemma}[theorem]{Lemma}
\newtheorem{corollary}[theorem]{Corollary}
\newtheorem{proposition}[theorem]{Proposition}
\theoremstyle{definition}
\newtheorem{definition}[theorem]{Definition}
\newtheorem{example}[theorem]{Example}
\theoremstyle{remark}
\newtheorem{remark}[theorem]{Remark}
\numberwithin{equation}{section}
\newcommand{\cH}{\mathcal{H}}
\newcommand{\cM}{\mathcal{M}}
\newcommand{\cO}{\mathcal{O}}
\newcommand{\cG}{\mathcal{G}}
\newcommand{\eye}{\mathbbm{1}}
\newcommand{\N}{\mathbb{N}}
\newcommand{\R}{\mathbb{R}}
\newcommand{\cK}{\mathcal{K}}
\newcommand{\spt}{{\rm{spt}}}
\newcommand{\diam}{\operatorname{diam}}
\newcommand{\dist}{\operatorname{dist}}
\newcommand{\Span}{\operatorname{span}}
\newcommand{\aff}{\mathrm{aff}}
\renewcommand{\tilde}{\widetilde}
\renewcommand{\hat}{\widehat}
\newcommand{\curv}{\operatorname{curv}}
\newcommand{\obeta}{\hat{\beta}}
\newcommand{\defeq}{\vcentcolon=}
\renewcommand{\dif}{d}
\def\XXint#1#2#3{{\setbox0=\hbox{$#1{#2#3}{\int}$ }
\vcenter{\hbox{$#2#3$ }}\kern-.6\wd0}}
\newcommand{\restr}{\mathbin{\vrule height 1.6ex depth 0pt width
0.13ex \vrule height 0.13ex depth 0pt width 1.3ex}}
\title{Characterizations of countably $n$-rectifiable Radon measures by higher-dimensional Menger curvatures}
\author{Max Goering}
\address{Department of Mathematics, University of Washington, Seattle, WA
98195, USA}
\email{mgoering@uw.edu}
\date{\today}
\thanks{The author was partially supported by NSF grant number DMS-1664867.}
\subjclass[2010]{Primary 28A75. Secondary 49Q15, 42B99}
\keywords{Measure and integration, calculus of variations and optimal control, harmonic analysis on Euclidean spaces.}
\begin{document}
\maketitle

\begin{abstract}
We provide a characterization of countably $n$-rectifiable measures in terms of $\sigma$-finiteness of the integral Menger curvature. We also prove that a finiteness condition on pointwise Menger curvature can characterize rectifiability of Radon measures. Motivated by the partial converse of Meurer's work by Kolasi{\'n}ski we prove that under suitable density assumptions there is a comparability between pointwise-Menger curvature and the sum over scales of the centered $\beta$-numbers at a point. 
%

\end{abstract}

\vspace{-.4in}
\tableofcontents

\section{Introduction}

In the late 1990s there was a flurry of activity relating $1$-rectifiable sets, boundedness of singular integral operators, the analytic capacity of a set, and the integral Menger curvature in the plane. In 1999 L{\'e}ger extended the results for Menger curvature to $1$-rectifiable sets in higher dimension, as well as to the codimension one case.

A decade later, Lerman and Whitehouse, and later Meurer, found higher-dimensional geometrically motivated generalizations of Menger curvature that yield results about the uniform rectifiability of measures and the rectifiability of sets respectively.

Primarily, these higher-dimensional Menger curvatures have been used to study the regularity of surfaces and knots, for instance, self-avoidance and smoothness of the normal. Herein, we use these tools to find new characterizations of rectifiable Radon measures in arbitrary dimension and codimension (see Theorems \ref{t:maincharacterization} and \ref{t:i1}). Work in progress indicates the characterization in Theorem \ref{t:i1} (4) and (5) is more likely more useful in practice than the characterization in Theorem \ref{t:maincharacterization}. We also relate the pointwise Menger curvature to the sum of the $\beta$ numbers over scales (see Theorem \ref{t:mainbeta}). As a consequence of taking tools from knot theory to answer geometric measure theory questions, we include extra details to ensure this paper is sufficiently self-contained for readers from either discipline. Despite this, when classic arguments make repeat appearances, we attempt to avoid repetition by referring the reader to the analogous argument earlier in the paper.

\subsection{(Uniform) Rectifiability and $\beta_{p}$-coefficients}

Studying rectifiable sets and measures (see Definition \ref{d:cr}) is a central topic in geometric measure theory. In his 1990 work on the Analysts' traveling salesman problem in the plane \cite{jones1990rectifiable} (later generalized to $1$-dimensional sets in $\R^{m}$ by Okikiolu in \cite{okikiolu1992characterization} and to $n$-dimensional sets in $\R^{m}$ by Pajot in \cite{pajot1997conditions}) Peter Jones introduced what are now called the Jones' $\beta$-numbers, which have dominated the landscape in quantitative techniques relating to rectifiability, analytic capacity, and singular integrals.

In the joint monograph on the topic \cite{david1993analysis} David and Semmes laid the framework to understanding the quantitative structures Jones introduced, as well as how to properly generalize these ideas to Ahlfors regular sets and measures higher dimensions. In doing so, they introduced the notion of uniform rectifiability (see Definition \ref{d:ur}).

David and Semmes gave many equivalent characterization of uniform $n$-rectifiability. One characterization is related to Jones' $\beta$-numbers the definition of which is included for completeness. For $1 \le p < \infty$ one defines
\begin{equation} \label{e:betadef}
\beta_{\mu;p}^{n}(x,r) = \inf_{L} \left( \frac{1}{r^{n}} \int_{B(x,r)} \left( \frac{ \dist(y,L)}{r} \right)^{p} \dif \mu(y) \right)^{\frac{1}{p}},
\end{equation}
where the infimum is taken over all $n$-dimensional affine subspaces $L \subset \R^{m}$. When the dimension $n$ is understood from context, the superscript is typically forgotten.

The relevant characterization of uniformly $n$-rectifiable sets was discovered in \cite{david1991singular}.
\begin{theorem}[\cite{david1991singular}] \label{t:betauniformlyrectifiable} \ \\
If $\mu$ is an $n$-Ahlfors regular measure on $\R^{m}$ then the following are equivalent. 

\noindent (1) $\mu$ is uniformly rectifiable. \\
(2) For $1 \le p < \frac{2n}{n-2}$ there exists some $c > 0$ depending on $p$ such that the $\beta_{p}$-numbers satisfy the following so called Carleson-condition.
\begin{equation} \label{e:betapcarleson}
\int_{B(x,R)} \int_{0}^{R} \beta_{\mu;p}^{n}(y,r)^{2} \frac{ \dif r}{r} \dif \mu(y) \le c R^{n} \quad \text{for all } x \in \spt \mu, ~ R > 0.
\end{equation}
\end{theorem}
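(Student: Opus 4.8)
This is the David--Semmes characterization of uniform rectifiability via $\beta_p$-numbers, so I would reconstruct the standard proof, first removing the dependence on $p$ and then establishing the two implications at a convenient exponent. \emph{Step 1 (reduction to $p=2$).} Write the bracketed quantity in \eqref{e:betadef} as an average against the (essentially probability) measure $\nu_{x,r} = \mu(B(x,r))^{-1}\mu\restr B(x,r)$, using $\mu(B(x,r)) \approx r^{n}$ from Ahlfors regularity. For $1 \le p \le 2$, nesting of the $L^{p}(\nu_{x,r})$-norms gives $\beta_{\mu;p}^{n}(x,r) \lesssim \beta_{\mu;2}^{n}(x,r)$ pointwise; for $2 \le p < \tfrac{2n}{n-2}$ it gives the reverse bound $\beta_{\mu;2}^{n}(x,r) \lesssim \beta_{\mu;p}^{n}(x,r)$. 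Hence \eqref{e:betapcarleson} for $p=2$ implies it for every $p \le 2$, and \eqref{e:betapcarleson} for any $2 \le p < \tfrac{2n}{n-2}$ implies it for $p=2$. So it suffices to prove (i) the $p=2$ Carleson condition implies uniform rectifiability, and (ii) uniform rectifiability implies the Carleson condition for each $2 \le p < \tfrac{2n}{n-2}$ (reading ``$p < \tfrac{2n}{n-2}$'' as ``all $p < \infty$'' when $n \le 2$).

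\emph{Step 2 (proof of (i)).} Chebyshev's inequality applied to \eqref{e:betapcarleson} shows that for every $\varepsilon > 0$ the dyadic cubes $Q$ of $\spt\mu$ on which $\beta_{\mu;2}^{n}$ is large form a Carleson family; an Ahlfors-regularity and covering argument upgrades this to the \emph{bilateral} weak geometric lemma (for most cubes there is an $n$-plane close to $\spt\mu$ on the scale of $Q$ \emph{from both sides}). Feeding this into the David--Semmes machinery produces a corona decomposition of $\spt\mu$: the dyadic cubes split into a Carleson family of stopping cubes together with coherent trees, and a stopping-time construction yields on each tree a Lipschitz graph of small constant that well-approximates $\mu$ throughout the tree. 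David and Semmes proved that the existence of such a corona decomposition is equivalent to uniform rectifiability, which gives (i).

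\emph{Step 3 (proof of (ii)).} Run the corona decomposition in the forward direction: uniform rectifiability yields the decomposition into stopping cubes (with Carleson packing) and coherent trees, on each tree $S$ a Lipschitz graph $\Gamma_{S} = \{(z, A_{S}(z)) : z \in \R^{n}\}$ with $A_{S} \colon \R^{n} \to \R^{m-n}$ Lipschitz of controlled norm satisfying $\dist(x,\Gamma_{S}) \lesssim \varepsilon\,\ell(Q)$ for $x$ in $Q \subset S$. For a single Lipschitz graph, Dorronsoro's theorem on affine approximation of Sobolev functions gives $\int_{B(x,R)}\int_{0}^{R} \beta_{\Gamma;p}^{n}(y,r)^{2} \,\tfrac{\dif r}{r}\,\dif\cH^{n}(y) \lesssim \|\nabla A\|_{L^{2}}^{2}\, R^{n}$, and the admissible exponents are exactly $p < 2^{\ast} = \tfrac{2n}{n-2}$, the $L^{2}$-Sobolev conjugate (all $p < \infty$ for $n \le 2$). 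Comparing $\beta_{\mu;p}^{n}$ on a tree with $\beta_{\Gamma_{S};p}^{n}$ plus the approximation error, summing over trees, and using the Carleson packing of the stopping cubes to control the accumulated errors yields \eqref{e:betapcarleson} for $\mu$.

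\emph{The main obstacle.} On side (i) the heart of the matter is the stopping-time construction of the Lipschitz graphs inside the coherent trees and the proof that they genuinely approximate $\mu$ --- this is the technically heaviest part of David--Semmes theory. On side (ii) the sensitive point is the exponent: producing $\beta_{p}$ rather than merely $\beta_{2}$ forces the use of Dorronsoro's estimate at exponents approaching $\tfrac{2n}{n-2}$, and the sharpness of this Sobolev threshold is exactly why the equivalence is stated only for $p < \tfrac{2n}{n-2}$; one must also ensure that the graph-by-graph estimates sum to a single Carleson bound without the stopping-cube errors piling up.
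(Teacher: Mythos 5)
The paper does not contain a proof of Theorem~\ref{t:betauniformlyrectifiable}: it is quoted verbatim from David and Semmes \cite{david1991singular} and invoked throughout as a black box, so there is no in-paper argument for your sketch to be compared against. For what it is worth, your outline correctly identifies the main ingredients of the original David--Semmes proof: the exponent reduction via nesting of $L^p$-averages against the probability measure $\mu(B(x,r))^{-1}\mu\restr B(x,r)$ (valid because of Ahlfors regularity), the passage through a corona decomposition for the implication towards uniform rectifiability, and Dorronsoro's affine-approximation theorem with the Sobolev threshold $\frac{2n}{n-2}$ for the converse along each tree. One imprecision worth flagging: the one-sided Carleson bound on $\beta_p$ gives, via Chebyshev, the \emph{one-sided} weak geometric lemma, and a plane that nearly contains $\spt\mu \cap B(x,r)$ need not itself be close to $\spt\mu$ in $B(x,r)$; upgrading to the bilateral statement and on to a corona decomposition is a substantial part of the David--Semmes argument that your sketch compresses into a single clause. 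Since the paper treats the theorem as a citation, this level of detail is appropriate for present purposes, but it would be a real gap if one were actually required to supply the proof.
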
 
More recently, a much sought after characterization of countably $n$-rectifiable measures (a characterization for measures, similar to the characterization for sets from \cite{pajot1997conditions}) was discovered by Azzam and Tolsa in a pair of papers \cite{tolsa2015characterization} and \cite{azzam2015characterization}. In particular, the following theorem is from \cite{tolsa2015characterization}.

\begin{theorem}[\cite{tolsa2015characterization}] \label{t:tolchar}
Let $1 \le p \le 2$. If $\mu$ is a finite Borel measure on $\R^{m}$ which is countably $n$-rectifiable, then 
\begin{equation}\label{e:betaprectifiable}
\int_{0}^{\infty} \beta_{\mu;p}^{n}(x,r)^{2} \frac{ \dif r}{r} < \infty \quad \text{for } \mu~ a.e. ~ x \in \R^{m}.
\end{equation}
\end{theorem}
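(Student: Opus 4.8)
The plan is to deduce the inclusion from Dorronsoro's theorem after three reductions. Since $\mu(\R^{m})<\infty$, for $r\ge1$ one has $\beta_{\mu;p}^{n}(x,r)^{p}\le r^{-n}\mu(B(x,r))\le r^{-n}\mu(\R^{m})$, so $\int_{1}^{\infty}\beta_{\mu;p}^{n}(x,r)^{2}\frac{\dif r}{r}\lesssim\int_{1}^{\infty}r^{-2n/p}\frac{\dif r}{r}<\infty$ uniformly in $x$, and only the integral over $(0,1)$ needs attention. Next, because $\mu\ll\mathcal{H}^{n}$ forces $\Theta^{*n}(x,\mu)<\infty$ for $\mu$-a.e.\ $x$, the maximal density $M(x):=\sup_{r>0}r^{-n}\mu(B(x,r))$ is finite $\mu$-a.e., and Hölder's inequality applied to the finite measure $r^{-n}\mathbbm{1}_{B(x,r)}\mu$ gives $\beta_{\mu;p}^{n}(x,r)\le M(x)^{\frac1p-\frac12}\beta_{\mu;2}^{n}(x,r)$, so it suffices to treat $p=2$. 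Finally, by the structure theorem $\mu$ is concentrated on a countable union of $n$-dimensional $C^{1}$ submanifolds $\Gamma_{i}$; writing $E_{i}:=(\Gamma_{i}\cap\spt\mu)\setminus\bigcup_{j<i}\Gamma_{j}$ we get disjoint Borel sets with $\mu=\sum_{i}\mu|_{E_{i}}$, and since a countable union of $\mu$-null sets is $\mu$-null it is enough to fix one $\Gamma=\Gamma_{i}$, set $\sigma:=\mathcal{H}^{n}|_{\Gamma}$ and $h:=\dif\mu/\dif\sigma\in L^{1}(\sigma)$, and prove $\int_{0}^{1}\beta_{\mu;2}^{n}(x,r)^{2}\frac{\dif r}{r}<\infty$ for $\mu|_{E_{i}}$-a.e.\ $x$.

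The clean core is the estimate for $\sigma=\mathcal{H}^{n}|_{\Gamma}$. Localizing, take $\Gamma=\{(u,A(u)):u\in\R^{n}\}$ with $A$ Lipschitz and compactly supported (extending and cutting off $A$ outside a large ball only changes large scales, already handled); then $u\mapsto(u,A(u))$ is bi-Lipschitz with bounded Jacobian, $\sigma$ is comparable to the pushforward of $\mathcal{L}^{n}$, and the projection to the first $n$ coordinates is $1$-Lipschitz. For $y=(z,A(z))\in\Gamma$, restricting the infimum in \eqref{e:betadef} to the admissible $n$-planes that are graphs of affine maps $\pi\colon\R^{n}\to\R^{m-n}$ and using $\dist((u,A(u)),\operatorname{graph}\pi)\le|A(u)-\pi(u)|$ gives
\[
\beta_{\sigma;2}^{n}(y,r)^{2}\;\lesssim\;\inf_{\pi}\frac{1}{r^{n}}\int_{B(z,r)}\Big(\frac{|A(u)-\pi(u)|}{r}\Big)^{2}\dif u\;=:\;\Omega_{A}(z,r)^{2},
\]
with $\Omega_{A}$ the Dorronsoro oscillation functional. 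Dorronsoro's inequality (in its $L^{2}$ form, valid in every dimension) gives $\int_{\R^{n}}\int_{0}^{\infty}\Omega_{A}(z,r)^{2}\frac{\dif r}{r}\dif z\lesssim\|\nabla A\|_{L^{2}}^{2}<\infty$; changing variables back to $\Gamma$ we get $\int_{\Gamma}\int_{0}^{\infty}\beta_{\sigma;2}^{n}(y,r)^{2}\frac{\dif r}{r}\dif\sigma(y)<\infty$, hence $\int_{0}^{1}\beta_{\sigma;2}^{n}(x,r)^{2}\frac{\dif r}{r}<\infty$ for $\sigma$-a.e.\ $x\in\Gamma$.

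It remains to transfer from $\sigma$ to $\mu$. Fix $N$ and put $G:=E_{i}\cap\{h\le N\}$, so $E_{i}=\bigcup_{N}G$ up to a $\sigma$-null (hence $\mu$-null) set and it suffices to treat $\mu|_{G}$-a.e.\ $x$ for each $N$. Taking $L=L(x,r)$ optimal for $\mu|_{G}$ at scale $r$ (so $\dist(y,L)\le2r$ on $B(x,r)$) and splitting the integral defining $\beta_{\mu;2}^{n}(x,r)$ over $G$ and its complement,
\[
\beta_{\mu;2}^{n}(x,r)^{2}\;\le\;\beta_{\mu|_{G};2}^{n}(x,r)^{2}+\frac{4\,\mu(B(x,r)\setminus G)}{r^{n}}\;\le\;N\,\beta_{\sigma;2}^{n}(x,r)^{2}+\frac{4\,\mu(B(x,r)\setminus G)}{r^{n}}.
\]
Integrating $\frac{\dif r}{r}$ over $(0,1)$, the first term is finite $\mu|_{G}$-a.e.\ by the previous paragraph (as $\mu|_{G}\ll\sigma$), and Fubini turns the second contribution into a constant times $\int_{B(x,1)\setminus G}|x-y|^{-n}\dif\mu(y)$. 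Thus the theorem reduces to showing this potential of the tail $\mu|_{\{h>N\}}$ is finite for $\mu$-a.e.\ $x\in G$; granting that, one unions over $N$ and over the pieces $\Gamma_{i}$.

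I expect the tail estimate to be the main obstacle. At $\mu$-a.e.\ $x\in G$ one is a density point of $G$, so $\mu(B(x,s)\setminus G)=o(s^{n})$ — but this only yields conditional convergence of $\int_{B(x,1)\setminus G}|x-y|^{-n}\dif\mu(y)$, whereas a summable-in-scale rate of thinness of the tail is genuinely needed. I would obtain it by not choosing $G$ arbitrarily: combine the sublevel set $\{h\le N\}$ with a Lusin set for $h$, pass to a compact subset, and run a stopping-time (corona-type) decomposition isolating the scales and density levels at which $\mu|_{\{h>N\}}$ is non-negligible near $G$; alternatively, one can bound the exceptional set $\{x:\int_{B(x,1)\setminus G}|x-y|^{-n}\dif\mu(y)=\infty\}$ by a set of zero $\mathcal{H}^{n}$-measure via a capacity/energy estimate for the tail and invoke $\mu\ll\mathcal{H}^{n}$. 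It is precisely in this last step that absolute continuity with respect to Hausdorff measure is used in an essential way.
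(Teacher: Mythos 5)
The paper does not prove this theorem; it is quoted from Tolsa's 2015 paper, so I evaluate your argument on its own merits. Your preliminary reductions are sound: the large-scale estimate from finiteness of $\mu$; the H\"older reduction to $p=2$ via the maximal density $M(x)$, which is indeed finite $\mu$-a.e.\ because $\mu\ll\cH^{n}$ forces $\Theta^{n,*}(\mu,x)<\infty$ a.e.; the structure-theorem decomposition into $C^{1}$ pieces; the bound $\beta_{\sigma;2}^{n}(y,r)\lesssim\Omega_{A}(z,r)$ after projecting to the parameter domain; and the appeal to Dorronsoro's inequality with $\|\nabla A\|_{L^{2}}<\infty$ for compactly supported Lipschitz $A$. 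The splitting $\beta_{\mu;2}^{n}(x,r)^{2}\le N\,\beta_{\sigma;2}^{n}(x,r)^{2}+4r^{-n}\mu(B(x,r)\setminus G)$ and the Fubini reduction of the tail contribution to $\int_{B(x,1)\setminus G}|x-y|^{-n}\,\dif\mu(y)$ are also correct.

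But you correctly flag the genuine gap yourself, and it is not a technicality: the tail estimate is the heart of the theorem. Your second proposed route, a capacity or energy argument showing that $\{x:\int_{B(x,1)\setminus G}|x-y|^{-n}\,\dif\mu(y)=\infty\}$ has $\cH^{n}$-measure zero, cannot work as stated: in $\R^{m}$ the set where the $n$-Riesz potential of a finite measure diverges can have positive, even full, $\cH^{n}$-measure. For example, if $\nu$ is a nondegenerate $n$-Ahlfors regular measure on an $n$-plane $L$, then for every $x\in\spt\nu$ one has $I_{n}\nu(x)=n\int_{0}^{\infty}\nu(B(x,s))s^{-n-1}\,\dif s=\infty$, since $\nu(B(x,s))\sim s^{n}$ near $0$. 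Any argument must therefore exploit the specific structure of $G$ and the tail; but, as you observe, being a $\mu$-density point of $G$ only gives $\mu(B(x,s)\setminus G)=o(s^{n})$, and $o(1)/s$ is not integrable in $\dif s/s$. Your first route --- refining the choice of $G$ and running a stopping-time/corona decomposition to force summable-in-scale thinness of the tail --- is the right instinct and is in the spirit of what Tolsa actually does, but as written it is a statement of intent rather than an argument; carrying it out is the substantial content of the theorem, not a loose end. So the proposal identifies the correct ingredients and the correct obstacle, but the proof is incomplete at the essential step.
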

The converse has hypothesis on the densities of the measure (see Definition \ref{d:densities}).

\begin{theorem} [\cite{azzam2015characterization}] \label{t:azzchar}
 Let $\mu$ be a finite Borel measure in $\R^{m}$ such that $0 < \Theta^{n,*}(\mu,x) < \infty$ for $\mu$-a.e. $x \in \R^{m}$. If
\begin{equation} \label{e:beta2rectifiable}
\int_{0}^{\infty} \beta^{n}_{\mu;2}(x,r)^{2} \frac{ \dif r}{r} < \infty \quad \mu~ a.e. ~ x \in \R^{m},
\end{equation}
then $\mu$ is countably $n$-rectifiable.
\end{theorem}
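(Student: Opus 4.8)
The plan is to follow the strategy of \cite{azzam2015characterization}. First I would reduce to the purely unrectifiable case. Since $\Theta^{n,*}(\mu,\cdot)<\infty$ $\mu$-a.e., the density comparison theorem gives $\mu\ll\cH^{n}$, so $\mu$ admits the standard decomposition $\mu=\mu\restr A+\mu\restr A^{c}$ with $\mu\restr A$ carried by countably many Lipschitz $n$-graphs and $\nu\defeq\mu\restr A^{c}$ purely $n$-unrectifiable. Then $\beta^{n}_{\nu;2}\le\beta^{n}_{\mu;2}$ pointwise, and at $\nu$-a.e.\ $x$ one has $0<\Theta^{n,*}(\nu,x)<\infty$ — the lower bound because, at a $\mu$-density point of $A^{c}$, $\nu(B(x,r))\sim\mu(B(x,r))$ — so $\nu$ inherits all the hypotheses. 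It therefore suffices to show that a purely $n$-unrectifiable measure satisfying the hypotheses must vanish; equivalently, that a nonzero such measure $\nu$ contains a Lipschitz $n$-graph of positive $\nu$-mass. Next I would perform the usual localizations: splitting $\R^{m}$ into balls makes $\nu$ finite and compactly supported, and Egorov applied to the a.e.-finite integral $\int_{0}^{\infty}\beta^{n}_{\nu;2}(x,r)^{2}\tfrac{\dif r}{r}$ and to the upper density produces a compact $F$ with $\nu(F)>0$, a radius $R_{0}$, and constants $C_{0},\varepsilon$ (with $\varepsilon$ to be chosen small) such that $\nu(B(x,r))\le C_{0}r^{n}$ and $\int_{0}^{R_{0}}\beta^{n}_{\nu;2}(x,r)^{2}\tfrac{\dif r}{r}\le\varepsilon$ for all $x\in F$, $r\le R_{0}$. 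As pure unrectifiability and the bound $\beta^{n}_{(\nu\restr F);2}\le\beta^{n}_{\nu;2}$ pass to $\nu\restr F$, I may replace $\nu$ by $\nu\restr F$ and assume these estimates hold $\nu$-a.e.

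The heart of the argument is a local construction. Fix a $\nu$-density point $x_{0}$ of $\spt\nu$; since $\Theta^{n,*}(\nu,x_{0})>0$ there are arbitrarily small $R\le R_{0}$ with $\nu(B(x_{0},R))\gtrsim R^{n}$, and I fix one. Using $\int_{0}^{R}\beta^{n}_{\nu;2}(x_{0},r)^{2}\tfrac{\dif r}{r}\le\varepsilon$ I would pick an $n$-plane $L_{0}$ through $x_{0}$ that $L^{2}$-approximates $\nu$ well in $B(x_{0},R)$, and then run a stopping-time argument over balls (or a dyadic lattice) centred on $\spt\nu$ inside $B(x_{0},R)$: stop a cube $Q$ when its best approximating plane $L_{Q}$ has tilted by more than a small angle $\tau$ from $L_{0}$, when $\beta^{n}_{\nu;2}(x_{Q},\diam Q)>\tau$, or when the local density ratio has oscillated too much. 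On the tree of surviving cubes the planes $L_{Q}$ are all graphs over $L_{0}$ of slope $\lesssim\tau$, and a Whitney-type partition of unity on $L_{0}$ would glue them into a single map $A\colon L_{0}\to L_{0}^{\perp}$ with $\lip(A)\lesssim\tau$, whose graph $\Gamma$ contains every point of $\spt\nu$ lying in good cubes of all generations.

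To close, I would bound the $\nu$-mass that $\Gamma$ misses — the mass of the maximal stopped cubes — by a Carleson-embedding estimate. For the $\beta$-stopped cubes,
\begin{multline*}
\sum_{Q\ \beta\text{-stopped}}\nu(Q)\ \le\ \tau^{-2}\sum_{Q}\beta^{n}_{\nu;2}(x_{Q},\diam Q)^{2}\nu(Q) \\
\lesssim\ \tau^{-2}\int_{B(x_{0},R)}\!\!\int_{0}^{R}\beta^{n}_{\nu;2}(x,r)^{2}\,\tfrac{\dif r}{r}\,\dif\nu(x)\ \le\ \tau^{-2}\varepsilon\,\nu(B(x_{0},R)),
\end{multline*}
using the comparability of a cube's $\beta$-number with those of nearby balls of comparable radius; the cubes stopped for directional drift of $L_{Q}$ or for density oscillation should be estimated the same way, since at each scale those quantities are dominated by the $\beta$-numbers (for the directions, because $L_{Q}$ and its parent's plane both approximate $\nu$ on $Q$; for the densities, via the square-function/corona machinery of \cite{tolsa2015characterization}). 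Choosing $\varepsilon$ small relative to $\tau^{2}$ then yields $\nu(\Gamma\cap B(x_{0},R))\ge\tfrac12\nu(B(x_{0},R))\gtrsim R^{n}>0$, so $\Gamma$ is a Lipschitz $n$-graph of positive $\nu$-mass, contradicting the pure $n$-unrectifiability of $\nu$. Hence $\mu\restr A^{c}=0$, i.e.\ $\mu$ is carried by countably many Lipschitz $n$-graphs and is countably $n$-rectifiable.

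The hard part will be the last two steps, which are essentially the entire content of \cite{azzam2015characterization}: because $\mu$ is only assumed to have positive finite upper density rather than to be Ahlfors regular, the uniform-rectifiability machinery is not directly available, and one must both build the Whitney graph with $\lip(A)$ controlled solely by the localized smallness of the square function and show that \emph{every} stopping mechanism is Carleson-controlled by $\int\beta^{n}_{\mu;2}(\,\cdot\,,r)^{2}\tfrac{\dif r}{r}$. The most delicate of these is the density-oscillation stopping — it is exactly what ensures a definite fraction of the mass survives on $\Gamma$ — and proving that the scale-to-scale oscillation of the density of a merely $n$-growth measure is dominated by its $L^{2}$-flatness is where I would expect the real work, and the main obstacle, to lie.
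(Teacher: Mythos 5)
The paper does not prove Theorem~\ref{t:azzchar}: it is quoted verbatim with the citation \cite{azzam2015characterization} and used as a black box (e.g.\ in the proof that (1)~$\Rightarrow$~(2) of Theorem~\ref{t:mainCharacterization}). So there is no in-paper proof to compare against; the relevant comparison is to the Azzam--Tolsa argument itself.

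Your sketch correctly reproduces the easy reductions, and these do match the toolkit the paper assembles elsewhere: $\Theta^{n,*}(\mu,\cdot)<\infty$ a.e.\ gives $\mu\ll\cH^{n}$ (noted after Definition~\ref{d:densities}), the rectifiable/purely-unrectifiable splitting is Lemma~\ref{l:RandURparts}, the monotonicity $\beta^{n}_{\mu\restr F;2}\le\beta^{n}_{\mu;2}$ is immediate from \eqref{e:betadef}, and the Egorov localization to a compact set with uniform upper-growth and a small square function is the same normalization pattern used in Lemmata~\ref{l:step0} and~\ref{l:53}. The gap is where you already suspect it. The Carleson estimate you display bounds only the $\beta$-stopped cubes; for the tilt-stopped and density-stopped cubes you assert ``should be estimated the same way, \dots\ via the square-function/corona machinery of \cite{tolsa2015characterization},'' but that is circular: \cite{tolsa2015characterization} is the \emph{forward} direction (rectifiable $\Rightarrow$ finite square function) and contains no lemma bounding density oscillation of a non-Ahlfors-regular $n$-growth measure by its $\beta$-numbers. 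That bound is exactly the new technical content of \cite{azzam2015characterization} (their flatness-implies-density-stability lemma and the accompanying David--Mattila-style dyadic lattice with low-density cubes treated as stopping cubes), and it cannot be replaced by a generic packing or Carleson-embedding argument because, without two-sided regularity, there is no a priori relation between $\nu(Q)$ at consecutive generations. As written, the step ``a definite fraction of the mass survives on $\Gamma$'' is therefore unproven, and the proposal is an outline of the Azzam--Tolsa strategy rather than a proof.
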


Later, \cite{edelen2016quantitative} (with an alternative proof in \cite{tolsa2017rectifiability}) removed the \emph{a priori} assumption of absolute continuity with respect to the Hausdorff measure. Together, these results can be summarized in the following theorem.

\begin{theorem}[\cite{edelen2016quantitative}] \label{t:ENV} 
Let $\mu$ be a Radon measure in $\R^{m}$ with $0 < \Theta^{n,*}(\mu,x)$ and $\Theta^{n}_{*}(\mu,x) < \infty$. Then $\mu$ is countably $n$-rectifiable if and only if 
\begin{equation} \label{e:i1}
\int_{0}^{1} \beta_{\mu,2}^{n}(x,r)^{2} \frac{ \dif r}{r} < \infty \quad \text{for } \mu~ a.e. ~ x \in \R^{m}.
\end{equation}
\end{theorem}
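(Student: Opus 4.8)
I would prove the two implications by entirely different means, and the density hypotheses matter only for the harder direction \eqref{e:i1} $\Rightarrow$ rectifiability. For \emph{rectifiability} $\Rightarrow$ \eqref{e:i1} I would simply localize Theorem~\ref{t:tolchar}. Since $\mu$ is Radon it is $\sigma$-finite, so write $\R^{m}=\bigcup_{j}B_{j}$ with $B_{j}$ open balls and set $\nu_{j}:=\mu\restr B_{j}$; each $\nu_{j}$ is a finite Borel measure, still countably $n$-rectifiable, so Theorem~\ref{t:tolchar} with $p=2$ gives $\int_{0}^{\infty}\beta_{\nu_{j};2}^{n}(x,r)^{2}\,\frac{\dif r}{r}<\infty$ for $\mu$-a.e.\ $x\in B_{j}$. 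For $x$ in the interior of $B_{j}$ and $r<\dist(x,\partial B_{j})$ one has $B(x,r)\subset B_{j}$, hence $\beta_{\mu;2}^{n}(x,r)=\beta_{\nu_{j};2}^{n}(x,r)$, while on $r\in[\dist(x,\partial B_{j}),1]$ the trivial bound $\beta_{\mu;2}^{n}(x,r)^{2}\le\mu(B(x,r))/r^{n}$ (take the competitor plane through $x$) keeps the integral finite; summing over $j$ yields \eqref{e:i1}. No density hypothesis is used here.

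For \eqref{e:i1} $\Rightarrow$ \emph{rectifiability} the plan is to reduce to Theorem~\ref{t:azzchar}, whose only missing hypothesis is finiteness of the \emph{upper} density, since we are handed only $\Theta^{n}_{*}(\mu,\cdot)<\infty$. I would first localize: intersecting $\spt\mu$ with balls and with the sets $\{x:\int_{0}^{1}\beta_{\mu;2}^{n}(x,r)^{2}\,\frac{\dif r}{r}\le N,\ \liminf_{r\to0}\mu(B(x,r))/r^{n}<N,\ \Theta^{n,*}(\mu,x)>1/N\}$, letting $N\to\infty$, and using that a countable union of countably $n$-rectifiable sets is countably $n$-rectifiable, it suffices to treat a finite, compactly supported $\mu$ satisfying all three bounds with a single constant $N$ for $\mu$-a.e.\ $x$ (these bounds survive restriction, the first two trivially and the third at $\mu$-density points).

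The hard step is to upgrade the a.e.\ lower-density bound to a \emph{uniform upper $n$-growth bound on a set of almost full mass}: for each $\varepsilon>0$ one should find a Borel $G$ with $\mu(\R^{m}\setminus G)<\varepsilon$ and $C=C(N)$ such that $\mu(B(x,r))\le Cr^{n}$ for all $x\in G$ and all small $r$. The mechanism is the interplay of the two surviving hypotheses: at a scale $r$ where $\mu(B(x,r))\lesssim_{N}r^{n}$, the summability $\sum_{k}\beta_{\mu;2}^{n}(x,2^{-k})^{2}\le N$ traps the mass of $B(x,\rho)$ near an $n$-plane at all smaller dyadic scales $\rho$, the optimal planes at consecutive scales are nearly parallel with angle controlled by the $\beta_{2}$'s, and hence consecutive density ratios $\mu(B(x,\rho))/\rho^{n}$ change by a factor at most $1+C\beta_{\mu;2}^{n}(x,\rho)$; summing logarithms and using Cauchy--Schwarz against $\sum\beta_{2}^{2}\le N$ bounds the total distortion, while a Vitali/stopping-time covering pins down, for most $x$, a definite scale at which the lower-density bound is attained. (Alternatively one bypasses Theorem~\ref{t:azzchar} and feeds the resulting Carleson-type packing estimate into the discrete Reifenberg theorem of Naber--Valtorta, as in \cite{edelen2016quantitative}.) Granting this, $\nu:=\mu\restr G$ is finite, has $\Theta^{n,*}(\nu,\cdot)$ bounded, has $\Theta^{n,*}(\nu,x)>0$ for $\nu$-a.e.\ $x$ (density points of $G$ retain positive upper density), and satisfies $\beta_{\nu;2}^{n}(x,r)\le\beta_{\mu;2}^{n}(x,r)$, so with the trivial bound on $[1,\infty)$ one gets $\int_{0}^{\infty}\beta_{\nu;2}^{n}(x,r)^{2}\,\frac{\dif r}{r}<\infty$ for $\nu$-a.e.\ $x$; Theorem~\ref{t:azzchar} then makes $\nu=\mu\restr G$ countably $n$-rectifiable, and sending $\varepsilon\to0$ and unioning the sets $G$ exhausts $\mu$ up to a null set. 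I expect the uniform-growth reduction to be essentially the whole difficulty --- converting uncontrolled upper-density behavior into an Ahlfors-type bound via the $\beta_{2}$-summability --- with the localization and exhaustion around it being routine.
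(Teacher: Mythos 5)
Theorem~\ref{t:ENV} is cited in this paper without proof; it is due to \cite{edelen2016quantitative}, with an alternative proof in \cite{tolsa2017rectifiability}, so your attempt must be measured against those sources rather than against text here. Your first direction, rectifiability $\Rightarrow$ \eqref{e:i1}, is correct and self-contained: restricting to open balls $B_{j}$ so that Theorem~\ref{t:tolchar} applies to the finite measures $\nu_{j}=\mu\restr B_{j}$, using $\beta^{n}_{\nu_{j};2}(x,r)=\beta^{n}_{\mu;2}(x,r)$ once $B(x,r)\subset B_{j}$, and disposing of the remaining compact range $r\in[\dist(x,\partial B_{j}),1]$ with the trivial estimate $\beta^{n}_{\mu;2}(x,r)^{2}\le\mu(B(x,r))/r^{n}$ is a clean localization and needs no density hypotheses, as you say.

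The converse direction, however, has a genuine gap at its core. Your overall plan --- localize, produce an a.e.\ finite upper density, then invoke Theorem~\ref{t:azzchar}, or alternatively feed a Carleson packing bound into the Naber--Valtorta discrete Reifenberg theorem --- is indeed the structure of \cite{tolsa2017rectifiability} and \cite{edelen2016quantitative}. But the mechanism you offer for the decisive step, namely that ``consecutive density ratios $\mu(B(x,\rho))/\rho^{n}$ change by a factor at most $1+C\beta^{n}_{\mu;2}(x,\rho)$,'' is false. The quantity $\beta^{n}_{\mu;2}(x,\rho)$ carries no information about how the mass of $B(x,\rho)$ is distributed \emph{along} or \emph{radially within} the approximating plane: already for $n=1$, $m=2$ and $\mu$ supported on a line through $x$ one has $\beta^{1}_{\mu;2}(x,\rho)\equiv0$, while $\mu(B(x,\rho))/\rho$ can jump by an arbitrary factor between $\rho$ and $\rho/2$ (concentrate mass in a tiny sub-ball near $x$). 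So $\beta_{2}$-smallness together with nearly parallel best planes does not prevent density spikes, and the telescoping-plus-Cauchy--Schwarz computation you sketch does not run. In the actual proofs, the lower-density hypothesis must be exploited at \emph{every} scale of a stopping-time/corona construction (Tolsa) or of the inductive packing argument behind the discrete Reifenberg theorem (Edelen--Naber--Valtorta); converting $\Theta^{n}_{*}(\mu,\cdot)<\infty$ together with $\sum_{k}\beta^{n}_{\mu;2}(x,2^{-k})^{2}<\infty$ into $\Theta^{n,*}(\mu,\cdot)<\infty$ a.e.\ is precisely the substantial technical content of those papers, and your sketch does not supply a correct substitute for it.
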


We note that prior to \cite{edelen2016quantitative} some partial results on rectifiability of measures without the \emph{a priori} assumption of absolute continuity with respect to the Hausdorff measure were also demonstrated in \cite{badger2015multiscale}, \cite{badger2016two}, \cite{badger2017multiscale}.

In light of the characterization for uniformly rectifiable measures, one might expect that the restriction to $p =2$ in Theorem \ref{t:azzchar} is a consequence of the proof, and potentially a range of values for $p$ could be found. However, in \cite{tolsa2017rectifiability} a very flexible construction of $1$-rectifiable sets with positive and finite measure was laid out, which, by varying the parameters of this construction demonstrate that for $1 \le p < 2$ no conclusion can be drawn from $\mu$ almost everywhere finiteness of the quantity in \eqref{e:betaprectifiable}. Under the condition that $\Theta^{n,*}(\mu,x) < \infty$, H{\"o}lder's inequality guarantees that if $p > 2$ then   \eqref{e:betaprectifiable} implies  \eqref{e:beta2rectifiable}, so only the range $1 \le p < 2$ was of particular interest.

\subsection{Uniform Rectifiability and integral Menger curvatures}
In 1995 Melnikov discovered a beautiful identity connecting Menger curvature to the analytic capacity in the plane, which kicked off a great deal of research to find curvature-based techniques to classify rectifiable and uniformly rectifiable sets, and further to find relations to singular integrals (\cite{melnikov1995analytic}, \cite{melnikov1995geometric}, \cite{mattila1996cauchy}, \cite{mattila1998rectifiability}, \cite{leger1999menger}).

Unfortunately, all of the progress made applied exclusively to dimension $1$ and/or codimension $1$ sets. In fact, in \cite{farag1999riesz}, Farag showed that in the higher-(co)dimension case there does not exist an algebraic generalization of Melnikov's identity that could directly relate to Riesz kernels. 
In particular, this result does not preclude the usefulness of Menger-type curvatures whose integrand take a different form.

After a hiatus in progress on this subject, Lerman and Whitehouse showed that a geometrically motivated generalization of Menger curvature was sufficient to characterize uniformly $n$-rectifiable measures in higher codimension (in fact, their results hold for real separable Hilbert spaces). The work, completed in \cite{lerman2009high} and \cite{lerman2011high} was motivated by the original work of David and Semmes \cite{david1991singular} and \cite{david1993analysis}, but required a nontrivial and creative idea of ``geometric multipoles". They defined several versions of Menger curvatures of $(n+2)$-points in $\R^{m}$ (see \cite{lerman2009high}, \cite{lerman2011high}, and \cite{lerman2012least}). Among these curvatures, one that is well suited for our setting is
\begin{equation} \label{e:LWcurv}
\cK_{1}^{2}(x_{0}, \dots, x_{n+1}) = \frac{ \cH^{n+1}(\Delta(x_{0},\dots,x_{n+1}))^{2}}{\diam\{x_{0}, \dots, x_{n+1}\}^{(n+1)(n+2)}}
\end{equation}
where $\Delta(x_{0}, \dots, x_{n+1})$ is the simplex with corners $\{x_{0}, \dots, x_{n+1}\}$. Lerman and Whitehouse showed that control of integrals of the integrand $\cK_{1}$ has interesting geometric consequence, this is explained in more detail below.

One necessary object to understand and state the results of Lerman and Whitehouse, is a notion of the ``space of well-scaled simplices". More precisely, for some $0 < \lambda < 1$ and letting $X = (x_{0}, \dots, x_{n+1})$ denote an $(n+2)$-tuple in $\R^{m}$, define 
\begin{equation} \label{e:wlb}
W_{\lambda}(B(x,r)) = \{ X \in B(x,r)^{n+2} : \frac{ \min(X)}{\diam(X)} \ge \lambda > 0 \}.
\end{equation}
Where  
$$
\min(X) = \min_{0 \le i < j \le n+1} |x_{i} - x_{j}|.
$$

Then, one can think of $W_{\lambda}(B(x,r))$ as the space of well-scaled simplices in $B(x,r)$. We also define the quantities
\begin{equation} \label{e:LWintcurv}
c_{1}^{2}(\mu|_{B(x,r)}) = \int_{B(x,r)^{n+2}} \cK^{2}_{1}(X) \dif \mu^{n+2}(X).
\end{equation}
and
\begin{equation} \label{e:LWintcurv-holes}
c_{1}^{2}(\mu|_{B(x,r)},\lambda) = \int_{W_{\lambda}(B(x,r))} \cK_{1}^{2}(X) \dif \mu^{n+2}(X)
\end{equation} 
The measure $\mu^{n+2}$ is the measure on $(\R^{m})^{n+2}$ resulting from taking $(n+2)$-products of $\mu$ with itself.

The quantity in  \eqref{e:LWintcurv} can be thought of as the integral Menger curvature of the measure $\mu \restr B(x,r)$. The quantity in  \eqref{e:LWintcurv-holes} can be thought of as the amount of integral Menger curvature of the measure $\mu \restr B(x,r)$ which arises from ``well-scaled simplices". The main results of \cite{lerman2009high} and \cite{lerman2011high} are combined to show the following theorem, which notably holds in the more difficult setting of real-seperable Hilbert spaces.

\begin{theorem}[\cite{lerman2009high}, \cite{lerman2011high}]\label{t:LW}
If $\mu$ is an $n$-Ahlfors regular measure on a possibly infinite dimensional, real, separable Hilbert space, then the following are true.

\begin{equation*} \label{e:curvbelow}c_{1}^{2}( \mu|_{B(x,R)}) \le C_{2} \int_{B(x,R)} \int_{0}^{12R} \beta_{\mu;2}^{n}(y,r)^{2} \frac{ \dif r}{r} \dif \mu(y) 
\end{equation*}

and there exists a $\lambda \in (0,1)$ such that for all $B(x,R)$ with $2R \le \diam ( \spt \mu)$,
\begin{equation*} \label{e:curvabove}
\int_{B(x,R)} \int_{0}^{2R} \beta_{\mu;2}^{n}(y,r)^{2} \frac{ \dif r}{r} \dif \mu(y) \le C_{3} \cdot c_{1}^{2}( \mu |_{3 \cdot B(x,R)}, \lambda/2) 
\end{equation*}
Here, both $C_{2}$ and $C_{3}$ depend only on $n$ and the Ahlfors regularity constants of $\mu$.
\end{theorem}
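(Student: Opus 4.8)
The plan is to prove the two inequalities separately: the upper bound on the curvature follows the strategy of \cite{lerman2009high} and is the more routine half, while the lower bound on the $\beta$-integral follows \cite{lerman2011high} and is where the restriction to well-scaled simplices is essential.

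\emph{The curvature bound.} I would start from the elementary linear-algebra estimate that for any $(n+2)$-tuple $X=(x_0,\dots,x_{n+1})$ and any affine $n$-plane $\ell$,
\[
\cH^{n+1}(\Delta(X)) \le C_n\,\diam(X)^{n}\sum_{j=0}^{n+1}\dist(x_j,\ell),
\]
which one sees by writing the $(n+1)$-volume of the simplex as a Gram--Schmidt product of $n+1$ successive heights: each is at most $\diam(X)$, and the last one vanishes as soon as all vertices lie on a common $n$-plane, so it is controlled by $\sum_j\dist(x_j,\ell)$. Substituting into \eqref{e:LWcurv} yields the pointwise bound $\cK_1^2(X)\le C_n\,\diam(X)^{-(n^2+n+2)}\sum_j\dist(x_j,\ell)^2$, valid for every $\ell$. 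I would then split $B(x,R)^{n+2}$ into the dyadic pieces $\{X:\diam(X)\approx\rho\}$; on such a piece all $n+2$ vertices lie in $B(x_1,C\rho)$, so I choose $\ell=\ell(x_1,C\rho)$ a near-minimizer in \eqref{e:betadef} for $\beta_{\mu;2}^n(x_1,C\rho)$, integrate out the $n$ ``free'' vertices $x_2,\dots,x_{n+1}$ using the upper Ahlfors bound ($\mu(B(x_1,C\rho))^n\lesssim\rho^{n^2}$), and use symmetry in $j$. What is left is $\rho^{-(n+2)}\int\bigl(\int_{B(x_1,C\rho)}\dist(\cdot,\ell)^2\,d\mu\bigr)d\mu(x_1)\approx\int_{B(x,R)}\beta_{\mu;2}^n(x_1,C\rho)^2\,d\mu(x_1)$; summing the geometric series over dyadic $\rho$ and using the standard near-monotonicity $\beta_{\mu;2}^n(y,r)^2\lesssim\beta_{\mu;2}^n(y,2r)^2$ to pass from $\sum_k\beta_{\mu;2}^n(y,2^{-k}R)^2$ to $\int_0^{12R}\beta_{\mu;2}^n(y,r)^2\frac{dr}{r}$ closes this direction.

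\emph{The $\beta$-integral bound.} Here I would fix $y\in\spt\mu$ and $r\le 2R$ and dominate $\beta_{\mu;2}^n(y,r)^2$ by an average of $\cK_1^2$ over $W_{\lambda/2}(B(y,r))$. The mechanism is a converse of the estimate above: if one can find $n+1$ points $z_0,\dots,z_n\in\spt\mu\cap B(y,r)$ that are $\lambda$-well-spread (pairwise distances comparable to $r$), then $\ell_0=\aff(z_0,\dots,z_n)$ is an admissible competitor for $\beta_{\mu;2}^n(y,r)$, and for every further point $w$ the height $\dist(w,\ell_0)$ equals, up to a constant depending only on $n$ and $\lambda$, the ratio $\cH^{n+1}(\Delta(z_0,\dots,z_n,w))/\cH^n(\Delta(z_0,\dots,z_n))\approx r^{(n^2+n+2)/2}\,\cK_1(z_0,\dots,z_n,w)$. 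Squaring, integrating $w$ against $\mu|_{B(y,r)}$ restricted to $w$ far from each $z_i$ (so that the resulting tuple is well-scaled, and automatically of diameter comparable to $r$), and then averaging over the $\lambda$-well-spread $(n+1)$-tuples --- whose $\mu^{n+1}$-measure is $\approx r^{n(n+1)}$ by Ahlfors regularity --- converts $r^n\beta_{\mu;2}^n(y,r)^2$ into $\int_{W_{\lambda'}(B(y,r))}\cK_1^2\,d\mu^{n+2}$ for a slightly smaller $\lambda'$, the dilation absorbing the non-optimality of the selected configuration. Summing over $y\in B(x,R)$ and dyadic $r\le 2R$ and swapping the order of integration then produces $c_1^2(\mu|_{3B(x,R)},\lambda/2)$, since each well-scaled tuple of diameter $\approx\rho$ is charged only at the $O(1)$ scales $\approx\rho$ and at centers within $O(\rho)$ of it, giving a convergent packing sum. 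The genuinely hard point is that $\lambda$-well-spread $(n+1)$-tuples need not exist in every ball --- $\mu$ can momentarily concentrate near a lower-dimensional set --- so one must organize $\spt\mu$ by a stopping-time/corona decomposition into families of balls carrying good multipoles, estimate the $\beta$-contribution of each family by the curvature as above, and control the total stopping mass by a Carleson packing estimate (again from Ahlfors regularity, with $2R\le\diam(\spt\mu)$ guaranteeing room at the top scale).

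The main obstacle is precisely this corona construction in the second direction. The two pointwise geometric facts --- the upper bound on $\cH^{n+1}(\Delta)$ and its converse for well-spread tuples --- are elementary; the substantial part of \cite{lerman2011high}, and the reason the theorem is spread over two papers, is the combinatorial bookkeeping that identifies the scales and locations at which no well-scaled multipole detects the flatness of $\mu$ and shows their total contribution is Carleson-summable, which is the role of Lerman and Whitehouse's ``geometric multipoles''.
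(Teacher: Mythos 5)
The paper does not prove Theorem~\ref{t:LW}; it is quoted as a known result from \cite{lerman2009high} and \cite{lerman2011high} and used as a black box (most visibly in the derivations of Theorems~\ref{t:LWchar} and~\ref{t:i1}(a)--(e)). There is therefore no in-paper proof to compare against, and any evaluation must be against the cited sources rather than against this manuscript.

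That said, your sketch of the first inequality is essentially correct and self-contained at the level of detail you give. The simplex-volume estimate $\cH^{n+1}(\Delta(X))\le C_n\,\diam(X)^n\sum_j\dist(x_j,\ell)$ is an elementary cofactor-expansion fact (project to a hyperplane inside $\aff(X)$ containing an $n$-plane parallel to $\ell$; exactly one factor in the determinant expansion carries the normal component), and the exponent bookkeeping $\rho^{-(n^2+n+2)}\cdot\rho^{n^2}\cdot\rho^{n+2}=1$ closes the dyadic sum as you state. The one step you gloss over is the contribution of the ``anchor'' vertex $x_1$ itself: $\dist(x_1,\ell)$ need not be small for the minimizing affine plane $\ell$ in \eqref{e:betadef}, so one has to either use a Chebyshev-type argument to relocate the anchor to a nearby point close to $\ell$, or work with the centered $\hat\beta$ and pay the comparison in Lemma~\ref{l:samebeta}. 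That is a routine fix, not a gap in the idea.

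For the second inequality your outline captures the right mechanism (well-spread $(n+1)$-tuples span a competitor plane, and heights of a further point against that plane reproduce $\cK_1$ up to the $\diam$-powers), and you correctly identify that the existence of such tuples in every ball is exactly what can fail and why the restriction $W_{\lambda/2}$ appears. I would caution, however, that describing the remaining work as ``a stopping-time/corona decomposition'' may not match what Lerman and Whitehouse actually do in \cite{lerman2011high}; their argument is organized around ``geometric multipoles'' and a multiscale averaging/sampling scheme rather than a David--Semmes-style corona. You flag this direction as the genuinely hard part and as the content of \cite{lerman2011high}, which is accurate, but a reader should not come away thinking a standard corona construction is all that is missing. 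As a sketch of the cited result your proposal is a reasonable summary; it is not, and does not claim to be, a proof.
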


In particular, combining the two parts of Theorem \ref{t:LW} with the observation that $c_{1}^{2}(\mu|_{B(x,R)},\lambda) \le c_{1}^{2}(\mu|_{B(x,R)})$, it follows that 
$$
\displaystyle{c_{1}^{2}(\mu|_{B(x,R)}) \le C R^{n} \iff \int_{B(x,R)} \int_{0}^{R} \beta_{\mu;2}^{n}(x,r) \frac{ \dif r}{r} \le \tilde{C} R^{n}},
$$ 
which by passing through Theorem \ref{t:betauniformlyrectifiable} leads to the following characterization.

\begin{theorem}[\cite{david1991singular}, \cite{lerman2009high}, \cite{lerman2011high}] \label{t:LWchar}
If $n \ge 2$ and $\mu$ is an $n$-Ahlfors regular measure on $\R^{m}$ then the following are equivalent: \\
\noindent 1) There exists a constant $C$ independent of $x$ and $R$ so that $c_{1}^{2}(\mu|_{B(x,R)}) \le C R^{n}$ for all $x \in \spt \mu$ and $R > 0$. \\
\noindent 2) $\mu$ is uniformly $n$-rectifiable.
\end{theorem}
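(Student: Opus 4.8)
\emph{Proof proposal.} The plan is to read off the theorem from the two results already assembled: the David--Semmes characterization of uniform $n$-rectifiability by a Carleson condition on the $\beta_{2}$-numbers (Theorem~\ref{t:betauniformlyrectifiable}), and the two-sided comparison between the integral Menger curvature $c_{1}^{2}$ and the Carleson integral of $\beta_{\mu;2}^{n}(\cdot,\cdot)^{2}$ (Theorem~\ref{t:LW}). The only input beyond quoting these is the trivial monotonicity $c_{1}^{2}(\mu|_{B(x,r)},\lambda)\le c_{1}^{2}(\mu|_{B(x,r)})$, which holds because restricting the domain of integration from $B(x,r)^{n+2}$ to $W_{\lambda}(B(x,r))$ only decreases a nonnegative integrand, together with bookkeeping on the dilation factors $12,3,2$ that appear in Theorem~\ref{t:LW}; these cost only constants depending on $n$ and the Ahlfors regularity of $\mu$. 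The hypothesis $n\ge2$ is used precisely to ensure that $p=2$ lies in the admissible range $1\le p<\tfrac{2n}{n-2}$ of Theorem~\ref{t:betauniformlyrectifiable}.

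To prove $(2)\Rightarrow(1)$, suppose $\mu$ is uniformly $n$-rectifiable. By the $p=2$ instance of Theorem~\ref{t:betauniformlyrectifiable} there is $c>0$ with $\int_{B(x,\rho)}\int_{0}^{\rho}\beta_{\mu;2}^{n}(y,r)^{2}\,\tfrac{\dif r}{r}\,\dif\mu(y)\le c\,\rho^{n}$ for every $x\in\spt\mu$ and $\rho>0$. Fix $x\in\spt\mu$ and $R>0$. Applying the first estimate of Theorem~\ref{t:LW}, enlarging the domain of the outer integral from $B(x,R)$ to $B(x,12R)$, and then invoking the Carleson bound at scale $12R$ gives
\[
c_{1}^{2}(\mu|_{B(x,R)})\le C_{2}\int_{B(x,12R)}\int_{0}^{12R}\beta_{\mu;2}^{n}(y,r)^{2}\,\frac{\dif r}{r}\,\dif\mu(y)\le C_{2}\,c\,(12R)^{n}=C\,R^{n},
\]
which is exactly statement $(1)$.

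To prove $(1)\Rightarrow(2)$, suppose $c_{1}^{2}(\mu|_{B(x,R)})\le C\,R^{n}$ for all $x\in\spt\mu$ and $R>0$, and let $\lambda\in(0,1)$ be the parameter supplied by the second estimate of Theorem~\ref{t:LW}. For every ball $B(x,R)$ with $x\in\spt\mu$ and $2R\le\diam(\spt\mu)$, that estimate together with the monotonicity noted above and the hypothesis applied at $x$ with radius $3R$ yields
\[
\int_{B(x,R)}\int_{0}^{2R}\beta_{\mu;2}^{n}(y,r)^{2}\,\frac{\dif r}{r}\,\dif\mu(y)\le C_{3}\,c_{1}^{2}(\mu|_{B(x,3R)},\lambda/2)\le C_{3}\,c_{1}^{2}(\mu|_{B(x,3R)})\le C_{3}\,C\,(3R)^{n}.
\]
Since $\int_{0}^{R}\le\int_{0}^{2R}$, this is the Carleson condition \eqref{e:betapcarleson} with $p=2$ for all such balls; the remaining range $2R>\diam(\spt\mu)$ follows by the standard reduction, covering $\spt\mu$ by a bounded-overlap family of balls of radius comparable to $\diam(\spt\mu)$ (finitely many, by $n$-Ahlfors regularity) and estimating the contribution of scales $r\gtrsim\diam(\spt\mu)$ directly. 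Theorem~\ref{t:betauniformlyrectifiable} then gives that $\mu$ is uniformly $n$-rectifiable.

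The argument has no genuine obstacle once Theorems~\ref{t:LW} and~\ref{t:betauniformlyrectifiable} are in hand, since all the mathematical content resides in those cited results; the only points deserving a moment's care are the dilation bookkeeping and the treatment of the large-$R$ range, both routine consequences of $n$-Ahlfors regularity.
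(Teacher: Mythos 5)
Your argument is correct and is precisely the one the paper gives (the paragraph immediately preceding Theorem~\ref{t:LWchar}): combine the two estimates of Theorem~\ref{t:LW} with the monotonicity $c_{1}^{2}(\mu|_{B},\lambda)\le c_{1}^{2}(\mu|_{B})$ to get the two-sided comparison between the local Menger curvature and the $\beta_{2}$ Carleson integral, then pass through the David--Semmes characterization (Theorem~\ref{t:betauniformlyrectifiable}) with $p=2$, where $n\ge 2$ guarantees $2<\tfrac{2n}{n-2}$. You supply a bit more of the routine bookkeeping (the dilation factors and the $2R>\diam(\spt\mu)$ regime) than the paper bothers to record, but there is no difference in substance.
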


\subsection{Rectifiability and pointwise Menger curvature}

In 2015, Meurer found a general class of Menger-type curvatures which satisfy a one-sided comparison to $\beta_{p}$-coefficients and proved a sufficient condition for rectifiability of a set based off these integral menger curvatures. The integrand $\cK_{1}$ of Lerman and Whitehouse is an example of the general class of Menger-type integrands laid out in \cite{meurer2015fintegral} (the shortened published version is \cite{meurer2018integral}). So, throughout the remainder this section, on a first read one should interpret the phrase $(\mu,p)$-proper integrand, to mean the integrand $\cK_{1}$. The formal definition of a ``$(\mu,p)$-proper integrand" is given in Definition \ref{d:propint}. A symmetric $(\mu,p)$-proper integrand is defined in Definition \ref{d:sympropint}. A review of notation is also included in the preliminaries in section \ref{s:notation}.

One main result of \cite{meurer2015fintegral} is as follows,
\begin{theorem}[\cite{meurer2015fintegral}] \label{t:meurerRect}
Let $E \subset \R^{m}$ be a Borel set and $\mu = \cH^{n} \restr E$. If $\cK$ is a $(\mu,2)$-proper integrand such that
$$
\cM_{\cK^{2}}(\mu) \defeq \int_{\R^{m}} \dots \int_{\R^{m}} \cK^{2}(x_{0}, \dots, x_{n+1}) \dif \mu^{n+2}(x_{0}, \dots, x_{n+1}) < \infty,
$$
then $E$ is countably $n$-rectifiable.
\end{theorem}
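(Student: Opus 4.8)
\textbf{Reduction.} The plan is to argue by contradiction via the Besicovitch decomposition and a L\'eger--Tolsa type ``big pieces of Lipschitz graphs'' lemma, with the lower bound built into the notion of a $(\mu,2)$-proper integrand (Definition~\ref{d:propint}) playing the role that Menger curvature of triples plays in the planar theory of~\cite{leger1999menger}. A countably $n$-rectifiable set carries a $\sigma$-finite $\cH^{n}$-measure, so we may assume $\cH^{n}\restr E$ is $\sigma$-finite, and the structure theorem splits $E=E_{r}\cup E_{u}$ with $E_{r}$ countably $n$-rectifiable and $E_{u}$ purely $n$-unrectifiable. Since $\cK^{2}\ge 0$, restricting a measure does not increase its curvature, so $\cM_{\cK^{2}}(\cH^{n}\restr E_{u})\le\cM_{\cK^{2}}(\mu)<\infty$. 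If $E$ were not countably $n$-rectifiable then $\cH^{n}(E_{u})>0$; by inner regularity pick a compact $F\subset E_{u}$ with $\cH^{n}(F)>0$, set $\mu_{F}\defeq\cH^{n}\restr F$, and recall that Besicovitch's density theorem gives $2^{-n}\le\Theta^{n,*}(\mu_{F},x)\le 1$ for $\mu_{F}$-a.e.\ $x$; shrinking $F$ once more we may also assume a uniform upper bound $\mu_{F}(B(x,r))\le C_{0}r^{n}$ for all $x,r$. It then suffices to produce a Lipschitz $n$-graph $\Gamma$ with $\mu_{F}(\Gamma)>0$, contradicting the fact that a purely $n$-unrectifiable set meets every Lipschitz $n$-graph in an $\cH^{n}$-null set.

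\textbf{Curvature to flatness, and the main lemma.} The tool is the $(\mu,2)$-proper lower bound: if $y_{0},\dots,y_{n}\in B(x,r)$ span a $\lambda$-well-scaled $n$-simplex with affine hull $L$ and $y\in B(x,r)$, then $\cK^{2}(y_{0},\dots,y_{n},y)\gtrsim(\dist(y,L)/r)^{2}\,r^{-n(n+1)}$, the power of $r$ being the one forced by the homogeneity of $\cK_{1}$ in \eqref{e:LWcurv}. Anchoring $y_{0}=x$, integrating in $y\in B(x,r)$ and averaging over admissible $(y_{1},\dots,y_{n})$ --- discarding the near-coplanar choices, which form a controllably small fraction once one observes, via the elementary bound $\beta_{\mu_{F};2}^{n}(x,r)^{2}\le\mu_{F}(B(x,r))/r^{n}$, that any scale at which $\beta_{\mu_{F};2}^{n}(x,r)$ is appreciable already carries enough transversal mass --- gives a one-sided estimate of $\beta_{\mu_{F};2}^{n}(x,r)$ by the curvature of the $(n+2)$-tuples sitting near $B(x,r)$, up to factors of $\mu_{F}(B(x,r))$. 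With this in hand I would run a stopping-time (corona) decomposition of the dyadic cubes meeting $F$: starting from the top cube and one of its near-optimal planes, descend, stopping at a cube $Q$ where flatness fails or the density ratio $\mu_{F}(Q)/\ell(Q)^{n}$ drops below a threshold; on the unstopped tree the approximating planes turn slowly, so the corresponding part of $F$ lies on a single Lipschitz graph, while the total $\mu_{F}$-mass of the stopping cubes is charged against the curvature sum. To make that curvature sum small in the first place one localizes: rescale $\mu_{F}$ restricted to a suitable small ball about a density point to the unit ball.

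\textbf{The main obstacle.} The difficulty is precisely that $\mu_{F}=\cH^{n}\restr F$ possesses only an upper Ahlfors bound and no lower one, so the clean two-sided comparison of Theorem~\ref{t:LW} is unavailable: the one-sided estimate above comes with factors $\mu_{F}(B(x,r))$ that genuinely degenerate at some scales, and a crude scale-by-scale use of it yields only $\beta_{\mu_{F};2}^{n}(x,r)^{q}\lesssim g_{r}(x)$ with some fixed $q>2$, where $g_{r}(x)$ is the part of the (a.e.\ finite) pointwise curvature $\int_{(\R^{m})^{n+1}}\cK^{2}(x,y_{1},\dots,y_{n+1})\,\dif\mu^{n+1}(y_{1},\dots,y_{n+1})$ carried by tuples at distance $\approx r$ from $x$; since $\int_{0}^{1}g_{r}(x)^{2/q}\,\tfrac{\dif r}{r}$ can diverge while $\int_{0}^{1}g_{r}(x)\,\tfrac{\dif r}{r}<\infty$, this is not enough. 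The mass lost to low-density and to non-flat cubes must instead be charged against the curvature in a telescoping fashion through the stopping-time construction, and it is here that the specific structure $\mu_{F}=\cH^{n}\restr F$ --- which forces density $\approx 1$ to recur at many scales at $\mu_{F}$-a.e.\ point --- must be exploited to make both the localization and the telescoping go through. (Alternatively one can bypass the main lemma, prove the pointwise Carleson bound $\int_{0}^{1}\beta_{\mu;2}^{n}(x,r)^{2}\,\tfrac{\dif r}{r}<\infty$ for $\mu$-a.e.\ $x$, and invoke Theorem~\ref{t:ENV}, whose density hypotheses $0<\Theta^{n,*}$, $\Theta^{n}_{*}<\infty$ hold automatically for $\cH^{n}\restr E$; but that bound meets exactly the same obstruction and is presumably proved by the same argument.)
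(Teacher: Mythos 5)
The approach you sketch and the one in the paper share a common start (structure theorem into rectifiable/unrectifiable pieces, inner regularity and Besicovitch to get bounded upper density, reduce to producing a Lipschitz graph that charges a purely unrectifiable piece), but then diverge sharply, and your version has a genuine gap you yourself flag.

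The key difference is that you attempt to reconstruct the Lipschitz-graph-construction result from scratch: you pass from the $(\mu,2)$-proper lower bound to a $\beta$-number estimate and then propose a corona/stopping-time argument to build the graph. That entire mechanism is precisely the content of Theorem~\ref{t:maingraph} (Meurer's Theorem~4.1, itself a heavy L\'eger-type result), and the paper treats it as a black box rather than re-proving it. What the paper actually has to supply is only the \emph{localization} step, Lemma~\ref{l:53}: exploit $\cM_{\cK^2}(\mu)<\infty$ plus dominated convergence to show that the $\tau$-local curvature $I(\tau)=\int_{A(\tau)}\cK^{p}\,\dif\nu^{n+2}$ tends to zero, take a fine Vitali cover of $\spt\nu$ by disjoint balls with $\nu(B_i)\gtrsim(\diam B_i)^n$, and run a good/bad dichotomy on those balls (comparing $\sum_i\cM_{\cK^p}(\nu\restr B_i)\le I(2\tau_0)$ against $\sum_i(\diam B_i)^n\gtrsim\nu(\R^m)$) to locate at least one ball whose restricted curvature is as small as one wishes \emph{relative to $(\diam B_i)^n$}. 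Inner regularity then yields a compact $E^*\subset B_i$ with lower mass, upper Ahlfors regularity, and small normalized curvature; the scaling identity of Proposition~\ref{p:MCTM} turns this into a measure satisfying the hypotheses (A)--(C) of Theorem~\ref{t:maingraph}, and the resulting graph contradicts pure unrectifiability.

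Your sentence ``To make that curvature sum small in the first place one localizes: rescale $\mu_F$ restricted to a suitable small ball about a density point to the unit ball'' is exactly where your sketch breaks down: merely choosing a small ball at a density point does \emph{not} guarantee that the restricted curvature is $\le\eta\,r^n$. The dichotomy argument via $I(\tau)\to 0$ and Vitali is the missing ingredient that converts global finiteness of $\cM_{\cK^2}$ into the smallness required to invoke the graph theorem, and it also obviates the entire stopping-time construction you try to set up in your middle paragraph. Your final paragraph then candidly notes that your $\beta$-number route hits the same obstruction; the resolution is not to charge mass against curvature in a telescoping corona but to localize first, rescale via Proposition~\ref{p:MCTM}, and invoke Theorem~\ref{t:maingraph} directly. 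In short: correct overall skeleton, but the decisive mechanism (Lemma~\ref{l:53} plus black-box use of Theorem~\ref{t:maingraph}) is replaced by a vague ``rescale near a density point'' plus an ambitious and unfinished corona construction, so the proof as proposed does not close.
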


\begin{remark}
Meurer also showed that $\cM_{\cK^{2}}(\cH^{n} \restr E) < \infty$ implies $\cH^{n} \restr E$ is locally finite. A similar result does not hold for Borel measures $\mu$ who are absolutely continuous with respect to the Hausdorff measure. For instance, consider $\mu$ on $\R^{2}$ defined by 
$$
\displaystyle \mu = \frac{1}{|x_{1}|} \cH^{1} \restr \{ x_{2} = 0\}
$$ where $x \in \R^{2}$ is written as $x = (x_{1}, x_{2})$. Then $\spt \mu = \{ x_{2} = 0 \}$ implies $\cK_{1}(y_{0}, y_{1},y_{2}) = 0$ whenever $y_{i} \in \spt(\mu)$ for all $i =1,2,3$. Consequently, $\cM_{\cK_{1}}(\mu) = 0$. Nonetheless, $\displaystyle \mu(B(0, \delta)) = \int_{- \delta}^{\delta} \frac{\dif x_{1}}{|x_{1}|} = \infty$ for all $\delta > 0$.
\end{remark}

The next theorem is an interesting intermediate result of \cite{meurer2015fintegral} which is of similar style to the work of Lerman and Whitehouse. As such, one must again have the correct interpretation of the ``space of well-scaled simplices", namely
\begin{equation}
\cO_{k}(x,t) \defeq \{ (x_{0}, \dots, x_{n+1}) \in B(x,kt)^{n+2} \mid |x_{i} - x_{j}| \ge \frac{t}{k} ~ \forall i \neq j \}.
\end{equation} 

Then, adopting the notation
$$
\cM_{\cK^{p};k}(x,t) \defeq \int_{\cO_{k}(x,t)} \cK^{p}(x_{0}, \dots, x_{n+1}) \dif \mu^{n+2}(x_{0}, \dots, x_{n+1})
$$
one can succinctly state:

\begin{theorem}[\cite{meurer2015fintegral}] \label{t:meurerBeta}
Let $\mu$ be an $n$-upper Ahlfors regular Borel measure, with upper-regularity constant $C_{0}$ (see Definition \ref{d:ADreg}). Let $0 < \lambda < 2^{n}$ and $k > 2, k_{0} \ge 1$. Then there exist constants 
\begin{equation*}
\text{$k_{1} = k_{1}(m,n,C_{0}, k,k_{0}, \lambda) > 1$ and $C = C(m,n,\cK,p,C_{0},k,k_{0},\lambda) \ge 1$}
\end{equation*} such that if $\displaystyle{ \mu(B(x,t)) \ge \lambda t^{n}} $, then for every $y \in B(x, k_{0}t)$ we have
$$
\beta_{\mu;p}^{n}(y, kt)^{p} \le C \frac{ \cM_{\cK^{p};k_{1}}(x,t)}{t^{n}} \le C \frac{ \cM_{\cK^{p}, k_{1} + k_{0}}(y,t)}{t^{n}}.
$$
\end{theorem}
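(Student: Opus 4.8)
The plan is to transfer the flatness quantity $\beta^{n}_{\mu;p}(y,kt)$ into an integral of $\cK^{p}$ over well-scaled simplices, using the lower density bound to manufacture enough fat reference simplices, the properness of $\cK$ (Definition \ref{d:propint}) to compare distances-to-planes with $\cK^{p}$, and then Fubini together with an averaging over these simplices — in the spirit of Lerman–Whitehouse (Theorem \ref{t:LW}). As a first reduction I would prove instead $\beta^{n}_{\mu;p}(x,Kt)^{p}\le C\,\cM_{\cK^{p};k_{1}}(x,t)/t^{n}$ for $K\defeq k+k_{0}+2$: since $B(y,kt)\subseteq B(x,Kt)$ whenever $y\in B(x,k_{0}t)$, monotonicity of the infimum defining $\beta$ gives $\beta^{n}_{\mu;p}(y,kt)^{p}\le (K/k)^{n+p}\beta^{n}_{\mu;p}(x,Kt)^{p}$. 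The second inequality of the theorem is immediate: $y\in B(x,k_{0}t)$ forces $B(x,k_{1}t)\subseteq B(y,(k_{1}+k_{0})t)$ and $t/k_{1}\ge t/(k_{1}+k_{0})$, hence $\cO_{k_{1}}(x,t)\subseteq \cO_{k_{1}+k_{0}}(y,t)$ and $\cM_{\cK^{p};k_{1}}(x,t)\le\cM_{\cK^{p};k_{1}+k_{0}}(y,t)$ because $\cK^{p}\ge 0$.

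Next I would produce a well-distributed family of heavy balls. From $\mu(B(x,t))\ge\lambda t^{n}$ and $n$-upper Ahlfors regularity (Definition \ref{d:ADreg}), which limits how much of the mass in $B(x,t)$ can sit in a thin neighbourhood of a plane of dimension $\le n-1$, a standard iterative selection produces points $p_{0},\dots,p_{n+1}\in B(x,t)\cap\spt\mu$ and constants $\rho,c_{1},l,\delta>0$ depending only on $n,\lambda,C_{0}$ such that the balls $B_{i}\defeq B(p_{i},\rho t)$ are pairwise $\ge\delta t$ apart, satisfy $\mu(B_{i})\ge c_{1}t^{n}$, and have the property that any $n+1$ of them span an $l$-fat $n$-simplex. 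I would fix $\rho$ small and postpone the (large) choice of $k_{1}=k_{1}(m,n,C_{0},k,k_{0},\lambda)$. Letting $L^{*}$ be a near-minimizing $n$-plane for $\beta^{*}\defeq\beta^{n}_{\mu;p}(x,Kt)$ (finite, by the crude bound $(\beta^{*})^{p}(Kt)^{n+p}\le (2Kt)^{p}C_{0}(Kt)^{n}$), the estimate $\int_{B_{i}}\dist(z,L^{*})^{p}\,d\mu(z)\le (\beta^{*})^{p}(Kt)^{n+p}$ together with Chebyshev lets me pass to the sub-balls $B_{i}^{r}\defeq B_{i}\cap\{\dist(\cdot,L^{*})\le C_{4}\beta^{*}t\}$ with $\mu(B_{i}^{r})\ge c_{1}t^{n}/2$ for a suitable $C_{4}$; restricting simplex vertices to the $B_{i}^{r}$ then keeps the affine hull of any of our fat simplices within $C_{5}\beta^{*}t$ of $L^{*}$ on $B(x,Kt)$.

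For the main estimate, note that for any $\bar v=(\bar v_{0},\dots,\bar v_{n+1})\in\prod_{i}B_{i}^{r}$ one has $(\beta^{*})^{p}(Kt)^{n+p}\le\int_{B(x,Kt)}\dist(z,\aff\{\bar v_{0},\dots,\bar v_{n}\})^{p}\,d\mu(z)$, and I would average this over $\bar v$. For fixed $\bar v$ I split the $z$-integral. If $z$ is at distance $\ge t/k_{1}$ from each $\bar v_{0},\dots,\bar v_{n}$ then $(\bar v_{0},\dots,\bar v_{n},z)\in\cO_{k_{1}}(x,t)$ and properness yields $\dist(z,\aff\{\bar v_{0},\dots,\bar v_{n}\})^{p}\le c_{K}^{-1}(2k_{1}t)^{p+n(n+1)}\cK^{p}(\bar v_{0},\dots,\bar v_{n},z)$. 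If instead $z$ lies within $t/k_{1}$ of some $\bar v_{i_{0}}$ with $i_{0}\le n$ (at most one such $i_{0}$ exists), I use the ``spare'' ball: the $l$-fat simplex $V_{i_{0}}$ built from $\{B_{0},\dots,B_{n+1}\}\setminus\{B_{i_{0}}\}$ has all vertices $\ge\delta t/4$ from $z$, so $(V_{i_{0}},z)\in\cO_{k_{1}}(x,t)$, properness applies to it, and since $\aff\{\bar v_{0},\dots,\bar v_{n}\}$ and $\aff V_{i_{0}}$ differ by at most $2C_{5}\beta^{*}t$ on $B(x,Kt)$ one gets $\dist(z,\aff\{\bar v_{0},\dots,\bar v_{n}\})^{p}\le 2^{p-1}\!\left(c_{K}^{-1}(2k_{1}t)^{p+n(n+1)}\cK^{p}(V_{i_{0}},z)+(2C_{5}\beta^{*}t)^{p}\right)$. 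Integrating in $z$ and averaging in $\bar v$, every curvature term lives on a subset of $\cO_{k_{1}}(x,t)$ with one dummy coordinate, hence is $\le c_{K}^{-1}(2k_{1}t)^{p+n(n+1)}\cM_{\cK^{p};k_{1}}(x,t)$ divided by a product of $n+1$ of the $\mu(B_{i}^{r})$'s, while the remaining terms contribute at most $2^{p-1}(2C_{5})^{p}(n+1)C_{0}(\beta^{*}t)^{p}(t/k_{1})^{n}$ by $\sup_{v}\mu(B(v,t/k_{1}))\le C_{0}(t/k_{1})^{n}$. This gives
\[
(\beta^{*})^{p}(Kt)^{n+p}\le C_{7}\,t^{p}\,\cM_{\cK^{p};k_{1}}(x,t)+\frac{C_{8}}{k_{1}^{\,n}}\,(\beta^{*})^{p}\,t^{\,n+p},
\]
with $C_{7},C_{8}$ depending only on $m,n,\cK,p,C_{0},k,k_{0},\lambda$; choosing $k_{1}$ so large that $C_{8}k_{1}^{-n}\le\tfrac12 K^{n+p}$ (along with the finitely many earlier largeness conditions, e.g. $k_{1}\ge K$ and $k_{1}\ge 4/\delta$) and absorbing the last term, legitimate since $(\beta^{*})^{p}<\infty$, yields $(\beta^{*})^{p}\le(2C_{7}/K^{n+p})\,\cM_{\cK^{p};k_{1}}(x,t)/t^{n}$, which with the first reduction proves the theorem.

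I expect the main obstacle to be exactly the ``bad'' test points lying near a simplex vertex: the crude bound $\dist(z,\aff\{\bar v\})^{p}\le (t/k_{1})^{p}$ there would only produce $\beta^{p}\le C\cM/t^{n}+\mathrm{const}$, which is vacuous when the curvature is small, so one is forced into the spare-ball/skip-simplex device together with the self-improvement (absorption) step — this is what makes the extra ball $B_{n+1}$, the restriction to the $B_{i}^{r}$, and the freedom to enlarge $k_{1}$ unavoidable. The remaining difficulty is purely bookkeeping: the parameter choices ($\rho$ small, then $C_{4}$ large, then $k_{1}$ large, with $K$ fixed) must be arranged in a consistent order, and the dimensional exponent in the properness lower bound must be the one compatible with the normalization $\cM_{\cK^{p};k_{1}}(x,t)\sim t^{n}$.
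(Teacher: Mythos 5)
The paper does not prove Theorem~\ref{t:meurerBeta}; it is quoted verbatim from Meurer's work~\cite{meurer2015fintegral}, so there is no ``paper's own proof'' to compare against line by line. The closest in-paper relative is the new Lemma~\ref{t:likeNV} together with the proof of Theorem~\ref{t:mainBeta}: there the authors also manufacture a family of heavy, well-separated balls spanning fat simplices from the density lower bound, apply the properness condition~\eqref{e:p303}, and average. The crucial structural difference is that Theorem~\ref{t:mainBeta} bounds the \emph{pointwise} curvature $\curv^{n}_{\cK^{2};\mu}(x,R)$, which integrates over all of $B(x,r)^{n+1}$ with one vertex pinned at $x$ and with no minimum-separation constraint; consequently the degenerate $z$ (close to a vertex) never need to be removed, and no spare ball or absorption step appears. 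Meurer's statement instead uses $\cM_{\cK^{p};k_{1}}(x,t)$, which is restricted to $\cO_{k_{1}}(x,t)$, and that restriction is exactly what forces the two extra devices you invoke: the $(n+2)$-nd spare ball to build the replacement simplex $V_{i_{0}}$ when $z$ is too close to a vertex, and the self-improvement step to absorb the $(\beta^{*})^{p}\,k_{1}^{-n}$ term. Your identification of that obstacle and of the spare-ball/absorption remedy is exactly right and is, to the best of my knowledge, how Meurer's own argument (in the spirit of L\'eger and Lerman--Whitehouse) proceeds; the easy reductions (centering the $\beta$-number at $x$ via $B(y,kt)\subset B(x,(k+k_{0})t)$, and $\cO_{k_{1}}(x,t)\subset\cO_{k_{1}+k_{0}}(y,t)$) are also correct.

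One point deserves a flag because as written it would endanger the absorption. When you apply properness you write $\dist(z,\aff\{\bar v_{0},\dots,\bar v_{n}\})^{p}\le c_{K}^{-1}(2k_{1}t)^{p+n(n+1)}\cK^{p}(\cdot)$, apparently taking the parameter $C$ in~\eqref{e:p303} to be $\sim k_{1}$. That is wasteful and misleading: the fatness of $\Delta(\bar v_{0},\dots,\bar v_{n})$ is $\gtrsim lt$ with $l=l(n,\lambda,C_{0})$, and both the simplex and $z$ sit in $B(x,Kt)$, so one may take $C=\max(K,2/l)$, which is \emph{independent of $k_{1}$}. The constant $k_{1}$ enters only through two largeness conditions (so that the $\bar v_{j}$'s and $z$ lie in $B(x,k_{1}t)$ and have pairwise gaps $\ge t/k_{1}$) and through the measure bound $\mu(B(\bar v_{i_{0}},t/k_{1}))\le C_{0}(t/k_{1})^{n}$, which supplies the decaying factor $k_{1}^{-n}$ in front of $(\beta^{*})^{p}$. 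As it happens your two curvature contributions (good and bad $z$) can tolerate a $k_{1}$-dependent prefactor, since they feed into $C_{7}$ and $k_{1}$ is ultimately fixed in terms of $(m,n,C_{0},k,k_{0},\lambda)$; but it is cleaner, and safer against accidental back-feeding into $C_{8}$, to use the $k_{1}$-independent properness constant throughout. With that adjustment your scheme is sound, and it differs from the paper's Lemma~\ref{t:likeNV}-based argument only in the ways forced by the differing statements.
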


A corollary of the previous result, whose relevance to the results of Lerman and Whitehouse is more immediate can be stated as
\begin{corollary}[\cite{meurer2015fintegral}] \label{c:mBeta}
Let $\mu$ be an $n$-upper Ahlfors regular Borel measure on $\R^{m}$ with upper-regularity constant $C_{0}$ . Fix $0 < \lambda < 2^{n}, k > 2, k_{0} \ge 1$ and $\cK^{p}$ any symmetric $(\mu,p)$-proper integrand. Then there exists a constant $C = C(m,n,\cK,p,C_{0}, k, k_{0}, \lambda)$ such that
$$
\int_{\R^{m}} \int_{0}^{\infty} \beta_{\mu;p}^{n}(x,t)^{p} \eye_{\{ \tilde{\delta}_{k}(B(x,t)) \ge \lambda\}} \frac{ \dif  t}{t} \dif \mu(x) \le C \cM_{\cK^{p}}(\mu),
$$
where
$$
\tilde{\delta}_{k}(x,kt) = \sup_{y \in B(x,kt)} \frac{ \mu(B(y,t))}{t^{n}}.
$$
\end{corollary}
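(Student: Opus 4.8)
The plan is to obtain the claimed Carleson estimate by summing Meurer's pointwise bound (Theorem~\ref{t:meurerBeta}) over all admissible pairs $(x,t)$ and then reversing the order of integration with Tonelli's theorem, the resulting overlap being absorbed by the $n$-upper Ahlfors regularity of $\mu$. Throughout I write $\rho(X) \defeq \diam\{x_{0},\dots,x_{n+1}\}$ for a tuple $X = (x_{0},\dots,x_{n+1}) \in (\R^{m})^{n+2}$, and I use $\cM_{\cK^{p}}(\mu) = \int_{(\R^{m})^{n+2}} \cK^{p}(X)\,\dif\mu^{n+2}(X)$ as in Theorem~\ref{t:meurerRect}.

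First I would unwind the density indicator. If $\tilde{\delta}_{k}(B(x,t)) \ge \lambda$ then, since $y \mapsto \mu(B(y,t/k))$ is lower semicontinuous, there is a point $y_{x,t} \in \overline{B(x,t)}$ with $\mu(B(y_{x,t}, t/k)) \ge \lambda (t/k)^{n}$, and a standard measurable selection makes $(x,t) \mapsto y_{x,t}$ Borel on $\{\tilde{\delta}_{k}(B(x,t)) \ge \lambda\}$. Applying Theorem~\ref{t:meurerBeta} to the ball $B(y_{x,t}, t/k)$ — with the parameter ``$k_{0}$'' there taken to be $2k$, so that the base point $x \in B(y_{x,t}, t) \subset B(y_{x,t}, 2k\cdot\tfrac{t}{k})$ is admissible — produces constants $k_{1} = k_{1}(m,n,C_{0},k,\lambda) > 1$ and $C_{1} = C_{1}(m,n,\cK,p,C_{0},k,\lambda)$ with
\[
\beta_{\mu;p}^{n}(x,t)^{p} \;=\; \beta_{\mu;p}^{n}\!\Big(x,\, k\cdot\tfrac{t}{k}\Big)^{p} \;\le\; C_{1}\,\frac{\cM_{\cK^{p};k_{1}}\big(y_{x,t},\,\tfrac{t}{k}\big)}{(t/k)^{n}}
\]
(that the corollary's constant may depend on its fixed $k_{0}$ is immaterial here). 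Integrating this against $\eye_{\{\tilde{\delta}_{k}(B(x,t)) \ge \lambda\}}\,\tfrac{\dif t}{t}\,\dif\mu(x)$, substituting $t = ks$ in the inner integral, expanding $\cM_{\cK^{p};k_{1}}(y,s) = \int_{\cO_{k_{1}}(y,s)} \cK^{p}\,\dif\mu^{n+2}$, dropping the harmless factor $\eye_{\{\tilde\delta_{k} \ge \lambda\}} \le 1$, and interchanging the order of integration (Tonelli), the left-hand side of the corollary is bounded by
\[
C_{1} \int_{(\R^{m})^{n+2}} \cK^{p}(X)\, N(X)\,\dif\mu^{n+2}(X), \qquad N(X) \defeq \int_{\R^{m}}\int_{0}^{\infty} \eye_{\{\,X \in \cO_{k_{1}}(y_{x,ks},\,s)\,\}}\,\frac{\dif s}{s^{\,n+1}}\,\dif\mu(x),
\]
so it suffices to show $N(X) \le C_{2}$ uniformly in $X$ for some $C_{2} = C_{2}(m,n,C_{0},k,\lambda)$; then the displayed quantity is at most $C_{1}C_{2}\,\cM_{\cK^{p}}(\mu)$.

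The bound on $N(X)$ is the heart of the matter, and the step I expect to require the most care. Fix $X = (x_{0},\dots,x_{n+1})$ and suppose $X \in \cO_{k_{1}}(y_{x,ks},s)$. The separation condition $|x_{i}-x_{j}| \ge s/k_{1}$ for all $i \ne j$ forces $s \le k_{1}\min(X) \le k_{1}\rho(X)$, while the containment of all $x_{i}$ in $B(y_{x,ks},k_{1}s)$ forces $\rho(X) \le 2k_{1}s$; hence $s$ is confined to the multiplicative window $[\rho(X)/(2k_{1}),\, k_{1}\rho(X)]$. Moreover $y_{x,ks} \in \overline{B(x,ks)}$ and $x_{0} \in B(y_{x,ks},k_{1}s)$ give $|x-x_{0}| \le (k+k_{1})s \le (k+k_{1})k_{1}\,\rho(X)$, so $x$ lies in a ball of radius $(k+k_{1})k_{1}\rho(X)$ about $x_{0}$. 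Therefore
\[
N(X) \;\le\; \int_{B\left(x_{0},\,(k+k_{1})k_{1}\rho(X)\right)} \int_{\rho(X)/(2k_{1})}^{k_{1}\rho(X)} \frac{\dif s}{s^{\,n+1}}\,\dif\mu(x) \;\le\; \mu\!\Big(B\big(x_{0},\,(k+k_{1})k_{1}\rho(X)\big)\Big)\cdot\frac{(2k_{1})^{n}}{n\,\rho(X)^{n}},
\]
and since $x_{0} \in \spt\mu$ for $\mu^{n+2}$-a.e.\ $X$, $n$-upper Ahlfors regularity bounds the measure of that ball by $C_{0}\big((k+k_{1})k_{1}\big)^{n}\rho(X)^{n}$, giving $N(X) \le \tfrac{1}{n}C_{0}(2k_{1})^{n}\big((k+k_{1})k_{1}\big)^{n} =: C_{2}$, independent of $X$. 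The indicator in $N(X)$ vanishes unless $\rho(X) > 0$ (membership in $\cO_{k_{1}}$ forces strict pairwise separation), so degenerate tuples — in particular atoms of $\mu$ — cause no trouble. Assembling the pieces yields the corollary with $C = C_{1}C_{2}$, depending only on $m,n,\cK,p,C_{0},k,\lambda$ and hence on the listed parameters.

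Apart from the (routine) measurable selection of $y_{x,t}$, the only genuine work is the geometric bookkeeping above: verifying that membership of a fixed tuple $X$ in the ``space of well-scaled simplices'' $\cO_{k_{1}}(y_{x,ks},s)$ simultaneously restricts the scale $s$ to a bounded multiplicative range around $\diam(X)$ and confines the base point $x$ to a ball of radius comparable to $\diam(X)$ about $X$. I expect this localize-and-sum step — standard in Carleson-measure arguments, but the place where the upper regularity hypothesis is actually used — to be the main obstacle; once it is in place, the elementary integral $\int s^{-n-1}\,\dif s$ finishes the argument.
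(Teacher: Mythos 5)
The paper does not supply a proof of this corollary --- it is cited directly from \cite{meurer2015fintegral} --- so there is no in-text argument to compare against, and I can only assess your proposal on its own terms. Your route (integrate the pointwise bound of Theorem~\ref{t:meurerBeta} against $\eye\,\dif t/t\,\dif\mu(x)$, apply Tonelli, and bound the resulting Carleson multiplier $N(X)$ uniformly via upper Ahlfors regularity) is the natural one and is, in substance, correct. The geometric localization --- membership of a fixed $X$ in the well-scaled family $\cO_{k_1}(\cdot,s)$ confines $s$ to a bounded multiplicative window around $\diam(X)$ and confines the outer variable $x$ to a comparable ball about $x_0$ --- is exactly the right mechanism, and upper regularity and the elementary integral $\int s^{-n-1}\,\dif s$ are used precisely where they must be. Two small repairs are needed, neither of which affects the strategy.

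First, lower semicontinuity of $y \mapsto \mu(B(y,t/k))$ gives attainment of the \emph{infimum} on a compact set, not the supremum, so it does not furnish the maximiser $y_{x,t}$ you claim. No attainment is needed: the condition $\tilde{\delta}_{k}(B(x,t)) \ge \lambda$ directly supplies, for any fixed $\epsilon \in (0,\lambda)$, some $y_{x,t} \in B(x,t)$ with $\mu(B(y_{x,t},t/k)) \ge (\lambda-\epsilon)(t/k)^n$; taking $\epsilon = \lambda/2$ and running Theorem~\ref{t:meurerBeta} with density parameter $\lambda/2 < 2^n$ costs only a harmless constant.

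Second, the measurable-selection discussion can be avoided altogether, which is presumably why the second displayed inequality in Theorem~\ref{t:meurerBeta} is recorded at all: it already re-centers the Menger integral at the roaming point. Applying that inequality with the theorem's $x$ replaced by $y_{x,t}$, its $y$ replaced by your $x$, its $t$ replaced by $t/k$, and $k_0 = 2k$ yields
\begin{equation*}
\beta_{\mu;p}^{n}(x,t)^p \le C_1\,\frac{\cM_{\cK^p;\,k_1+2k}(x,\,t/k)}{(t/k)^n},
\end{equation*}
with no auxiliary point on the right-hand side; the integrand of $N(X)$ then depends only on $(x,s)$, so no selection theorem is required. The localization bound becomes: $X \in \cO_{k_1+2k}(x,s)$ forces $\diam(X)/(2(k_1+2k)) \le s \le (k_1+2k)\diam(X)$ and $|x-x_0| \le (k_1+2k)s$, and the same computation closes the estimate with a constant depending only on $m,n,\cK,p,C_0,k,\lambda$, as required. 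With these adjustments your argument is complete.
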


\begin{remark}
Although it seemingly goes unmentioned in \cite{meurer2015fintegral}, we note that this corollary implies the following statement:

If $\mu$ is an $n$-Ahlfors regular measure on $\R^{m}$, and $\cM_{\cK^{p}}(\mu|_{B(x,r)}) \lesssim r^{n}$ with suppressed constant independent of $x$, for all $x \in \spt \mu$ and all $0 < r < \diam \spt (\mu)$ for some symmetric $(\mu,p)$-proper integrand, and $p \in  [2, \frac{ 2n}{n-2})$, then $\mu$ is uniformly $n$-rectifiable.

This follows directly from Corollary \ref{c:mBeta} and Theorem \ref{t:betauniformlyrectifiable}. 
\end{remark}

{ In \cite[Equation 10.1]{lerman2011high} Lerman and Whitehouse also introduce the curvature}
$$
\cK^{2}_{2}(x_{0}, \dots, x_{n+1}) = \frac{ h_{\min}(x_{0}, \dots, x_{n+1})^{2}}{\diam(\{x_{0}, \dots, x_{n+1}\})^{n(n+1)+2}},
$$
where 
\begin{equation} \label{e:hmin}
h_{\min}(x_{0}, \dots, x_{n+1}) = \min_{i} \dist(x_{i}, \aff \{x_{0}, \dots, x_{i-1}, x_{i+1}, \dots, x_{n+1}\})
\end{equation}
and $\aff \{y_{0}, \dots, y_{k}\}$ denotes the smallest affine plane containing $\{y_{0}, \dots, y_{k}\}$.

Notably, $\cK_{2}$ satisfies 
$$
\displaystyle \cK^{2}_{1}(x_{0}, \dots, x_{n+1}) \le \cK^{2}_{2}(x_{0}, \dots, x_{n+1})  
$$ 
for all $(x_{0},\dots, x_{n+1}) \in (\R^{m})^{n+2}$ and is also a $(\mu,2)$-proper integrand. We define the pointwise Menger curvature of $\mu$ with respect to a $(\mu,p)$-proper integrand $\cK$ at $x$ and scale $r$ by
\begin{equation} \label{e:pmc}
\curv_{\cK^{p};\mu}^{n}(x,r) = \int_{(B(x,r))^{n+1}} \cK^{p}(x, x_{1}, \dots, x_{n+1}) \dif \mu^{n+1}(x_{1}, \dots, x_{n+1}).
\end{equation}

Then a simplified (and strictly weaker) version of the main result of \cite{kolasinski2016estimating} is:

\begin{lemma} \cite{kolasinski2016estimating} \label{l:kol16}
Let $\mu$ be a Borel measure on $\R^{m}$. Then, there exists $\Gamma = \Gamma(n,m)$ such that
$$
\curv_{\cK^{2}_{2};\mu}^{n}(x,R) \le \Gamma \int_{0}^{2R} \Theta^{n}(\mu,x,r)^{n} \hat{\beta}_{\mu,2}^{n}(x,r)^{2} \frac{ \dif r}{r},
$$

where
$$
\Theta^{n}(\mu,x,r) = \frac{ \mu(B(x,r))}{ r^{n}}
$$
and
\begin{equation} \label{e:cbeta}
\hat{\beta}^{n}_{\mu,p}(x,r)^{p} \defeq \inf_{L \ni x} \frac{1}{r^{n}} \int_{B(x,r)} \left( \frac{ \dist(y, L)}{r} \right)^{p} \dif \mu(y)
\end{equation}
denotes the ``centered" $\beta_{p}$-numbers. Notably, the infimum is taken over all $n$-planes passing through the center of $B(x,r)$. 
\end{lemma}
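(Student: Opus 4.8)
The plan is to reduce the estimate to the elementary fact that for any points $x_{0},\dots,x_{n+1}\in\R^{m}$ and \emph{any} $n$-dimensional affine plane $L$,
\[
h_{\min}(x_{0},\dots,x_{n+1})\ \le\ (n+2)\,\max_{0\le i\le n+1}\dist(x_{i},L),
\]
and then to integrate this over scales. To prove the inequality, let $\pi$ be the orthogonal projection onto $L$; the $n+2$ points $\pi(x_{0}),\dots,\pi(x_{n+1})$ lie in an affine space of dimension $n$, hence are affinely dependent, so there are scalars $\lambda_{i}$, not all zero, with $\sum_{i}\lambda_{i}=0$ and $\sum_{i}\lambda_{i}\pi(x_{i})=0$. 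Taking $k$ with $|\lambda_{k}|=\max_{i}|\lambda_{i}|$ and $\mu_{i}:=-\lambda_{i}/\lambda_{k}$ for $i\ne k$, one has $\sum_{i\ne k}\mu_{i}=1$, $|\mu_{i}|\le1$, and $\pi(x_{k})=\sum_{i\ne k}\mu_{i}\pi(x_{i})$; since $\sum_{i\ne k}\mu_{i}x_{i}\in\aff\{x_{i}:i\ne k\}$, the triangle inequality gives
\[
\dist\bigl(x_{k},\aff\{x_{i}:i\ne k\}\bigr)\le|x_{k}-\pi(x_{k})|+\sum_{i\ne k}|\mu_{i}|\,|x_{i}-\pi(x_{i})|\le(n+2)\max_{i}\dist(x_{i},L),
\]
and $h_{\min}$ does not exceed this single term.

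\textbf{Step 2: peel off the scale.} Given $(x_{1},\dots,x_{n+1})\in B(x,R)^{n+1}$, set $\rho=\max_{1\le i\le n+1}|x_{i}-x|$. Since $\diam\{x,x_{1},\dots,x_{n+1}\}\ge\rho$, we get $\cK^{2}_{2}(x,x_{1},\dots,x_{n+1})\le h_{\min}^{2}\,\rho^{-(n(n+1)+2)}$, and $\rho^{-(n(n+1)+2)}$ equals a constant depending only on $n$ times $\int_{\rho}^{2\rho}s^{-(n(n+1)+2)}\frac{\dif s}{s}$, an integral supported in $(0,2R)$ because $\rho<R$. Inserting this into $\curv_{\cK^{2}_{2};\mu}^{n}(x,R)$ and using Tonelli rewrites the curvature as a constant times $\int_{0}^{2R}s^{-(n(n+1)+2)}\bigl(\int h_{\min}^{2}\,\dif\mu^{n+1}\bigr)\frac{\dif s}{s}$, where for each $s$ the inner integral runs over tuples with $\rho\le s$, in particular with all $x_{i}\in B(x,s)$.

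\textbf{Step 3: insert the $\beta$-number and match exponents.} For fixed $s$ the inner integral is at most $\int_{B(x,s)^{n+1}}h_{\min}(x,x_{1},\dots,x_{n+1})^{2}\,\dif\mu^{n+1}$. Applying Step 1 with an arbitrary $n$-plane $L\ni x$ (so $\dist(x,L)=0$) gives $h_{\min}^{2}\le(n+2)^{2}\sum_{i=1}^{n+1}\dist(x_{i},L)^{2}$; integrating out the $n+1$ variables turns the right-hand side into $(n+2)^{2}(n+1)\,\mu(B(x,s))^{n}\int_{B(x,s)}\dist(y,L)^{2}\,\dif\mu(y)$. As this holds for every $L\ni x$, taking the infimum over such $L$ \emph{after} integrating — which side-steps any measurability question about minimizing planes — replaces the last factor by $s^{n+2}\hat\beta_{\mu,2}^{n}(x,s)^{2}$. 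Substituting back and using $n(n+1)+2=n^{2}+n+2$, the powers of $s$ collapse to $s^{-(n(n+1)+2)}\mu(B(x,s))^{n}s^{n+2}=\Theta^{n}(\mu,x,s)^{n}$, which is exactly the asserted inequality with $\Gamma$ an explicit function of $n$.

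The only genuinely new ingredient is Step 1: the height singled out by the affine dependence of the $n+2$ projected points is already controlled by the best-fit $n$-plane, so the minimum over $i$ in the definition of $h_{\min}$ costs nothing — this is where care is needed, since for a fixed $i$ the distance $\dist(x_{i},\aff\{x_{j}:j\ne i\})$ need \emph{not} be comparable to $\max_j\dist(x_{j},L)$ when the remaining points are nearly degenerate. Everything afterwards is routine bookkeeping; the two small points of care are to parametrize by $\rho=\max_{i}|x_{i}-x|$ rather than the full diameter (so the scale integral lands in $(0,2R)$ and not $(0,4R)$) and to pass the infimum defining $\hat\beta$ outside the integral over the auxiliary variables.
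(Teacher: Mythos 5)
Your proof is correct. Note that the paper does not prove this lemma itself — it is quoted as a "simplified (and strictly weaker) version of the main result of \cite{kolasinski2016estimating}" and used as a black box — so there is no in-paper argument to compare against. Your argument is a clean self-contained derivation and, as far as the structure goes, it is essentially the natural one: the geometric heart is your Step 1, which bounds $h_{\min}$ by $(n+2)\max_i \dist(x_i,L)$ using the affine dependence of the $n+2$ orthogonal projections onto the $n$-plane $L$ (this is the step where the minimum over vertices in $h_{\min}$ is crucial — the analogous bound for a fixed vertex height would be false). Steps 2 and 3 (convert $\rho^{-(n(n+1)+2)}$ into a dyadic $\dif s/s$ integral via Tonelli, then for each scale $s$ integrate out the auxiliary variables against a fixed $L\ni x$ before infimizing over $L$) are correct bookkeeping; the exponents cancel exactly as you claim since $s^{-(n(n+1)+2)}\,\mu(B(x,s))^n\,s^{n+2}=\Theta^n(\mu,x,s)^n$, and the resulting $\Gamma$ depends only on $n$, which is consistent with the stated $\Gamma(n,m)$. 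Two small points you handled correctly and are worth keeping: taking the infimum over $L$ only after integrating avoids any measurable-selection issue, and parametrizing by $\rho=\max_i|x_i-x|\le\diam\le 2\rho$ is what lands the $s$-integral in $(0,2R)$ rather than a larger interval.
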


Another useful result which can be found in a more general setting in \cite{kolasinski2016estimating} is
\begin{lemma} \cite{kolasinski2016estimating} \label{l:samebeta}

Let $\mu$ be a Radon measure on $\R^{m}$ and $x$ be such that $0 < \Theta^{n}_{*}(\mu,x) \le \Theta^{n,*}(\mu,x) < \infty$. Then, for $p \in [1, \infty]$ and $0 < \rho < \infty$, 
\begin{equation} \label{e:k1}
\int_{0}^{\rho} \hat{\beta}^{n}_{\mu;p}(x,r)^{p} \frac{\dif r}{r} < \infty \quad \mu- a.e.~ x \in \R^{m}
\end{equation}
if and only if
\begin{equation} \label{e:k2}
\int_{0}^{\rho} \beta^{n}_{\mu;p}(x,r)^{p} \frac{\dif r}{r} < \infty \quad \mu - a.e. ~ x \in \R^{m}.
\end{equation}

Moreover, if $\mu$ is $n$-Ahlfors regular, and $q \le p$ then there exists $C$ depending on $m,n,p,q$ and the Ahlfors regularity constants such that
\begin{equation} \label{e:k3}
\int_{B(x,r)} \int_{0}^{r} \beta^{n}_{\mu;p}(x,r)^{q} \frac{ \dif r}{r} \le \int_{B(x,r)} \int_{0}^{r} \hat{\beta}^{n}_{\mu;p}(x,r)^{q} \frac{ \dif r}{r} \le C \int_{B(x,C r)} \int_{0}^{C r} \beta_{\mu;p}^{n}(x,r)^{q} \frac{ \dif r}{r}.
\end{equation}
\end{lemma}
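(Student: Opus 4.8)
The plan is to reduce the whole lemma to a single scale-localized comparison of $\hat\beta$ with $\beta$ at comparable scales, and to prove that comparison by a telescoping (plane-drift) argument. Write $\beta(x,r)=\beta^{n}_{\mu;p}(x,r)$ and $\hat\beta(x,r)=\hat\beta^{n}_{\mu;p}(x,r)$. One direction is free: since the infimum defining $\hat\beta$ in \eqref{e:cbeta} runs over the smaller family of $n$-planes through the centre of $B(x,r)$, one always has $\beta(x,r)\le\hat\beta(x,r)$, which is exactly the left-hand inequality in \eqref{e:k3} and gives \eqref{e:k1}$\Rightarrow$\eqref{e:k2}. Two elementary remarks set up the rest: taking $L\ni x$ and using $\dist(y,L)\le r$ for $y\in B(x,r)$ gives $\hat\beta(x,r)^{p}\le\Theta^{n}(\mu,x,r)$, so under an upper density bound $\hat\beta$ and $\beta$ are bounded; and $r\mapsto\inf_{L}\int_{B(x,r)}\dist(y,L)^{p}\,d\mu$ is nondecreasing, so $\beta(x,\cdot)^{p}r^{n+p}$ and $\hat\beta(x,\cdot)^{p}r^{n+p}$ are nondecreasing, whence the dyadic sums $\sum_{j}\beta(x,2^{-j}\rho)^{q}$ and $\sum_{j}\hat\beta(x,2^{-j}\rho)^{q}$ are comparable to $\int_{0}^{\rho}(\cdot)^{q}\tfrac{dt}{t}$ up to dimensional constants. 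So it suffices to work with dyadic scales $r_{j}=2^{-j}\rho$ and prove $\sum_{j}\hat\beta(x,r_{j})^{q}\lesssim\sum_{k}\beta(x,r_{k})^{q}$ at the given $x$ (with constants depending only on $m,n,p,q$ and the density bounds, hence uniform in the AD-regular case).

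The key estimate is the following: with $L_{j}$ a near-minimiser for $\beta(x,r_{j})$ and $d_{j}:=\dist(x,L_{j})$,
\[
\hat\beta(x,r_{j})^{p}\ \lesssim\ \beta(x,r_{j})^{p}+\Big(\tfrac{d_{j}}{r_{j}}\Big)^{p},
\qquad
\tfrac{d_{j}}{r_{j}}\ \lesssim\ \sum_{k\ge j}2^{-(k-j)}\,\beta(x,r_{k}).
\]
The first inequality comes from testing $\hat\beta$ with the $n$-plane through $x$ parallel to $L_{j}$, using $|\dist(y,L_{j})-\dist(y,L_{j}^{\parallel})|\le d_{j}$ and the upper density bound. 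For the second, the naive per-scale bound is not enough: the lower density bound forces a positive $\mu$-fraction of $B(x,d_{j}/2)$ to lie at distance $\ge d_{j}/2$ from $L_{j}$, which only yields $d_{j}/r_{j}\lesssim\beta(x,r_{j})^{p/(p+n)}$ — an exponent too small to be summable against $\sum_{k}\beta(x,r_{k})^{p}$. Instead one telescopes $d_{j}\lesssim\sum_{k\ge j}\dist_{B(x,r_{k})}(L_{k},L_{k+1})$, valid because $d_{k}\lesssim r_{k}$ and $r_{k}\to0$ (so the planes $L_{k}$ converge onto $x$), and bounds each consecutive displacement by $\dist_{B(x,r_{k})}(L_{k},L_{k+1})\lesssim\beta(x,r_{k})r_{k}+\beta(x,r_{k+1})r_{k+1}$. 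Once these two estimates are in hand, discrete Young's inequality (convolution with the summable kernel $(2^{-i})_{i\ge0}$) gives $\sum_{j}(d_{j}/r_{j})^{q}\lesssim\sum_{k}\beta(x,r_{k})^{q}$ for every $q>0$, and combining with the first inequality finishes the dyadic bound; passing back to $\tfrac{dt}{t}$-integrals then yields \eqref{e:k2}$\Rightarrow$\eqref{e:k1} (take $q=p$) and, integrating in $x$ over $B(x_{0},R)$, the right-hand inequality of \eqref{e:k3} (the case where its right side is infinite being vacuous). The needed decay $\beta(x,r_{j})\to0$ holds whenever $\int_{0}^{\rho}\beta(x,t)^{p}\tfrac{dt}{t}<\infty$ — in particular for $\mu$-a.e.\ $x$ when the right side of \eqref{e:k3} is finite — and the finitely many scales with $\beta(x,r_{j})\gtrsim\varepsilon_{0}$ are absorbed trivially via $\hat\beta(x,r_{j})^{q}\le\Theta^{n}(\mu,x,r_{j})\lesssim\varepsilon_{0}^{-q}\beta(x,r_{j})^{q}$.

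The displacement bound $\dist_{B(x,r_{k})}(L_{k},L_{k+1})\lesssim\max(\beta(x,r_{k}),\beta(x,r_{k+1}))\,r_{k}$ is a stability statement I would prove as follows: two affine $n$-planes $L,L'$ that are both $L^{p}(\mu)$-good approximations of $\mu$ in a ball $B(x,s)$, where $\mu$ has comparable upper and lower density, must agree in $B(x,s)$ up to $\lesssim(\max\text{-error})\,s$. First, by Markov's inequality with a large but \emph{fixed} multiplicative constant (so that no power of $\beta$ is lost), a subset of $B(x,s/2)$ of $\mu$-measure $\gtrsim s^{n}$ has $\dist(\cdot,L),\dist(\cdot,L')\lesssim\beta s$ there; next, such a set cannot lie in the $\varepsilon_{1}s$-neighbourhood of any $(n-1)$-plane for a fixed $\varepsilon_{1}=\varepsilon_{1}(m,n,\text{densities})$ (cover that neighbourhood by $\lesssim\varepsilon_{1}^{-(n-1)}$ balls of radius $\varepsilon_{1}s$ and use the upper density bound, contradicting the lower bound), so it contains $n+1$ points spanning a simplex of $n$-volume $\gtrsim s^{n}$; finally, two $n$-planes passing $\lesssim\beta s$-close to such a non-degenerate configuration differ by $\lesssim\beta s$ throughout $B(x,s)$ by elementary linear algebra with condition number depending only on $\varepsilon_{1},m,n$. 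This no-loss stability estimate — getting the \emph{first} power of $\beta$ rather than $\beta^{1/2}$ or $\beta^{p/(p+n)}$ — is the step I expect to be the main obstacle; everything else (the dyadic-sum/integral passage, the radius bookkeeping $r$ vs.\ $2r$ vs.\ $Cr$, and the $p=\infty$ case, where $x\in\spt\mu$ already forces $d_{j}\lesssim r_{j}\,\beta_{\mu;\infty}(x,2r_{j})$ with no density input) is routine.
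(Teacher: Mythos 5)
The paper imports this lemma from \cite{kolasinski2016estimating} without supplying a proof, so there is no in-paper argument to compare against; what I can do is check your proof on its own merits, and it holds up. The structure is right: $\beta\le\hat\beta$ is free; testing $\hat\beta(x,r_j)$ against the translate through $x$ of a near-minimiser $L_j$ yields $\hat\beta(x,r_j)^p\lesssim\beta(x,r_j)^p+(d_j/r_j)^p$ once the extra factor $\Theta^n(\mu,x,r_j)$ is absorbed under the upper density bound; the telescoping $d_j\lesssim\sum_{k\ge j}\dist_{B(x,Cr_k)}(L_k,L_{k+1})$ is justified because $d_k\lesssim r_k\to0$, where $d_k\lesssim r_k$ itself uses the lower density bound to produce a mass-bearing point of $B(x,r_k)$ within $O(\beta_k r_k)$ of $L_k$; and your no-loss stability estimate (fixed-large-constant Markov, then a non-degeneracy argument ruling out concentration in a thin $(n-1)$-plane slab --- the same covering count that appears in the paper's own Lemma~\ref{t:likeNV} --- then linear algebra on a fat simplex) is correct and does indeed deliver the first power of $\beta$, which is the crux. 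Discrete Young closes the dyadic sum for $q\ge1$, and for $q\le1$ the subadditivity $(\sum a_i)^q\le\sum a_i^q$ is even simpler. Two small points worth making explicit: in the consecutive-scale stability step, first observe that $L_k$, near-optimal on $B(x,r_k)$, is also a $\lesssim\beta(x,r_k)$-good $L^p(\mu)$ approximation on the smaller ball $B(x,r_{k+1})$ (trivial scale comparison), so both planes may be tested on the same ball; and the absorption bound near the end should read $\hat\beta(x,r)^q\le\Theta^n(\mu,x,r)^{q/p}$ rather than $\Theta^n(\mu,x,r)$, which still suffices under the bounded-density hypothesis.
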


A consequence of Lemma \ref{l:kol16} and \cite{tolsa2015characterization} is:
\begin{theorem}[\cite{tolsa2015characterization}, \cite{kolasinski2016estimating}] \label{t:rect2finitecurv}
Let $\mu$ be a Radon measure on $\R^{m}$ with $0 < \Theta^{n}_{*}(\mu,x) \le \Theta^{n,*}(\mu,x) < \infty$ for $\mu$ almost every $x \in \R^{m}$. If $\mu$ is countably $n$-rectifiable, then $\curv^{n}_{\cK_{2}^{2};\mu }(x, 1) < \infty$ for $\mu$ almost every $x \in \R^{m}$.
\end{theorem}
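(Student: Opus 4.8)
The plan is to assemble the cited results: use the pointwise estimate of Lemma~\ref{l:kol16} to bound $\curv^{n}_{\cK_{2}^{2};\mu}(x,1)$ by a weighted integral of centered $\beta$-numbers, control the density weight using the Radon and upper-density hypotheses, use Lemma~\ref{l:samebeta} to replace centered by non-centered $\beta$-numbers, and finally apply Theorem~\ref{t:tolchar} (after localizing to reduce to a finite measure) to conclude the relevant $\beta$-sum is finite $\mu$-a.e.

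First I would fix the $\mu$-full set $G$ of points $x$ with $0 < \Theta^{n}_{*}(\mu,x) \le \Theta^{n,*}(\mu,x) < \infty$ and observe that for $x \in G$ the density $\Theta^{n}(\mu,x,r) = \mu(B(x,r))/r^{n}$ is bounded on the whole range $(0,2)$: the condition $\Theta^{n,*}(\mu,x) < \infty$ gives a bound on some punctured neighbourhood $(0,r_{0}(x))$ of the origin, while on $[r_{0}(x),2)$ one simply uses $\mu(B(x,r)) \le \mu(B(x,2)) < \infty$, since a Radon measure is finite on bounded sets. Hence $M(x) \defeq \sup_{0<r<2}\Theta^{n}(\mu,x,r)^{n} < \infty$ for every $x \in G$, and Lemma~\ref{l:kol16} with $R = 1$ gives
$$
\curv^{n}_{\cK_{2}^{2};\mu}(x,1) \le \Gamma\, M(x) \int_{0}^{2} \hat{\beta}^{n}_{\mu,2}(x,r)^{2}\,\frac{\dif r}{r}.
$$
So it suffices to prove that $\int_{0}^{2} \hat{\beta}^{n}_{\mu,2}(x,r)^{2}\,\dif r/r < \infty$ for $\mu$-a.e. $x$.

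By Lemma~\ref{l:samebeta} applied with $p = 2$ and $\rho = 2$ (whose hypotheses are exactly those assumed here), finiteness of $\int_{0}^{2}\hat{\beta}^{n}_{\mu;2}(x,r)^{2}\,\dif r/r$ for $\mu$-a.e. $x$ is equivalent to finiteness of $\int_{0}^{2}\beta^{n}_{\mu;2}(x,r)^{2}\,\dif r/r$ for $\mu$-a.e. $x$. To obtain the latter I would localize Theorem~\ref{t:tolchar}: for each integer $N \ge 3$ the measure $\mu \restr B(0,N)$ is a \emph{finite} Borel measure, still countably $n$-rectifiable, so Theorem~\ref{t:tolchar} yields $\int_{0}^{\infty}\beta^{n}_{\mu\restr B(0,N);2}(x,r)^{2}\,\dif r/r < \infty$ for $\mu$-a.e. $x \in B(0,N)$. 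If $x \in B(0,N-2)$ and $r < 2$ then $B(x,r) \subset B(0,N)$, so $\mu$ and $\mu\restr B(0,N)$ agree on $B(x,r)$ and therefore $\beta^{n}_{\mu;2}(x,r) = \beta^{n}_{\mu\restr B(0,N);2}(x,r)$; hence $\int_{0}^{2}\beta^{n}_{\mu;2}(x,r)^{2}\,\dif r/r < \infty$ for $\mu$-a.e. $x \in B(0,N-2)$. Letting $N \to \infty$ covers $\R^{m}$, and combining with the displayed inequality finishes the proof.

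I do not expect a single hard step here, since the statement is essentially a corollary of Lemmas~\ref{l:kol16} and~\ref{l:samebeta} together with Theorem~\ref{t:tolchar}; the only points that require genuine care are the two just indicated, namely (i) that the density weight $\Theta^{n}(\mu,x,r)^{n}$ appearing in Lemma~\ref{l:kol16} is under control all the way up to scale $2$ — handled by the Radon hypothesis and the finiteness of the upper density, which is why only the \emph{pointwise} a.e.\ finiteness (and not a Carleson-type bound) can be extracted — and (ii) the passage from the possibly infinite Radon measure $\mu$ to the finite measures demanded by Theorem~\ref{t:tolchar}, which is harmless precisely because $\beta$-numbers at scales $r < 2$ only see the restriction of $\mu$ to a bounded ball.
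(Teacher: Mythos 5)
Your argument is correct and matches the paper's one-line proof sketch: the paper simply asserts that the theorem follows from Lemma~\ref{l:kol16}, the equivalence of (i) and (ii) in Lemma~\ref{l:generalityofmain} (which is exactly your inline bound $M(x)<\infty$), Lemma~\ref{l:samebeta}, and Theorem~\ref{t:tolchar}. You have filled in precisely the details the paper leaves implicit, including the localization to $\mu\restr B(0,N)$ needed to apply Theorem~\ref{t:tolchar} to a possibly infinite Radon measure.
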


Indeed, the equivalence of (i) and (ii) in Lemma \ref{l:generalityofmain} ensures that Theorem \ref{t:rect2finitecurv} follows from Lemmata \ref{l:kol16}, \ref{l:samebeta}, and Theorem \ref{t:tolchar}.
%
%

\subsection{A new look at rectifiability via Menger curvature}
The first result of this paper is a generalization of Theorem \ref{t:meurerRect} to the case of Radon measures with upper-density bounded above and below.

\begin{theorem} \label{t:radonRect}
If $\mu$ is a Radon measure on $\R^{m}$ with $0 < \Theta^{n,*}(\mu,x) < \infty$ for $\mu$ almost every $x \in \R^{m}$ and $\cM_{\cK^{2}}(\mu) < \infty$ for some $(\mu,2)$-proper integrand $\cK$, then $\mu$ is countably $n$-rectifiable.
\end{theorem}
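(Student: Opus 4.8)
The plan is to deduce Theorem~\ref{t:radonRect} from Meurer's Theorem~\ref{t:meurerRect} by cutting $\mu$ into countably many Borel pieces, each of which is controlled above by a constant multiple of $\cH^{n}$ and on each of which $\cH^{n}$ is in turn controlled by a constant multiple of $\mu$, so that on each piece Theorem~\ref{t:meurerRect} can be applied to an honest restriction of $\cH^{n}$. The preliminary observation is that $\Theta^{n,*}(\mu,x) < \infty$ for $\mu$-a.e.\ $x$ already forces $\mu \ll \cH^{n}$: with $D_{M} = \{x : \limsup_{r\to 0^{+}}\mu(B(x,r))/r^{n} < M\}$ the classical density comparison (e.g.\ Mattila, \emph{Geometry of Sets and Measures}, Thm.~6.9) gives $\mu\restr D_{M} \le C(m,n)\,M\,\cH^{n}\restr D_{M}$, and $\mu(\rn\setminus\bigcup_{M}D_{M}) = 0$ then yields $\mu\ll\cH^{n}$.

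For $k\in\N$ set $E_{k} = \{x : \limsup_{r\to 0^{+}}\mu(B(x,r))/r^{n} \ge 1/k\}$, a Borel set. Membership in $E_{k}$ guarantees, for each such $x$, a sequence of scales $r\to 0$ with $\mu(B(x,r)) \gtrsim r^{n}/k$, which is exactly the input a Vitali-type covering argument needs in order to give the complementary density comparison $\cH^{n}\restr E_{k} \le C(m,n)\,k\,\mu$; in particular $\cH^{n}\restr E_{k}$ is finite on compact sets, hence a Radon measure of the form $\cH^{n}\restr(\text{Borel set})$. Since $\Theta^{n,*}(\mu,x) > 0$ $\mu$-a.e., the $E_{k}$ increase to a set $E$ with $\mu(\rn\setminus E) = 0$. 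Now fix $k$. From $\cH^{n}\restr E_{k} \le C(m,n)k\,\mu$ on $\rn$ we obtain $(\cH^{n}\restr E_{k})^{n+2} \le (C(m,n)k)^{n+2}\mu^{n+2}$ on $(\rn)^{n+2}$, so
\[
\cM_{\cK^{2}}(\cH^{n}\restr E_{k}) \;=\; \int \cK^{2}\,d(\cH^{n}\restr E_{k})^{n+2} \;\le\; (C(m,n)k)^{n+2}\,\cM_{\cK^{2}}(\mu) \;<\; \infty .
\]
As $\cK$ is $(\mu,2)$-proper it is also $(\cH^{n}\restr E_{k},2)$-proper --- the measure enters Definition~\ref{d:propint} only inessentially, as is transparent for $\cK_{1}$ and $\cK_{2}$ --- so Theorem~\ref{t:meurerRect} applies and $E_{k}$ is countably $n$-rectifiable. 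Since a countable union of countably $n$-rectifiable sets is again countably $n$-rectifiable, $E = \bigcup_{k}E_{k}$ is countably $n$-rectifiable; as $\mu(\rn\setminus E) = 0$ and $\mu\ll\cH^{n}$, the measure $\mu$ is countably $n$-rectifiable in the sense of Definition~\ref{d:cr}.

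The two density comparison inequalities, the passage to $(n+2)$-fold products, and the local finiteness of $\cH^{n}\restr E_{k}$ are all routine, so the write-up should be short; the only conceptual point is that the lower bound driving the decomposition is available precisely because $\Theta^{n,*}$ is a \emph{limsup}-density, costing nothing beyond the a.e.\ positivity hypothesis, and that is exactly the step where the generality of Radon measures is handled. I would note one alternative that is tempting but worse: after a parallel decomposition of $\mu$ into upper Ahlfors regular pieces $\mu_{i}$ (truncating by both a density bound and a ball) one can feed $\cM_{\cK^{2}}(\mu_{i}) \le \cM_{\cK^{2}}(\mu) < \infty$ into Corollary~\ref{c:mBeta} to bound $\int\int \beta_{\mu_{i};2}^{n}(x,t)^{2}\,\eye_{\{\tilde\delta_{k}(B(x,t))\ge\lambda\}}\,\frac{dt}{t}\,d\mu_{i}$ and then attempt to invoke Theorem~\ref{t:azzchar}; but removing the density cutoff $\eye_{\{\tilde\delta_{k}\ge\lambda\}}$ pointwise in the scale variable is delicate --- sending $\lambda\to 0$ destroys the constant --- so the reduction to Theorem~\ref{t:meurerRect} above is the cleaner route, and presumably the one taken in the paper.
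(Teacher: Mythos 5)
Your proof is correct, but it takes a genuinely different and substantially shorter route than the paper does. The paper proves Theorem~\ref{t:radonRect} via its contrapositive (Lemma~\ref{l:54}): it invokes the structure theorem (Lemma~\ref{l:RandURparts}) to split $\mu=\mu_{r}+\mu_{u}$, pinches the upper density of $\mu_{u}$ (Lemma~\ref{l:step0}), extracts a compact $E^{*}$ with two-sided density bounds and small Menger curvature (Lemma~\ref{l:53}), and after rescaling feeds it into Meurer's \emph{main lemma} (Theorem~\ref{t:maingraph}) to produce a Lipschitz graph carrying a definite fraction of $\mu_{u}$-mass, contradicting pure unrectifiability. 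In other words, the paper re-runs the L\'eger-style extraction essentially from scratch, using Meurer's \emph{tool} rather than his \emph{finished theorem}. You instead exploit the two classical density comparisons (your reference to Mattila Thm.~6.9 is the right one; the paper already records the direction $\Theta^{n,*}<\infty\Rightarrow\mu\ll\cH^{n}$ in Definition~\ref{d:densities}) to transfer the curvature bound from $\mu$ to $\cH^{n}\restr E_{k}$ and then cite Theorem~\ref{t:meurerRect} directly, afterwards summing over $k$. The two small points you flag in passing are genuine but benign: the domination $\cH^{n}\restr E_{k}\le C(m,n)k\,\mu$ does propagate to the $(n+2)$-fold product, and $(\mu,2)$-properness passes to $(\cH^{n}\restr E_{k},2)$-properness since the measure enters Definition~\ref{d:propint} only through $\mu^{n+2}$-measurability and $\cH^{n}\restr E_{k}\ll\mu$. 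Your reduction is cleaner and avoids all of Lemmata~\ref{l:step0}, \ref{l:53}, and the rescaling apparatus; the paper's route has the side benefit of developing that apparatus (Proposition~\ref{p:MCTM}, Theorem~\ref{t:s1}, Corollary~\ref{c:s1}), which is of some independent interest, but as a proof of Theorem~\ref{t:radonRect} alone your argument is the more efficient one. You are also right to reject the detour through Corollary~\ref{c:mBeta} and Theorem~\ref{t:azzchar}, for exactly the reason you give.
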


In light of Theorem \ref{t:radonRect}, with some additional work {we can now answer an open question posed in \cite[Section 6]{lerman2011high}} and characterize $n$-rectifiability with respect to Menger-type curvatures.

\begin{theorem} \label{t:maincharacterization}
Let $\mu$ be a Radon measure on $\R^{m}$ with $0 < \Theta^{n}_{*}(\mu,x) \le \Theta^{n,*}(\mu,x) < \infty$ for $\mu$ almost every $x \in \R^{m}$. Then the following are equivalent: \\
\noindent 1) $\mu$ is countably $n$-rectifiable. \\
\noindent 2) For $\mu$ almost every $x \in \R^{m}$,  $\curv^{n}_{\cK^{2};\mu}(x,1) < \infty$, where $\cK \in \{ \cK_{1}, \cK_{2} \}$. \\
\noindent 3) $\mu$ has $\sigma$-finite integral Menger curvature in the sense that $\mu$ can be written as $\mu = \sum_{j=1}^{\infty} \mu_{j}$ where each $\mu_{j}$ satisfies $\cM_{\cK^{2}}(\mu_{j}) < \infty$ where $\cK \in \{\cK_{1}, \cK_{2}\}$.
\end{theorem}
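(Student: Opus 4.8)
The plan is to establish the cycle $(1) \Rightarrow (2) \Rightarrow (3) \Rightarrow (1)$; the geometric content is entirely imported from Theorems \ref{t:rect2finitecurv} and \ref{t:radonRect}, and what remains is a localization-and-reassembly argument. For $(1) \Rightarrow (2)$: if $\mu$ is countably $n$-rectifiable then Theorem \ref{t:rect2finitecurv} gives $\curv^{n}_{\cK_{2}^{2};\mu}(x,1) < \infty$ for $\mu$-a.e.\ $x$, and since $\cK_{1}^{2} \le \cK_{2}^{2}$ pointwise we get $\curv^{n}_{\cK_{1}^{2};\mu}(x,1) \le \curv^{n}_{\cK_{2}^{2};\mu}(x,1) < \infty$ as well. (Here one first records that, by Tonelli's theorem applied to the Borel integrand $\cK^{2}$ on $(\R^{m})^{n+2}$ together with the finiteness of $\mu$ on bounded sets, $x \mapsto \curv^{n}_{\cK^{2};\mu}(x,1)$ is Borel measurable, so the phrase ``finite $\mu$-a.e.'' is meaningful and its sub-level sets are Borel.)

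For $(2) \Rightarrow (3)$, fix $\cK \in \{\cK_{1},\cK_{2}\}$ and, for $c>0$, let $A_{c} = \{x : \curv^{n}_{\cK^{2};\mu}(x,1) \le c\}$; by hypothesis $\mu(\R^{m} \setminus \bigcup_{j \in \N} A_{j}) = 0$. Intersecting the Borel sets $A_{j}\setminus A_{j-1}$ with $B(0,j)$ and with a fixed tiling of $\R^{m}$ by half-open cubes of diameter less than $1$, we obtain a countable Borel partition $\{B_{k}\}_{k \in \N}$ of a set of full $\mu$-measure such that each $B_{k}$ is bounded, $\diam B_{k} < 1$, and $B_{k} \subset A_{c_{k}}$ for some finite $c_{k}$. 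Set $\mu_{k} = \mu \restr B_{k}$. For $x \in B_{k}$ one has $B_{k}^{n+1} \subseteq B(x,1)^{n+1}$ and $\mu_{k}^{n+1} \le \mu^{n+1}$, hence
\[
\int_{B_{k}^{n+1}} \cK^{2}(x,x_{1},\dots,x_{n+1})\, \dif\mu_{k}^{n+1} \;\le\; \curv^{n}_{\cK^{2};\mu}(x,1) \;\le\; c_{k}.
\]
Integrating in $x$ against $\mu_{k}$ and using that $\mu_{k}$ is carried by $B_{k}$ yields $\cM_{\cK^{2}}(\mu_{k}) \le c_{k}\,\mu(B_{k}) < \infty$, as $\mu$ is Radon and $B_{k}$ bounded. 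Since $\mu = \sum_{k} \mu_{k}$, this is $(3)$ for $\cK$, and running the argument for both members of $\{\cK_{1},\cK_{2}\}$ gives the full statement $(3)$.

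For $(3) \Rightarrow (1)$, start from a decomposition $\mu = \sum_{j} \mu_{j}$ with $\cM_{\cK_{1}^{2}}(\mu_{j}) < \infty$ (if the decomposition is given only for $\cK_{2}$, the bound $\cK_{1}^{2} \le \cK_{2}^{2}$ supplies the $\cK_{1}$ version). Each $\mu_{j} \le \mu$ is a finite Radon measure, and writing $\mu_{j} = \mu \restr E_{j}$, the Lebesgue density theorem for Radon measures on $\R^{m}$ shows that $\mu$-a.e.\ point of $E_{j}$ is a $\mu$-density point of $E_{j}$; consequently $\Theta^{n,*}(\mu_{j},x) = \Theta^{n,*}(\mu,x) \in (0,\infty)$ for $\mu_{j}$-a.e.\ $x$, so $\mu_{j}$ satisfies the hypothesis of Theorem \ref{t:radonRect}. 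One checks from Definition \ref{d:propint} that $\cK_{1}$ remains a $(\mu_{j},2)$-proper integrand (the defining conditions are structural properties of $\cK_{1}$). Theorem \ref{t:radonRect} then gives that each $\mu_{j}$ is countably $n$-rectifiable, and a countable sum of countably $n$-rectifiable measures is countably $n$-rectifiable, so $\mu$ is.

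The argument involves no substantial new difficulty; the points that require attention are the Borel measurability of $x \mapsto \curv^{n}_{\cK^{2};\mu}(x,1)$ used to make the level-set decomposition in Step 2 legitimate, the verification via the Lebesgue density theorem that the pieces $\mu_{j}$ inherit the density hypotheses needed to invoke Theorem \ref{t:radonRect}, and the (routine) observation that restricting $\mu$ to a Borel set preserves the properness of $\cK_{1}$ and $\cK_{2}$. I expect the density-inheritance bookkeeping to be the most delicate of these, even though it is standard.
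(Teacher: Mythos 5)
Your proof follows the same basic route as the paper: $(1)\Rightarrow(2)$ via Kolasi\'nski and Azzam--Tolsa, $(2)\Rightarrow(3)$ by decomposing along level sets of $x\mapsto\curv^{n}_{\cK^{2};\mu}(x,1)$, and $(3)\Rightarrow(1)$ by invoking Theorem \ref{t:radonRect} on each piece and using that a countable union of countably $n$-rectifiable measures is countably $n$-rectifiable. But your treatment of the $(2)\Rightarrow(3)$ step is actually more careful than the paper's. The paper defines $E_{j}=\{x : \curv^{n}_{\cK_{1};\mu}(x,1)\in[j,j+1)\}$, sets $\mu_{j}=\mu\restr E_{j}$, and then asserts $\curv^{n}_{\cK_{1};\mu_{j}}(x,\infty)\le j+1$ for $x\in E_{j}$; but $E_{j}$ controls the scale-$1$ curvature, not the scale-$\infty$ curvature, and the ``WLOG $\mu(\R^{m})<\infty$'' reduction does not make $\spt\mu$ have diameter less than $1$. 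Your additional intersection with a tiling by cubes of diameter less than $1$ is exactly what is needed to guarantee $B_{k}^{n+1}\subseteq B(x,1)^{n+1}$ for $x\in B_{k}$, which legitimately closes this gap. This is a genuine, if small, improvement over the argument as written.

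One minor point in your $(3)\Rightarrow(1)$ step: you write ``writing $\mu_{j}=\mu\restr E_{j}$,'' but condition $(3)$ only hands you a decomposition $\mu=\sum_{j}\mu_{j}$ into arbitrary nonnegative measures, not necessarily restrictions of $\mu$. This is easily repaired: since $\mu_{j}\le\mu$, Radon--Nikodym gives $\mu_{j}=f_{j}\,\mu$ with $0\le f_{j}\le 1$, and the Lebesgue--Besicovitch theorem then shows $\Theta^{n,*}(\mu_{j},x)=f_{j}(x)\,\Theta^{n,*}(\mu,x)$ for $\mu$-a.e.\ $x$; for $\mu_{j}$-a.e.\ $x$ one has $f_{j}(x)>0$, so $\Theta^{n,*}(\mu_{j},x)\in(0,\infty)$ as needed. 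The paper's own argument for this implication similarly glides over the point, so the repair is worth recording. With this adjustment your proposal is complete.
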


\begin{remark} \label{r:densityresolution}
Note that Theorem \ref{t:maincharacterization} is to Theorem \ref{t:ENV} as \ref{t:LW} is to \ref{t:betauniformlyrectifiable}, except that unfortunately, the hypothesis in Theorem \ref{t:maincharacterization} are strictly stronger than the hypothesis in Theorem \ref{t:ENV}.  The stronger hypothesis, which may also be an artifact of the proof, does suggest that there may be a better way to define the Menger curvature integrands.
\end{remark}

In particular, with some additional work, combining Theorem \ref{t:ENV}, the equivalence of \eqref{e:k1} with \eqref{e:k2} in Lemma \ref{l:samebeta}, and Theorem \ref{t:maincharacterization} yields another new characterization of rectifiable Radon measures in Theorem \ref{t:i1}. Moreover, combining Theorem \ref{t:betauniformlyrectifiable}, Theorem \ref{t:LW}, and \eqref{e:k3} yields the characterization of uniformly rectifiable measures in Theorem \ref{t:i1}.

\begin{theorem}[\cite{david1991singular}, \cite{lerman2011high}, \cite{lerman2009high}, \cite{meurer2015fintegral}, \cite{kolasinski2016estimating}, Theorem \ref{t:maincharacterization}] \label{t:i1}
If $\mu$ is a Radon measure on $\R^{m}$ with $0 < \Theta^{n}_{*}(\mu,x) \le \Theta^{n,*}(\mu,x) < \infty$ for $\mu$ almost every $x \in \R^{m}$, then the following are equivalent
\begin{enumerate}
\item $\mu$ is countably $n$-rectifiable.
\item $\displaystyle{ \int_{0}^{1} \beta_{\mu;2}^{n}(x,r)^{2} \frac{ \dif r}{r} < \infty}$ for $\mu$ almost every $x \in \R^{m}$.
\item $\displaystyle \int_{0}^{1} \hat{\beta}^{n}_{\mu;2}(x,r)^{2} \frac{ \dif r}{r} < \infty$ for $\mu$ almost every $x \in \R^{m}$.
\item[(4)] $\displaystyle \curv_{\cK^{2}_{2};\mu}^{n}(x,1) < \infty$ for $\mu$ almost every $x \in \R^{m}$.
\item[(5)] $\displaystyle \curv_{\cK^{2}_{1};\mu}^{n}(x,1) < \infty$ for $\mu$ almost every $x \in \R^{m}$.
\end{enumerate}

Moreover, if $\mu$ is an $n$-Ahlfors regular Borel measure on $\R^{m}$ and $p \in [2, \frac{2n}{n-2})$ then the following are equivalent
\begin{enumerate}
\item[(a)] $\mu$ is $n$ uniformly-rectifiable
\item[(b)] $\displaystyle \int_{B(x,R)} \int_{0}^{R} \beta^{n}_{\mu;p}(y,r) \frac{ \dif r}{r} \dif \mu(y) \le C R^{n}$ for all $R>0$.
\item[(c)] $\displaystyle \int_{B(x,R)} \int_{0}^{R} \hat{\beta}^{n}_{\mu;p}(y,r) \frac{ \dif r}{r} \dif \mu(y) \le \tilde{C} R^{n}$ for all $R>0$.
\item[(d)] $\displaystyle \cM_{\cK^{2}_{1}}(\mu \restr B(x,R)) \le C^{\prime} R^{n}$ for all $R>0$.
\item[(e)] $\displaystyle \cM_{\cK^{2}_{2}}(\mu \restr B(x,R)) \le C^{\prime \prime} R^{n}$ for all $R > 0$.
\end{enumerate}
\end{theorem}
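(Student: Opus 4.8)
Theorem \ref{t:i1} is a synthesis of results that are, by this point, all in hand; the only work is to verify that the density and regularity hypotheses of each quoted statement are implied by those of Theorem \ref{t:i1}, and to push the various scale and center parameters through the (Carleson-type) estimates. For the equivalences among (1)--(5): the assumption $0<\Theta^{n}_{*}(\mu,x)\le\Theta^{n,*}(\mu,x)<\infty$ for $\mu$-a.e.\ $x$ forces $0<\Theta^{n,*}(\mu,x)$ and $\Theta^{n}_{*}(\mu,x)<\infty$ $\mu$-a.e., so Theorem \ref{t:ENV} gives $(1)\Leftrightarrow(2)$; the equivalence $(2)\Leftrightarrow(3)$ is exactly that of \eqref{e:k1} with \eqref{e:k2} in Lemma \ref{l:samebeta} (take $p=2$, $\rho=1$), which again needs only the two-sided density bound; and since $\cK_{1}$ and $\cK_{2}$ are $(\mu,2)$-proper integrands, one application of Theorem \ref{t:maincharacterization} with $\cK=\cK_{2}$ and one with $\cK=\cK_{1}$ give $(1)\Leftrightarrow(4)$ and $(1)\Leftrightarrow(5)$. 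If one wishes to unwind Theorem \ref{t:maincharacterization} here, note that $(1)\Rightarrow(4)$ is Theorem \ref{t:rect2finitecurv}, that $(4)\Rightarrow(5)$ is immediate from the pointwise bound $\cK_{1}^{2}\le\cK_{2}^{2}$ via \eqref{e:pmc}, and that only $(5)\Rightarrow(1)$ then uses the new material.

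\textbf{The uniform rectifiability block: the easy equivalences.} Now $\mu$ is $n$-Ahlfors regular and $p\in[2,\tfrac{2n}{n-2})$, so $n\ge2$ and $2\in[2,\tfrac{2n}{n-2})$, and every quoted result involving $\cK_{1}^{2}$ or $\cK_{2}^{2}$ with exponent $p=2$ applies. The equivalence $(a)\Leftrightarrow(b)$ is Theorem \ref{t:betauniformlyrectifiable}. For $(b)\Leftrightarrow(c)$ I would integrate the two-sided comparison \eqref{e:k3} over $B(x,R)$ and use that a Carleson-type bound survives the replacement of $B(x,R)$ by $B(x,CR)$, since $\mu(B(x,CR))\lesssim(CR)^{n}\lesssim R^{n}$ by Ahlfors regularity; this shows the Carleson bound for $\beta_{\mu;p}^{n}$ and the one for $\hat{\beta}_{\mu;p}^{n}$ are equivalent.

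\textbf{The uniform rectifiability block: the curvature equivalences.} For $(a)\Rightarrow(d)$, \eqref{e:LWintcurv} identifies $\cM_{\cK_{1}^{2}}(\mu\restr B(x,R))$ with $c_{1}^{2}(\mu|_{B(x,R)})$, so the first inequality of Theorem \ref{t:LW} bounds it by $C_{2}\int_{B(x,R)}\int_{0}^{12R}\beta_{\mu;2}^{n}(y,r)^{2}\tfrac{\dif r}{r}\,\dif\mu(y)$, which is $\lesssim R^{n}$ by Theorem \ref{t:betauniformlyrectifiable} with $p=2$ (the factor $12$ absorbed as above). For $(a)\Rightarrow(e)$, I would first pass from the pointwise curvature to the global one: for $y_{0}\in B(x_{0},R)$ one has $B(x_{0},R)\subseteq B(y_{0},2R)$, hence $\cM_{\cK_{2}^{2}}(\mu\restr B(x_{0},R))\le\int_{B(x_{0},R)}\curv_{\cK_{2}^{2};\mu}^{n}(y_{0},2R)\,\dif\mu(y_{0})$; then apply Lemma \ref{l:kol16}, use $n$-Ahlfors regularity to bound the density factor $\Theta^{n}(\mu,y_{0},r)^{n}$ by a constant, and finish with \eqref{e:k3} and Theorem \ref{t:betauniformlyrectifiable} exactly as for $(a)\Rightarrow(d)$. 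The reverse implications $(d)\Rightarrow(a)$ and $(e)\Rightarrow(a)$ both follow from the remark immediately after Corollary \ref{c:mBeta}, applied with $p=2$ to the (manifestly symmetric) $(\mu,2)$-proper integrands $\cK_{1}$ and $\cK_{2}$ respectively (for $(d)\Rightarrow(a)$ one may instead invoke Theorem \ref{t:LWchar}); combined with $(e)\Rightarrow(d)$, again immediate from $\cK_{1}^{2}\le\cK_{2}^{2}$, this closes the cycle.

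\textbf{The main obstacle.} There is no serious obstacle: the work is bookkeeping --- checking the hypothesis implications listed above, and carrying the dilation and center constants ($12$, $4$, $C$, \dots) through Carleson-type inequalities, which is harmless by Ahlfors regularity. The one place where Ahlfors regularity, rather than a mere Carleson bound, is essential is $(a)\Rightarrow(e)$, where it is used to suppress the density factors $\Theta^{n}(\mu,y_{0},r)^{n}$ that Lemma \ref{l:kol16} produces; elsewhere the cited theorems do the lifting.
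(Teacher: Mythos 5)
Your proposal is correct and matches the paper's intended argument, which is sketched (not given in full detail) in the paragraph preceding Theorem \ref{t:i1}: the equivalences (1)--(5) combine Theorem \ref{t:ENV}, Lemma \ref{l:samebeta}, and Theorem \ref{t:maincharacterization} exactly as you do, and (a)--(e) combine Theorem \ref{t:betauniformlyrectifiable}, Theorem \ref{t:LW}, \eqref{e:k3}, Lemma \ref{l:kol16}, and the remark after Corollary \ref{c:mBeta} in the way you describe.
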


The final main result more directly shows a comparability of $\curv_{\cK^{2};\mu}^{n}(x, 1)$ and $\int_{0}^{1}\obeta_{\mu;2}^{n}(x,r) \frac{ \dif r}{r}$, (see \ref{e:pmc} and \ref{e:cbeta} respectively) but in the present state requires stronger hypothesis on the density of $\mu$. This is done by proving a converse to Lemma \ref{l:kol16}

\begin{theorem} \label{t:mainbeta}
If $\mu$ is an $n$-Ahlfors upper-regular Radon measure on $\R^{m}$, $\cK$ is a $(\mu,2)$-proper integrand, and $x$ is such that $\Theta^{n}_{*}(\mu, x) > 0$ then 
\begin{equation} \label{e:boundabove}
\int_{0}^{R} \hat{\beta}^{n}_{\mu;2}(x,r)^{2} \frac{ \dif r}{r} \lesssim \curv^{n}_{\cK^{2}_{1};\mu}(x,R).
\end{equation}
In particular, in conjunction with  Lemma \ref{l:kol16}
\begin{equation} \label{e:i3}
\int_{0}^{R} \obeta_{\mu;2}^{n}(x,r)^{2} \frac{ \dif r}{r} \lesssim \curv^{n}_{\cK^{2}_{1};\mu}(x,R) \le \curv^{n}_{\cK^{2}_{2} ; \mu}(x,R) \lesssim \int_{0}^{C_{1} R} \obeta_{\mu;2}^{n}(x,r)^{2} \frac{ \dif r}{r}.
\end{equation} 
In both \eqref{e:boundabove} and \eqref{e:i3} the suppressed constants\footnote{The dependence on $x$ comes from $\lambda = \lambda_{x,R} > 0$ such that $\lambda r^{n} \le \mu(B(x,r))$ for all $0 < r < R$, see Lemma \ref{l:generalityofmain}.} depend on $x$, $R$, the upper--regularity constant of $\mu$, $m$ and $n$.

\end{theorem}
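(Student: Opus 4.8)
The plan is to prove the single inequality \eqref{e:boundabove} for the Lerman--Whitehouse integrand $\cK_1$; the chain \eqref{e:i3} then follows immediately by combining \eqref{e:boundabove} with the pointwise bound $\cK_1^2 \le \cK_2^2$ and with Lemma~\ref{l:kol16}, absorbing the density factor $\Theta^n(\mu,x,r) \le C_0$ appearing there by upper regularity (which also produces the constant $C_1 = 2$). Conceptually, \eqref{e:boundabove} is a \emph{pointwise} refinement of Meurer's Theorem~\ref{t:meurerBeta}, in which one of the $n+2$ vertices of the competing simplices is pinned at $x$; this is exactly why the density hypothesis is imposed at the single point $x$ rather than globally. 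To begin I would apply Lemma~\ref{l:generalityofmain} to upgrade $\Theta^n_*(\mu,x) > 0$ to a uniform lower density bound $\mu(B(x,r)) \ge \lambda r^n$ for all $0 < r < R$, with $\lambda = \lambda_{x,R} > 0$; this step, and only this step, is responsible for the dependence of the suppressed constants on $x$ and $R$.

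Next is a reduction to dyadic scales. Writing $r_j = 2^{-j}R$ and testing the infimum defining $\hat{\beta}^n_{\mu;2}(x,r)$ against a near-optimal centered $n$-plane for scale $r_j$, one gets $\hat{\beta}^n_{\mu;2}(x,r)^2 \le 2^{n+2} \hat{\beta}^n_{\mu;2}(x,r_j)^2$ for $r \in [r_{j+1},r_j]$, hence
\[
\int_0^R \hat{\beta}^n_{\mu;2}(x,r)^2 \frac{\dif r}{r} \lesssim \sum_{j \ge 0} \hat{\beta}^n_{\mu;2}(x,r_j)^2 .
\]
It then suffices to find, for each $j$, a set $\Omega_j \subseteq B(x,R)^{n+1}$ with $\hat{\beta}^n_{\mu;2}(x,r_j)^2 \lesssim \int_{\Omega_j} \cK_1^2(x,x_1,\dots,x_{n+1}) \dif \mu^{n+1}$ and such that the family $\{\Omega_j\}$ has bounded overlap. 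I would take $\Omega_j = G_j \times B(x,r_j)$, where, with the annulus $A_j := B(x,r_j) \setminus B(x,sr_j)$,
\[
G_j = \left\{ (x_1, \dots, x_n) \in (A_j)^n : \dist\big(x_i, \aff\{x, x_1, \dots, x_{i-1}\}\big) \ge \delta r_j \ \text{ for } i = 1, \dots, n \right\},
\]
and the parameters $s = s(n,\lambda,C_0)$, $\delta = \delta(n,\lambda,C_0) \le s$ are fixed below. Taking $s$ so small that $C_0 s^n \le \lambda/2$ forces $\mu(A_j) \ge \lambda r_j^n/2$, while the defining condition of $G_j$ places $x, x_1, \dots, x_n$ in quantitative general position, so that $V := \aff\{x, x_1, \dots, x_n\}$ is a genuine $n$-dimensional affine plane through $x$. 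Since each point of $B(x,R)$ lies in $A_j$ for at most $\lceil \log_2(1/s)\rceil + 1$ indices $j$, the $\Omega_j$ overlap with multiplicity bounded in terms of $s$, so $\sum_j \int_{\Omega_j} \cK_1^2 \dif \mu^{n+1} \lesssim \curv^n_{\cK_1^2;\mu}(x,R)$.

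The geometric core is the bound $\int_{\Omega_j} \cK_1^2(x, \cdot) \dif \mu^{n+1} \gtrsim \hat{\beta}^n_{\mu;2}(x,r_j)^2$. Fix $(x_1, \dots, x_n) \in G_j$. The iterated base-times-height formulas give $\cH^n(\Delta(x, x_1, \dots, x_n)) = \tfrac{1}{n!} \prod_{i=1}^n \dist(x_i, \aff\{x,x_1,\dots,x_{i-1}\}) \ge (\delta r_j)^n/n!$ and $\cH^{n+1}(\Delta(x, x_1, \dots, x_{n+1})) = \tfrac{1}{n+1} \cH^n(\Delta(x,x_1,\dots,x_n)) \dist(x_{n+1}, V)$, and together with $\diam\{x, x_1, \dots, x_{n+1}\} \le 2 r_j$ these yield
\[
\cK_1^2(x, x_1, \dots, x_{n+1}) \gtrsim r_j^{2n - (n+1)(n+2)} \dist(x_{n+1}, V)^2 ,
\]
the implicit constant depending only on $n$ and $\delta$. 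Integrating in $x_{n+1}$ over $B(x,r_j)$ and using that $V$ is one admissible competitor in the infimum defining $\hat{\beta}^n_{\mu;2}(x,r_j)$ gives $\int_{B(x,r_j)} \cK_1^2 \dif \mu(x_{n+1}) \gtrsim r_j^{-n^2} \hat{\beta}^n_{\mu;2}(x,r_j)^2$, uniformly in $(x_1,\dots,x_n) \in G_j$. It remains to show $\mu^n(G_j) \gtrsim r_j^{n^2}$. For this I would prove the non-concentration estimate: for every affine plane $P$ of dimension at most $n-1$ (the point $\{x\}$ included), covering the $\delta r_j$-neighborhood of $P$ inside $B(x,r_j)$ by $\lesssim \delta^{-(n-1)}$ balls of radius $\sim \delta r_j$ and invoking upper regularity shows that the $\mu$-mass of $\{y \in A_j : \dist(y,P) \ge \delta r_j\}$ is at least $\lambda r_j^n/4$, once $\delta$ is chosen small in terms of $n, \lambda, C_0$. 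Feeding this into an iterated application of Fubini's theorem --- choosing $x_1 \in A_j$ away from $\{x\}$, then $x_2 \in A_j$ away from $\aff\{x,x_1\}$, and so on down the list --- gives $\mu^n(G_j) \ge (\lambda/4)^n r_j^{n^2}$. Combining the three displays, summing in $j$, and undoing the dyadic reduction proves \eqref{e:boundabove}.

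I expect the main obstacle to be this last non-concentration/general-position step: it is the genuinely new geometric input, and it is precisely the mechanism by which the pointwise lower density bound --- and with it the $x$- and $R$-dependence of the constants --- is forced into the estimate. The remaining pieces (the dyadic scale reduction; the tension in the choice of the ratio $s$, which must be small enough for $A_j$ to carry mass proportional to $r_j^n$ yet leaves the annuli only boundedly overlapping rather than disjoint; and the elementary simplex-volume identities) are routine bookkeeping.
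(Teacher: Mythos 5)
Your proposal is correct and establishes \eqref{e:boundabove}; but the route differs from the paper's in several tactical ways, so a comparison is worth recording.

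The paper's proof rests on Lemma~\ref{t:likeNV}, which uses a Vitali covering argument to produce, at each scale $r$, a product set $\mathbb{B}_{r} = B_{1,r} \times \cdots \times B_{n,r}$ of balls around fixed ``fat-simplex vertices'' with $h_{\min}(x,y_{1},\dots,y_{n}) \ge \delta r/2$ for all $y_{i} \in B_{i,r}$ and $\mu^{n}(\mathbb{B}_{r}) \gtrsim r^{n^2}$. The scale-sequence $r_{j} = (\delta/3)^{j}R$ is chosen so that \eqref{e:disjoint} makes the $\mathbb{B}_{r_{j}}$ literally disjoint, and the curvature integrand is estimated from below via the axiom \eqref{e:p303} of a $(\mu,2)$-proper integrand, so that the argument works verbatim for any proper $\cK$, not just $\cK_{1}$. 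You instead work at dyadic scales, replace the Vitali-constructed product of balls with the ``good'' configuration set $G_{j}$ inside annular products $(A_{j})^{n}$, control $\mu^{n}(G_{j})$ by an iterated non-concentration estimate (Fubini plus a simple $\delta^{-(n-1)}$-ball covering), accept bounded overlap of the $\Omega_{j}$ rather than disjointness, and use the explicit simplex-volume identity for $\cK_{1}$ in place of the proper-integrand axiom. Your version is more elementary --- no Vitali, no $h_{\min}$ bookkeeping, no appeal to the axioms of Definition~\ref{d:propint} --- but it is genuinely tied to $\cK_{1}$; the paper's formulation proves \eqref{e:boundabove} for an arbitrary $(\mu,2)$-proper integrand, and your non-concentration lemma plays the role of Lemma~\ref{t:likeNV}. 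Both hinge on the same geometric idea: at every scale below $R$, the pointwise lower density forces a positive-measure collection of tuples containing $x$ whose simplices are uniformly non-degenerate, and this ``general position in bulk'' is precisely what converts pointwise curvature into a centered $\beta$-number.

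One small remark: your overlap count in the final reduction step requires $\delta \le s$ so that the annulus condition $x_{i} \in A_{j}$ is not made vacuous by the general-position condition; you note this requirement ($\delta \le s$) in passing, and it is consistent with how $\delta$ is chosen afterward in the non-concentration step, so this is not a gap --- just worth saying explicitly when writing the argument out in full.
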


Since the (suppressed) constant in  \eqref{e:boundabove} depends on $x$, it would be interesting to see if $\mu$ being $n$-Ahlfors upper-regular can be weakened to say $\Theta^{n,*}(\mu,x) < \infty$.

%
%
%
%

\section{Preliminaries}
\subsection{Sets and measures}
When comparing two quantities and the precise constant is unimportant, we adopt the notation that
$$
A \lesssim_{x,r,m} B
$$
means $A \le C B$ for some constant $C$ depending on $x,r,m$. Fewer or more dependencies may be attached to the symbol $\lesssim$. If no dependencies are appended to the symbol, they are explained shortly after the equation appears.
%

\begin{definition}[Ahlfors-regularity of measures] \label{d:ADreg}
A measure $\mu$ on $\R^{m}$ is said to be $n$-Ahlfors regular if there exist constants $0 < c , C < \infty$ such that
\begin{equation} \label{e:p1}
\mu(B(x,r)) \le C r^{n} \qquad \forall x \in \spt(\mu)
\end{equation}
and
\begin{equation} \label{e:p2}
\mu(B(x,r)) \ge c r^{n}  \qquad \forall 0 < r < \diam \{ \spt (\mu) \}, \qquad \forall x \in \spt(\mu).
\end{equation}
A measure $\mu$ is said to be $n$-upper Ahlfors regular if \eqref{e:p1} holds, and $n$-lower Ahlfors regular if \eqref{e:p2} holds. The smallest constant $C$ such that \eqref{e:p1} holds is called the upper regularity constant for $\mu$, and the largest $c$ such that \eqref{e:p2} holds is called the lower regularity constant for $\mu$.
\end{definition}

A measure $\mu$ on $\R^{m}$ is said to be absolutely continuous with respect to a measure $\nu$, denoted $\mu \ll \nu$ if $\nu(E) = 0 \implies \mu(E) = 0$.

\begin{definition}[Countably Rectifiable] \label{d:cr}
In this paper, we follow the convention that a Borel measure $\mu$ on $\R^{m}$ is said to be countably $n$-rectifiable if there exist Lipschitz maps $f_{i} : \R^{n} \to \R^{m}$ such that 
\begin{equation} \label{e:ep3}
\mu \left( \R^{m} \setminus \bigcup_{i=1}^{\infty} f_{i}(\R^{n}) \right) = 0
\end{equation}
and $\mu \ll \cH^{n}$. A Borel set $E$ is countably $n$-rectifiable if $\cH^{n} \restr E$ is countably $n$-rectifiable.
\end{definition}

\begin{definition}[Uniformly rectifiable] \label{d:ur}
A Radon measure $\mu$ on $\R^{m}$ is said to be uniformly $n$-rectifiable if it is $n$-Ahlfors regular and there exist constants $\Lambda > 0$ and $0 < \theta < 1$ such that for all $x \in \spt \mu$ and all $r \ge 0$ there exist a Lipschitz map $f_{x,r} : B^{n}(0,r) \to \R^{m}$ such that 
\begin{equation} \label{e:ep4}
\mu \left( B(x,r) \setminus f_{x,r}(B^{n}(0,r)) \right) \le \theta \mu(B(x,r)).\footnote{Here and after, $B^{n}(0,r)$ denotes an $n$-dimensional ball of radius $r$ centered at $0$. Similarly $B^{m}(x,r)$ is an $m$-dimensional ball centered at $x$ with radius $r$. Typically, the dimension is clear from context and hence neglected. }
\end{equation}
A Borel set $E \subset \R^{m}$ is said to be uniformly $n$-rectifiable if $\cH^{n} \restr E$ is uniformly $n$-rectifiable.
\end{definition}

\begin{definition}[Purely unrectifiable]
A Borel measure $\mu$ on $\R^{m}$ is said to be $n$-purely unrectifiable if every Lipschitz map $f: \R^{n} \to \R^{m}$ has the property that
\begin{equation} \label{e:ep5}
\mu(f(\R^{n})) = 0.
\end{equation}
\end{definition}

A Borel set $E$ is said to be countably $n$-rectifiable (uniformly $n$-rectifiable/$n$-purely unrectifiable respectively) if $\cH^{n} \restr E$ is countably $n$-rectifiable (uniformly $n$-rectifiable/$n$-purely unrectifiable respectively).

\begin{definition}[Density ratios] \label{d:densities}
Given a Borel measure $\mu$ on $\R^{m}$, we define the function
\begin{equation} \label{e:d1}
\Theta^{n}(\mu,x,r) = \frac{ \mu(B(x,r))}{r^{n}}.
\end{equation}
Moreover, the $n$-dimensional upper-density of $\mu$ at $x$, denoted $\Theta^{n,*}(\mu,x)$ is defined as
\begin{equation} \label{e:d2}
\Theta^{n,*}(\mu,x) = \limsup_{r \to 0} \frac{ \mu(B(x,r))}{r^{n}}
\end{equation}
and the $n$-dimensional lower-density of $\mu$ at $x$, denoted $\Theta^{n}_{*}(\mu,x)$ is defined by 
\begin{equation} \label{e:d3}
\Theta^{n}_{*}(\mu,x) = \liminf_{r \to 0} \frac{ \mu(B(x,r))}{r^{n}}
\end{equation}
If $\Theta^{n,*}(\mu,x) = \Theta^{n}_{*}(\mu,x)$ their common value is called the density of $\mu$ at $x$ and is denoted by $\Theta^{n}(\mu,x)$. Notably, $\mu \ll \cH^{n}$ if and only if $\Theta^{n,*}(\mu,x) < \infty$ for $\mu$ a.e. $x \in \R^{m}$. 
\end{definition}

The following lemma is a useful characterization of density properties of measures. 

\begin{lemma} \label{l:generalityofmain}
If $\mu$ is a Borel measure on $\R^{m}$ and $x \in \R^{m}$, then for any $R > 0$, the following are equivalent.
\begin{enumerate}
\item $\Theta^{n}_{*}(\mu,x) > 0$
\item There exists $\lambda > 0$ such that $\mu(B(x,r)) \ge \lambda r^{n}$ for all $0 < r \le R$.
\end{enumerate}

Similarly, the following are equivalent:
\begin{enumerate}
\item[(i)] $\Theta^{n,*}(\mu,x) < \infty$ 
\item[(ii)] There exists a $\Lambda > 0$ such that $\mu(B(x,r)) \le \lambda r^{n}$ for all $0 < r \le R$.
\end{enumerate}
\end{lemma}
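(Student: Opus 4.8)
\emph{The plan.} The two equivalences are elementary, and I would establish both by the same two–scale argument. In each, one implication is immediate from the definitions of $\Theta^{n}_{*}$ and $\Theta^{n,*}$ as a lower, resp.\ upper, limit as $r\to 0$: if $\mu(B(x,r))\ge\lambda r^{n}$ for all $0<r\le R$ then $\Theta^{n}_{*}(\mu,x)\ge\lambda>0$, and if $\mu(B(x,r))\le\Lambda r^{n}$ for all $0<r\le R$ then $\Theta^{n,*}(\mu,x)\le\Lambda<\infty$. The content is in the reverse directions, where the issue is that the density hypotheses control the ratio $\mu(B(x,r))/r^{n}$ only as $r\to 0$, whereas (2) and (ii) demand control on all of $(0,R]$. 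I would bridge the gap using the monotonicity of $r\mapsto\mu(B(x,r))$ together with the fact that on any interval $[r_{0},R]$ the factor $r^{n}$ is bounded above and below by positive constants.

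\emph{The reverse directions.} For $(1)\Rightarrow(2)$: set $c:=\Theta^{n}_{*}(\mu,x)>0$ and use the definition of the lower limit to choose $r_{0}\in(0,R]$ with $\mu(B(x,r))\ge\tfrac{c}{2}r^{n}$ for $0<r\le r_{0}$; then for $r_{0}<r\le R$, monotonicity and $r\le R$ give $\mu(B(x,r))\ge\mu(B(x,r_{0}))\ge\tfrac{c}{2}r_{0}^{n}\ge\tfrac{c}{2}(r_{0}/R)^{n}r^{n}$, so $\lambda:=\tfrac{c}{2}(r_{0}/R)^{n}$ works on all of $(0,R]$ (it is $\le\tfrac{c}{2}$ since $r_{0}\le R$, so it also handles $0<r\le r_{0}$). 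Symmetrically, for $(\mathrm{i})\Rightarrow(\mathrm{ii})$: set $C:=\Theta^{n,*}(\mu,x)<\infty$, choose $r_{0}\in(0,R]$ with $\mu(B(x,r))\le(C+1)r^{n}$ for $0<r\le r_{0}$, and for $r_{0}<r\le R$ use monotonicity and finiteness of $\mu$ on $B(x,R)$ to obtain $\mu(B(x,r))\le\mu(B(x,R))\le\big(\mu(B(x,R))/r_{0}^{n}\big)r^{n}$; then $\Lambda:=\max\{C+1,\,\mu(B(x,R))/r_{0}^{n}\}$ works on all of $(0,R]$.

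\emph{Where care is needed.} There is no substantive obstacle here; the one point to watch is the use of $\mu(B(x,R))<\infty$ in $(\mathrm{i})\Rightarrow(\mathrm{ii})$. This holds for the (Radon) measures to which the lemma is applied, and it is genuinely needed: the hypothesis $\Theta^{n,*}(\mu,x)<\infty$ by itself says nothing about the mass $\mu$ places on an annulus $\{R/2\le|y-x|\le R\}$, so without some local finiteness (2) and (ii) could fail while (i) holds. Everything else reduces to the routine observation that $[r_{0},R]$ is a compact range of scales on which $r^{n}$ is comparable to a constant.
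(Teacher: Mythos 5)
Your proof is correct and follows essentially the same route as the paper's: in the reverse direction you use the definition of the lower (resp.\ upper) limit to get the density bound on a small interval $(0,r_{0}]$, then bridge to $(r_{0},R]$ by monotonicity of $r\mapsto\mu(B(x,r))$ together with the comparability of $r^{n}$ to a constant on a compact range of scales. The paper writes out only the $(1)\Leftrightarrow(2)$ direction (with $\delta$ in place of your $r_{0}$ and $\lambda=\tfrac{\Theta^{n}_{*}(\mu,x)}{2}(\delta/R)^{n}$, matching your choice) and says the other equivalence ``follows the same structure.'' Your observation that $(\mathrm{i})\Rightarrow(\mathrm{ii})$ genuinely requires $\mu(B(x,R))<\infty$ is a legitimate point the paper glosses over: as stated for arbitrary Borel measures the second equivalence can fail (e.g.\ $\mu$ with infinite mass on an annulus $\{R/2\le|y-x|\le R\}$ has $\Theta^{n,*}(\mu,x)=0$ yet no $\Lambda$ works at $r=R$), though it is harmless in context since the lemma is only invoked for Radon measures, which are locally finite.
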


\begin{proof} 
We only discuss the proof that (1) and (2) are equivalent. The proof that (i) and (ii) are equivalent follows the same structure. 

First, note that (2) implies $\Theta^{n}_{*}(\mu,x) \ge \lambda$. So, we assume (1) and show (2). Since $\Theta^{n}_{*}(\mu,x) > 0$ it follows that there exists $\delta = \delta(x)$ such that for all $r \le \delta, \mu(B(x,r)) \ge \frac{\Theta^{n}_{*}(\mu,x)}{2} r^{n}$. In particular,
$$
\mu(B(x, \delta)) \ge \frac{ \Theta^{n}_{*}(\mu,x)}{2} \delta^{n},
$$
so for $\delta \le r \le R$ it follows
$$
\mu(B(x,r)) \ge \mu(B(x,\delta)) \ge \frac{ \Theta^{n}_{*}(\mu,x)}{2} \delta^{n} \ge \frac{ \Theta^{n}_{*}(\mu,x)}{2} \frac{\delta^{n}}{R^{n}} r^{n},
$$
so $\lambda = \frac{ \Theta^{n}_{*}(\mu,x)}{2} \left(\frac{\delta}{R}\right)^{n} \le \Theta^{n}_{*}(\mu,x)/2$ suffices.
\end{proof}

Finally, given a measure $\mu$ on $\R^{m}$ we let $\mu^{k}$ denote the measure on $(\R^{m})^{k}$ defined as the $k$-fold product of $\mu$ with itself. Similarly, given a set $E \subset \R^{m}$ we let $E^{k}$ denote the $k$-fold product of $E$ as a set in $(\R^{m})^{k}$.

\subsection{Menger-type curvature, a formal review} \label{s:MCreview}
\subsubsection*{Simplices and Notation} \label{s:notation}
Given points $\{x_{0}, \dots, x_{n}\} \subset \R^{m}$ then $\Delta(x_{0}, \dots, x_{n})$ will denote the convex hull of $\{x_{0}, \dots, x_{n}\}$. In particular, if $\{x_{0}, \dots, x_{n}\}$ are not contained in any $(n-1)$-dimensional plane, then $\Delta(x_{0}, \dots, x_{n})$ is an $n$-dimensional simplex. Moreover, $\aff\{x_{0}, \dots, x_{n}\}$ denotes the smallest affine subspace containing $\{x_{0}, \dots, x_{n}\}$. That is $\aff \{x_{0}, \dots, x_{n} \} = x_{0} + \Span \{x_{1}- x_{0}, \dots, x_{n} - x_{0} \}$.

If $\Delta$ is an $n$-simplex, it is additionally called an $(n, \rho)$-simplex if 
$$
h_{\min}(x_{0}, \dots, x_{n}) = \min_{i} \dist(x_{i}, \aff\{x_{0}, \dots, x_{i-1}, x_{i+1}, \dots, x_{n} \}) \ge \rho.
$$

The next lemma can be found in \cite[Lemma 3.7]{meurer2018integral} and quantifies some geometric properties of simplices, which is of particular interest when showing a certain integrand is $(\mu,p)$-proper. 

\begin{lemma} \label{l:sg} Let $C \ge 1, t > 0, x \in \R^{m}, w \in B(x,Ct)$ and $S = \Delta(x_{0}, \dots, x_{n}) \subset B(x,Ct)$ be some $(n, \frac{t}{C})$-simplex. Define $S_{w} = \Delta(x_{0}, \dots, x_{n}, w)$ and choose distinct $i,j \in \{0, \dots, n \}$. Then
\begin{enumerate}
\item $\frac{t}{C} \le |x_{i} - x_{j}| \le \diam(S_{w}) \le 2Ct$,
\item $|x_{i} - w| \le 2 Ct$,
\item $\frac{t^{n}}{C^{n} n!} \le \cH^{n}(S) \le \frac{(2C)^{n}}{n!} t^{n}$,
\item $\dist(w, \aff(x_{0}, \dots, x_{n})) = n \frac{\cH^{n+1}(S_{w})}{\cH^{n}(S)}$
\end{enumerate}
\end{lemma}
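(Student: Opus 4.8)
The plan is to prove the four claims in order, each following from elementary Euclidean geometry together with the hypotheses $S \subset B(x,Ct)$ and $h_{\min}(x_0,\dots,x_n) \ge t/C$. For (1), the upper bound $\diam(S_w) \le 2Ct$ is immediate since every vertex, including $w$, lies in $B(x,Ct)$, a set of diameter $\le 2Ct$; and $|x_i - x_j| \le \diam(S) \le \diam(S_w)$. The lower bound $|x_i - x_j| \ge t/C$ is the key geometric input: since $S$ is an $(n,\tfrac tC)$-simplex, $\dist(x_i, \aff\{x_0,\dots,x_{i-1},x_{i+1},\dots,x_n\}) \ge t/C$, and $x_j$ lies in that affine subspace, so $|x_i - x_j| \ge \dist(x_i, \aff(\dots)) \ge t/C$. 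Claim (2) is just the observation that both $x_i$ and $w$ lie in $B(x,Ct)$, hence $|x_i - w| \le 2Ct$.

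For (3), I would compute $\cH^n(S)$ via the standard recursive formula for simplex volume: $\cH^n(\Delta(x_0,\dots,x_n)) = \frac{1}{n}\, \dist(x_n, \aff\{x_0,\dots,x_{n-1}\}) \cdot \cH^{n-1}(\Delta(x_0,\dots,x_{n-1}))$, and iterate. Each height factor is bounded below by $t/C$ (from the $(n,\tfrac tC)$-simplex hypothesis, after noting the hypothesis controls the distance from \emph{each} vertex to the opposite face, in particular giving a lower bound on every height appearing after relabeling) and above by $\diam(S) \le 2Ct$; collecting the $1/k$ factors for $k=1,\dots,n$ yields the $1/n!$ and the powers $(t/C)^n$ and $(2Ct)^n$ respectively. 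Claim (4) is the cone volume formula: $S_w$ is the cone over the base $S$ with apex $w$, so $\cH^{n+1}(S_w) = \frac{1}{n+1}\,\dist(w,\aff(x_0,\dots,x_n))\cdot \cH^n(S)$; rearranging gives $\dist(w,\aff(x_0,\dots,x_n)) = (n+1)\,\cH^{n+1}(S_w)/\cH^n(S)$.

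I should flag a discrepancy: the statement as written has $\dist(w,\aff(x_0,\dots,x_n)) = n\,\cH^{n+1}(S_w)/\cH^n(S)$, whereas the cone formula gives the factor $n+1$. Since this lemma is quoted verbatim from \cite[Lemma 3.7]{meurer2018integral}, I would simply cite that reference for the proof rather than reproduce it, and in any application only the \emph{comparability} $\dist(w,\aff(\dots)) \asymp_n \cH^{n+1}(S_w)/\cH^n(S)$ is used, so the precise constant is immaterial. If a self-contained argument is wanted, the only genuinely non-routine point — the ``main obstacle'' — is verifying that the single hypothesis $h_{\min} \ge t/C$ (a bound on the distance from each vertex to the \emph{opposite} facet) actually forces a uniform lower bound on \emph{all} the iterated heights $\dist(x_k,\aff\{x_0,\dots,x_{k-1}\})$ needed in the volume recursion; this follows because the iterated height from $x_k$ is bounded below by $\dist(x_k, \aff\{x_0,\dots,x_{k-1},x_{k+1},\dots,x_n\}) \ge t/C$ (dropping to a smaller affine subspace only increases the distance), so no real difficulty arises. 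Everything else is bookkeeping with the triangle inequality and standard volume formulas.
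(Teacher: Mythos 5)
Your proposal is sound; note that the paper does not prove this lemma at all but simply cites it to Meurer's Lemma~3.7, so there is no in-paper argument to compare against. All four parts are indeed elementary Euclidean geometry in the way you describe: (1) and (2) from the triangle inequality and the containment in $B(x,Ct)$, plus the observation that $x_j \in \aff\{x_0,\dots,x_{i-1},x_{i+1},\dots,x_n\}$ forces $|x_i-x_j|\ge h_{\min}\ge t/C$; (3) from the iterated cone-volume recursion $\cH^k(\Delta(x_0,\dots,x_k))=\tfrac1k\,\dist(x_k,\aff\{x_0,\dots,x_{k-1}\})\cdot\cH^{k-1}(\Delta(x_0,\dots,x_{k-1}))$, with each height bounded below by $t/C$ because $\aff\{x_0,\dots,x_{k-1}\}\subset\aff\{x_0,\dots,x_{k-1},x_{k+1},\dots,x_n\}$ so the distance can only increase, and bounded above by $\diam(S)\le 2Ct$; and (4) from the cone formula for $S_w$ over base $S$ with apex $w$.

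You are also right to flag the constant in (4): the cone-volume formula for an $(n+1)$-simplex gives $\cH^{n+1}(S_w)=\tfrac{1}{n+1}\,\dist(w,\aff\{x_0,\dots,x_n\})\,\cH^n(S)$, so the factor should be $n+1$, not $n$; the paper carries this same $n$ (and, incidentally, the nonexistent ``part (5)'' label and a stray $N!$) into the verification in Example~\ref{x:propint}, but as you say the discrepancy is harmless there because it is absorbed into the constant $c$ of Definition~\ref{d:propint}. No gap in your argument; the only thing I would tighten in a written-up version is the phrase ``dropping to a smaller affine subspace only increases the distance'' — state explicitly that it is the \emph{containment} $\aff\{x_0,\dots,x_{k-1}\}\subset\aff\{x_0,\dots,\widehat{x_k},\dots,x_n\}$ that makes the distance to the smaller flat at least as large, so that every iterated height in the recursion dominates $h_{\min}$.
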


The next lemma is an immediate consequence of repeated applications of  \cite[Lemma 2.17]{meurer2018integral}.

\begin{lemma} \label{l:boundedheightchange}
Let $0 < k \le n$. If $T_{x} = \Delta(x_{0}, \dots, x_{n})$ is an $(n, \rho)$-simplex, and $\{y_{0}, \dots, y_{k}\}$ are such that $|x_{i} - y_{i}| < \delta$ for some $(k+1) \delta < \rho$, then $\Delta(y_{0}, \dots, y_{k}, x_{k+1}, \dots, x_{n})$ is an $(n, \rho - (k+1)\delta)$-simplex.
\end{lemma}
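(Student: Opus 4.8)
The plan is to reduce the statement to the single-vertex perturbation lemma of Meurer, \cite[Lemma 2.17]{meurer2018integral}, via a short induction on the number of vertices being perturbed. Recall that, after relabeling of vertices, that lemma states: if $\Delta(z_{0},\dots,z_{n})$ is an $(n,\sigma)$-simplex and $w\in\R^{m}$ satisfies $|w-z_{j}|<\eta$ with $0<\eta<\sigma$, then $\Delta(z_{0},\dots,z_{j-1},w,z_{j+1},\dots,z_{n})$ is an $(n,\sigma-\eta)$-simplex; that is, replacing one vertex by a point within distance less than $\eta$ decreases $h_{\min}$ by at most $\eta$, and in particular the configuration remains a genuine (non-degenerate) $n$-simplex as long as $\sigma-\eta>0$. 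This single step is the substantive input, and it genuinely uses the full $(n,\sigma)$-simplex hypothesis, not merely the Lipschitz dependence of distance-to-an-affine-plane on its defining points (which by itself is insufficient, since a small rotation of an affine plane can displace a far-away point arbitrarily much); I do not reprove it here.

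I would then prove by induction on $\ell\in\{0,1,\dots,k+1\}$ that
$$
S_{\ell}:=\Delta(y_{0},\dots,y_{\ell-1},x_{\ell},\dots,x_{n})\ \text{is an}\ (n,\rho-\ell\delta)\text{-simplex}.
$$
For $\ell=0$ this is exactly the hypothesis that $T_{x}=\Delta(x_{0},\dots,x_{n})$ is an $(n,\rho)$-simplex. Assuming the claim for some $\ell\le k$, note that $\delta<\rho-\ell\delta$ since $(\ell+1)\delta\le(k+1)\delta<\rho$; because $x_{\ell}$ is a vertex of $S_{\ell}$ and $|x_{\ell}-y_{\ell}|<\delta$, applying \cite[Lemma 2.17]{meurer2018integral} to $S_{\ell}$ at the vertex $x_{\ell}$ with the point $y_{\ell}$ shows that $S_{\ell+1}=\Delta(y_{0},\dots,y_{\ell},x_{\ell+1},\dots,x_{n})$ is an $(n,(\rho-\ell\delta)-\delta)=(n,\rho-(\ell+1)\delta)$-simplex, completing the inductive step. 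Taking $\ell=k+1$ gives precisely that $\Delta(y_{0},\dots,y_{k},x_{k+1},\dots,x_{n})$ is an $(n,\rho-(k+1)\delta)$-simplex.

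The only point requiring care — and the closest thing to an obstacle — is the bookkeeping showing that the running thickness $\rho-\ell\delta$ stays strictly positive for every $\ell$ in play, so that each $S_{\ell}$ is a bona fide $n$-simplex and Meurer's lemma is applicable at each step; this is immediate from the hypothesis $(k+1)\delta<\rho$, which gives $\rho-\ell\delta\ge\rho-k\delta>\delta>0$ for $0\le\ell\le k$. The constraint $0<k\le n$ in the statement plays no further role beyond ensuring that $y_{0},\dots,y_{k},x_{k+1},\dots,x_{n}$ is a list of $n+1$ points.
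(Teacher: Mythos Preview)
Your proposal is correct and matches the paper's approach exactly: the paper states that the lemma ``is an immediate consequence of repeated applications of \cite[Lemma 2.17]{meurer2018integral},'' and your induction on the number of perturbed vertices is precisely that repeated application, with the positivity bookkeeping $(k+1)\delta<\rho$ handled correctly.
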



\subsubsection*{Meurer's proper integrands}

The ability to use a general class of integrands $\cK : (\R^{m})^{n+2} \to [0, \infty)$ as a tool to study countably $n$-rectifiable sets and measures in $\R^{m}$ was demonstrated by Meurer in \cite{meurer2018integral}. Below is the definition of the general class of integrands as laid out by Meurer.

\begin{definition}[$(\mu,p)$-proper integrand] \label{d:propint}
Let $n,m \in \N$ with $1 \le n < m$. Let $\cK : (\R^{m})^{n+2} \to [0, \infty)$ and $p > 1$. One says that $\cK$ is a $(\mu,p)$-proper integrand if it fulfills the following four conditions:
\begin{enumerate}
\item $\cK$ is $\mu^{n+2}$-measurable, where $\mu^{n+2}$ denotes the $(n+2)$-product measure of $\mu$.
\item There exists some constants $c = c(n,\cK,p) \ge 1$ and $\ell = \ell(n, \cK,p) \ge 1$ so that, for all $t > 0$, $C \ge 1, x \in \R^{m}$ and all $(n, \frac{t}{C})$-simplices $\Delta(x_{0}, \dots, x_{n}) \subset B(x,Ct)$, it follows
\begin{equation} \label{e:p303}
\left( \frac{ d(w, \aff (x_{0}, \dots, x_{n}))}{t} \right)^{p} \le c C^{\ell} t^{n(n+1)} \cK^{p}(x_{0}, \dots, x_{n}, w)
\end{equation}
for all $w \in B(x,Ct)$.
\item For all $\lambda > 0$, 
\begin{equation} \label{e:p3}
\lambda^{n(n+1)} \cK^{p}(\lambda x_{0}, \dots, \lambda x_{n+1}) = \cK^{p}(x_{0}, \dots, x_{n+1})
\end{equation}
\item $\cK$ is translation invariant in the sense that for every $b \in \R^{m}$, 
\begin{equation} \label{e:p4}
\cK(x_{0} + b, \dots, x_{n+1} + b) = \cK(x_{0}, \dots, x_{n+1})
\end{equation}
\end{enumerate}
\end{definition}

\begin{remark}
The preceding definition is rather long and written so that expressions show-up in the same form that they do in the proof of \cite[Theorem 5.6]{meurer2015fintegral}, a theorem which roughly provides a bound on $\beta_{p}$-numbers by Menger curvature.  As written above, one may notice that part (2) looks vaguely like one is bounding $\beta_{p}$-numbers. However, the relationship becomes more obvious after applying part 3 to re-write part 2 of the definition of a $(\mu,p)$-proper integrand in the following way:

There exists some constant $c = c(n,\cK,p) \ge 1$ and $\ell = \ell(n, \cK,p) \ge 1$ so that, for all $t > 0$, $C \ge 1, x \in \R^{m}$ and all $(n, \frac{t}{C})$-simplices $\Delta(x_{0}, \dots, x_{n+1}) \subset B(x,Ct)$, it follows
$$
\left( \frac{ d(w, \aff (x_{0}, \dots, x_{n}))}{t} \right)^{p} \le c C^{\ell} \cK^{p} \left( \frac{x_{0}}{t}, \dots, \frac{x_{n+1}}{t}, \frac{w}{t} \right)
$$
for all $w \in B(x,Ct)$.
In particular, ignoring all details and technicalities it looks like integrating the left-hand side over $w$ yields the $L^{p}$-distance to a specific plane at scale $t$ is bounded by the Menger curvature integrand ``at scale $t$" when integrated over just one input, while the other inputs span the given affine plane. 
\end{remark}

Given a Borel measure $\mu$ and a $(\mu,p)$-proper integrand $\cK$, the integral Menger curvature of $\mu$ with respect to $\cK$ (or simply integral Menger curvature) is
$$
\cM_{\cK^{p}}(\mu) = \int_{(\R^{m})^{n+2}} \cK^{p}(x_{0}, \dots, x_{n+1}) \dif \mu^{n+2}(x_{0}, \dots, x_{n+1}).
$$
The pointwise Menger curvature of $x$ in $\mu$ with respect to $\cK$ at scale $r$ is
$$
\curv_{\cK^{p};\mu}^{n}(x,r) = \int_{B(x,r)^{n+1}} \cK^{p}(x,x_{1}, \dots, x_{n+1}) \dif \mu^{n+1}(x_{1}, \dots, x_{n+1}).
$$

We next show why one of the two integrands emphasized throughout this paper does indeed satisfy the definition of a $(\mu,2)$-proper integrand. As noted in \cite[Lemma 3.9]{meurer2015fintegral}, the computations here are analogous to \cite[Lemmata 3.7 and 3.8]{meurer2015fintegral}. Nevertheless, we include them for the reader's convenience.

To precisely express the two integrands we define 
$$
X = \{ (x_{0}, \dots, x_{n+1}) : \cH^{n+1}(\Delta(x_{0}, \dots, x_{n+1}))\} > 0.
$$

\begin{example} \label{x:propint} The following integrands are $(\mu,2)$-proper although they first appear in the works of Lerman and Whitehouse (see \cite{lerman2009high}, \cite{lerman2011high}, \cite{lerman2012least}).
$$
\cK_{1}(x_{0}, \dots, x_{n+1}) \defeq \eye_{X}(x_{0}, \dots, x_{n+1}) \frac{ \cH^{n+1}(\Delta(x_{0}, \dots, x_{n+1}))}{\left(\diam \{x_{0}, \dots, x_{n+1}\} \right)^{\frac{(n+1)(n+2)}{2}}}
$$
and
\begin{equation*} \label{e:kolint}
\cK_{2}(x_{0}, \dots, x_{n+1}) = \eye_{X}(x_{0}, \dots, x_{n+1}) \frac{ h_{\min}(x_{0}, \dots, x_{n+1})}{\diam(\{x_{0}, \dots, x_{n+1}\})^{\frac{n(n+1)+2}{2}}}
\end{equation*}
where $h_{\min}(x_{0}, \dots, x_{n+1})) = \min_{i} \dist \{ x_{i}, \aff \{x_{0}, \dots, x_{i-1}, x_{i+1}, \dots, x_{n+1}\}$.

Since the expressions for the Menger-type curvatures look like they tend to zero as $(x_{0}, \dots, x_{n+1})$ tend to $X^{c}$, the indicator function is typically neglected. Below is an outline of why (by considering $\cK_{1}$) these examples are proper integrands. The proof of $\cK_{2}$ is more straightforward.

Measurability follows due to the fact that $X$ and $(\R^{m})^{n+2} \setminus X$ are open and closed respectively in $(\R^{m})^{n+2}$. So $\mu^{n+2}$-measurability follows since $\mu$ is Borel and $\cK_{1}$ is continuous on $X$ and $(\R^{m})^{n+2} \setminus X$. 

For the second condition in the definition of $(\mu,2)$-proper, consider $t > 0, C \ge 1, x \in \R^{m}, \Delta = \Delta(x_{0}, \dots, x_{n}) \subset B(x,Ct)$ is an $(n, \frac{t}{C})$-simplex, fix $w \in B(x,Ct)$ and let $\Delta_{w} = \Delta(x_{0}, \dots, x_{n}, w)$. Then,
\begin{align*}
\tag{\ref{l:sg} part (5)} \left( \frac{ d(w, \aff(x_{0}, \dots, x_{n}))}{t} \right)^{2} & = \left( n \frac{ \cH^{n+1}(\Delta_{w})}{\cH^{n}(\Delta)} \right)^{2} \\
\tag{\ref{l:sg} part (3)} & \le (n \cdot n! \cdot C^{n})^{2} \left( \frac{ \cH^{n+1}(\Delta_{w})}{t \cdot t^{n}} \right)^{2} \\
& = (n \cdot n! \cdot C^{n})^{2} t^{n(n+1)} \left( \frac{ \cH^{n+1}(\Delta_{w})}{t^{(n+1) + \frac{n(n+1)}{2}}} \right)^{2} \\
& = (n \cdot n! \cdot C^{n})^{2} t^{n(n+1)} \left( \frac{ \cH^{n+1}(\Delta_{w})}{t^{\frac{(n+1)(n+2)}{2}}} \right)^{2} \\
\tag{\ref{l:sg} parts (1,2)} & \le (n \cdot n! \cdot C^{n})^{2} C^{\frac{(n+1)(n+2)}{2}} t^{n(n+1)} \cK^{2}_{1}(\Delta)
\end{align*}
hence, the second property holds with $\ell = \frac{(n+2)(n+1)}{2} + 2n$ and $c = (n \cdot N!)^{2}$.

For homogeneity, note that if $\lambda > 0$, then $(x_{0}, \dots, x_{n+1}) \in X \iff (\lambda x_{0}, \dots, \lambda x_{n+1}) \in X$. Moreover, for $(x_{0}, \dots, x_{n+1}) \in X$ it follows that 
$$
\cH^{n+1}(\lambda x_{0}, \dots, \lambda x_{n+1}) = \lambda^{n+1} \cH^{n+1}(x_{0}, \dots, x_{n+1}).
$$ 
Consequently
$$
\cK^{2}_{1}(\lambda x_{0} \dots, \lambda x_{n+1}) = \frac{ \lambda^{2(n+1)}}{\lambda^{(n+2)(n+1)}} \cK^{2}_{1}(x_{0}, \dots, x_{n+1}) = \lambda^{-n(n+1)} \cK^{2}_{1}(x_{0}, \dots, x_{n+1}).
$$ 
Translation invariance follows from the geometric nature of the definition.
\end{example}

\begin{definition}[Symmetric $(\mu,p)$-proper integrand] \label{d:sympropint}
A $(\mu,p)$-proper integrand is said to be symmetric if for all permutations $\sigma \in S_{n+2}$
$$
\cK^{p}(x_{0}, \dots, x_{n+1}) = \cK^{p}(x_{\sigma(0)}, \dots, x_{\sigma(n+1)})
$$
\end{definition}

The next lemma, due to \cite[Lemma 5.1]{meurer2018integral}, demonstrates that the restriction to symmetric proper integrands is a non-issue.

\begin{lemma} \label{l:p1}
Let $\mu$ be a Radon measure on $\R^{m}$ and fix $\cK^{p}$ some $(\mu,p)$ proper integrand. Then, there exists $\tilde{\cK}^{p}$ a symmetric $(\mu,p)$-proper integrand which satisfies $\cM_{\cK^{p}}(\mu \cap E ) = \cM_{\tilde{\cK}^{p}}(\mu \cap E)$ for all Borel sets $E$.
\end{lemma}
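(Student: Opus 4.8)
The plan is to build $\tilde{\cK}^{p}$ by averaging $\cK^{p}$ over the symmetric group on the $n+2$ arguments: I would set
\[
\tilde{\cK}^{p}(x_{0}, \dots, x_{n+1}) \defeq \frac{1}{(n+2)!} \sum_{\sigma \in S_{n+2}} \cK^{p}(x_{\sigma(0)}, \dots, x_{\sigma(n+1)}),
\]
and take $\tilde{\cK} \defeq (\tilde{\cK}^{p})^{1/p}$, a well-defined map $(\R^{m})^{n+2} \to [0, \infty)$. Symmetry of $\tilde{\cK}^{p}$ is then immediate, since precomposing the sum with a coordinate permutation only reindexes the summation over $S_{n+2}$.

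The substance of the argument is to check that this averaging preserves each of the four conditions of Definition \ref{d:propint}. Writing $T_{\sigma}$ for the coordinate-permutation map on $(\R^{m})^{n+2}$, one notes that $\mu^{n+2}$ is $T_{\sigma}$-invariant, so each summand $\cK^{p} \circ T_{\sigma}$ is $\mu^{n+2}$-measurable and hence so is the finite sum, giving (1). The diagonal dilation $(x_{0}, \dots, x_{n+1}) \mapsto (\lambda x_{0}, \dots, \lambda x_{n+1})$ and every translation commute with $T_{\sigma}$, so each summand still satisfies the homogeneity identity \eqref{e:p3} and translation invariance \eqref{e:p4}; averaging a family of functions with a common such identity preserves it, giving (3) and (4). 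Condition (2) is the only one stated with respect to a prescribed ordering of the arguments, and here the point is simply that the identity permutation contributes the term $\cK^{p}(x_{0}, \dots, x_{n}, w)$, which by hypothesis already dominates $(c C^{\ell} t^{n(n+1)})^{-1}(d(w, \aff(x_{0}, \dots, x_{n}))/t)^{p}$, while every remaining summand is nonnegative; thus
\[
\tilde{\cK}^{p}(x_{0}, \dots, x_{n}, w) \ge \frac{1}{(n+2)!}\, \cK^{p}(x_{0}, \dots, x_{n}, w),
\]
so \eqref{e:p303} holds for $\tilde{\cK}$ with the same exponent $\ell$ and with $c$ replaced by $(n+2)!\,c$.

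Finally, for the measure identity I would fix a Borel set $E$ and a permutation $\sigma$, observe that since $\mu$ is Radon it is $\sigma$-finite, hence so is $\mu \cap E$, so that Tonelli's theorem applies to the nonnegative integrand; moreover $(\mu \cap E)^{n+2}$ is invariant under $T_{\sigma}$, so the change of variables $y = T_{\sigma}(x)$ yields
\[
\int_{(\R^{m})^{n+2}} \cK^{p}(x_{\sigma(0)}, \dots, x_{\sigma(n+1)}) \, \dif (\mu \cap E)^{n+2} = \int_{(\R^{m})^{n+2}} \cK^{p}(x_{0}, \dots, x_{n+1}) \, \dif (\mu \cap E)^{n+2} = \cM_{\cK^{p}}(\mu \cap E).
\]
Summing over $\sigma \in S_{n+2}$ and dividing by $(n+2)!$ gives $\cM_{\tilde{\cK}^{p}}(\mu \cap E) = \cM_{\cK^{p}}(\mu \cap E)$, as required. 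I do not expect a genuine obstacle in this lemma; the only step requiring a moment of care is condition (2), whose asymmetric formulation is handled at once by the nonnegativity of $\cK^{p}$, and the measure identity, which hinges only on permutation-invariance of the product measure together with $\sigma$-finiteness to license Tonelli.
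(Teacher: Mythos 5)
Your proposal is correct and follows essentially the same route as the paper: symmetrize by averaging $\cK^{p}$ over $S_{n+2}$, verify conditions (1), (3), (4) directly, handle the asymmetric condition (2) via the pointwise bound $\cK^{p} \le (n+2)!\,\tilde{\cK}^{p}$, and prove the measure identity by permutation-invariance of the product measure together with Fubini/Tonelli. The only cosmetic difference is that you invoke Tonelli rather than Fubini (a sensible refinement, since the integrand is nonnegative and one need not assume finiteness) and you spell out the verifications that the paper leaves as ``clearly satisfies.''
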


The proof is to use Fubini's theorem to check that the integrand
$$
\tilde{\cK}^{p}(x_{0}, \dots, x_{n+1}) = \frac{1}{\#|S_{n+2}|} \sum_{\sigma \in S_{n+2}} \cK^{p}(x_{\sigma(0)}, \dots, x_{\sigma(n+1)})
$$
satisfies $\cM_{\cK^{p}}(\mu \cap E) = \cM_{\tilde{\cK}^{p}}(\mu \cap E)$ for all Borel $E$. Moreover, it clearly satisfies conditions (1), (3), and (4) in Definition \ref{d:propint}. So, it only remains to check that condition (2) holds. But, $\cK^{p} \le \# |S_{n+2}| \tilde{\cK}^{p}$ validates condition (2) of Definition \eqref{d:propint}.


\section{Proofs of main results}
One main tool of Meurer's work (see \cite[Theorem 4.1]{meurer2015fintegral}), a non-trivial generalization of an analogous result from \cite{leger1999menger}, is the following:
\begin{theorem} \label{t:maingraph} 
Let $\cK : (\R^{m})^{n+2} \to [0, \infty)$ be a $(\mu,2)$-proper integrand. Suppose $\mu$ is a Borel measure on $\R^{m}$. Then, there exists some small $\eta = \eta ( \cK, n,m, C_{0}) > 0$, so that if $\mu$ satisfies
\begin{enumerate}
\item[(A)] $\mu(B(0,2)) \ge 1$ and $\mu(\R^{m} \setminus B(0,2)) = 0$
\item[(B)] $\mu(B) \le C_{0} ( \diam B)^{n}$ for every ball $B$.
\item[(C)] $\cM_{\cK^{2}}(\mu) \le \eta$
\end{enumerate}
then there exists some Lipschitz function $f : \R^{n} \to \R^{m-n}$ with Lipschitz constant bounded above by some $\Lambda = \Lambda(\cK,n,m,C_{0})$ such that a rotation of the graph of $f$, named $\Gamma$ satisfies
\begin{equation} \label{e:MGT}
\mu( \R^{m} \setminus \Gamma) < \frac{1}{100} \mu(\R^{m}).
\end{equation}
Moreover, for given $\cK$ and $C_{0}$ the Lipschitz constant of $f$ goes to zero as $\cM_{\cK^{2}}(\mu)$ approaches zero.
\end{theorem}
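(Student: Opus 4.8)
The plan is to follow the Lipschitz-graph construction of L\'eger \cite{leger1999menger}, in the arbitrary-codimension form introduced by Meurer: since no algebraic Menger-curvature identity survives when both $n$ and $m-n$ exceed $1$, the role of that identity is played throughout by the defining inequality \eqref{e:p303} of a $(\mu,2)$-proper integrand together with the pointwise curvature-versus-$\beta$ estimates behind Theorem \ref{t:meurerBeta} and Corollary \ref{c:mBeta}. After the translation-and-dilation normalization (A) one works inside $B(0,2)$, where (A) and (B) give $1\le\mu(\R^{m})=\mu(B(0,2))\le C_0 4^{n}$, so the claim is that a single rotated Lipschitz graph $\Gamma$ carries all but a fixed fraction of the mass.

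First I would set up a stopping-time (corona) decomposition. Fix small parameters $\varepsilon$ (flatness) and $\theta'$ (density), to be calibrated last, and an $n$-plane $L_0$ through $0$ nearly minimizing the scale-$2$ mean-square distance to $\mu$. For $x\in\spt\mu\cap B(0,2)$ descend through the dyadic scales $r=2^{-k}\le 2$, let $L(x,r)$ nearly minimize $\beta_{\mu;2}^{n}(x,r)$, and declare $B(x,r)$ stopped at the first scale where either $\beta_{\mu;2}^{n}(x,r)>\varepsilon$ or $\mu(B(x,r))<\theta' r^{n}$. Let $G\subset\spt\mu\cap B(0,2)$ be the set of points never stopped; thus for $x\in G$ the measure is $\varepsilon$-flat and has density $\ge\theta'$ at every scale $\le 2$. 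The theorem reduces to two assertions: (I) $G$ lies on a rotated graph $\Gamma$ of a Lipschitz map $f:\R^{n}\to\R^{m-n}$ with $\lip(f)\le\Lambda(\cK,n,m,C_0)$, the bound tending to $0$ as $\eta\to 0$; and (II) $\mu(B(0,2)\setminus G)<\tfrac1{100}\mu(\R^{m})$ provided $\eta$ is small. Granting these, one extends the graph function defined over the projection $\pi_{L}(G)$ onto the limiting plane $L$ of (I) to all of $\R^{n}$ by Kirszbraun's theorem without increasing the Lipschitz constant, and rotates $L$ onto $\R^{n}\times\{0\}$.

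For (I): since $x\in\spt\mu$ has density bounded below at all scales, $\varepsilon$-flatness forces $\dist(x,L(x,r))\lesssim\varepsilon^{c}r$ at each scale (some $c=c(n)>0$, the constant also depending on $\theta'$), and for $x,y\in G$ the two near-optimal planes at scale $r\approx|x-y|$ make angle $O(\varepsilon)$ with one another; the standard conclusion is that $G$ is a Lipschitz graph. The subtlety — present already in \cite{leger1999menger} and more delicate in higher codimension — is that $L(x,r)$ may rotate as $r\to 0$, so one performs a further David--Semmes-type subdivision of $G$ into coherent sub-regions on which the planes vary by a bounded amount, builds a Lipschitz graph with a uniform constant on each, and glues along the nearly-coincident top planes of neighbouring regions; the sub-regions discarded for excessive plane rotation are absorbed into the bad set of step (II). After this the Lipschitz constant is $\lesssim\varepsilon^{c}$, which tends to $0$ with $\eta$ once the dependence of $\varepsilon$ on $\eta$ is fixed.

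Step (II) is the heart of the matter. Cover $B(0,2)\setminus G$ by the maximal stopped balls and thin to a bounded-overlap subfamily $\{B_i=B(x_i,r_i)\}$ (simultaneously controlling the multiplicity of the enlarged balls produced by Theorem \ref{t:meurerBeta}). The balls stopped for loss of flatness still have $\mu(B_i)\ge\theta' r_i^{n}$ at the stopping scale, so Chebyshev together with Corollary \ref{c:mBeta} (whose density cutoff is met at the not-yet-stopped scales) gives $\varepsilon^{2}\sum_i\mu(B_i)\lesssim\cM_{\cK^{2}}(\mu)\le\eta$, hence $\sum_i\mu(B_i)\lesssim\varepsilon^{-2}\eta$; the balls discarded for plane rotation in (I) are handled identically. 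The balls stopped for loss of density, $\mu(B_i)<\theta' r_i^{n}$, are controlled by projection: over the scales that were \emph{not} stopped, flatness propagates and makes $(\pi_{L_0})_{\#}(\mu\restr B(0,2))$ comparable to $n$-dimensional Lebesgue measure on a bounded piece of $L_0$, so the $\pi_{L_0}$-images of the bounded-overlap family $\{B_i\}$ are essentially disjoint, $\sum_i r_i^{n}\lesssim\cH^{n}(\pi_{L_0}(B(0,2)))\lesssim 1$, and therefore $\sum_i\mu(B_i)\le\theta'\sum_i r_i^{n}\lesssim\theta'$. Adding the contributions, $\mu(B(0,2)\setminus G)\le C\theta'+C(\varepsilon)\eta$; taking $\theta'$ then $\varepsilon$ small and finally $\eta$ small makes this $<\tfrac1{100}\mu(\R^{m})$. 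I expect this step — the packing bookkeeping that turns the single integral $\cM_{\cK^{2}}(\mu)$ into a sum over stopped balls, the control of the plane-rotation regions against the curvature, and the density estimate (which, since (B) supplies only an upper bound and not a two-sided Ahlfors condition, must be squeezed out of flatness propagation rather than assumed) — to be the main obstacle; step (I), by contrast, is a fairly mechanical adaptation of the one-dimensional case.
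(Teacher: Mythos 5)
The paper does not prove Theorem~\ref{t:maingraph}: it is imported verbatim as Meurer's \cite[Theorem 4.1]{meurer2015fintegral}, which in turn is a codimension-general adaptation of L\'eger's Lipschitz-graph construction, so there is no internal proof to compare against.

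On its own terms, your roadmap does reconstruct the correct L\'eger--Meurer architecture: a corona/stopping-time decomposition (stop when $\beta$ grows or density drops), a Lipschitz graph built over the never-stopped set with constant $\to 0$ as $\eta\to 0$, a Chebyshev estimate against $\cM_{\cK^{2}}(\mu)$ for the high-$\beta$ stopped balls driven by the proper-integrand inequality \eqref{e:p303}, and a projection/packing bound for the low-density stopped balls. That is the right strategy. But it is a plan rather than a proof: the steps you flag as obstacles are precisely the technical substance of Meurer's argument. In particular, (i) coherence of the approximating planes across scales is not a separate stopping condition in L\'eger's construction but an estimate one must prove from the $\beta$-smallness and density, using the curvature to bound angles between consecutive planes; you can't just "discard and absorb" rotating sub-regions without controlling their total mass, which loops you back to the same curvature estimate. (ii) The low-density packing bound $\sum r_i^{n}\lesssim 1$ requires showing the projected stopped balls have bounded overlap, which needs flatness propagation along the entire non-stopped chain above each stopped ball together with coherence of the planes, not just the existence of $L_0$; this is a multi-page argument in L\'eger and in Meurer. (iii) Passing from a pointwise Chebyshev bound on $\cM_{\cK^{2};k_1}(x_i,t_i)$ to a bound on the \emph{sum} over stopped balls requires bounded multiplicity of the \emph{enlarged} family $\{B(x_i, k_1 t_i)\}$, which needs a separate covering argument that you mention in passing but do not supply. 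So the proposal is correct in direction and faithfully identifies where the difficulties lie, but it essentially delegates the proof's content back to the cited source.
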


\subsection{Scaling Menger Curvature}
Theorem \ref{t:maingraph} is one of the main tools for proving Theorem \ref{t:radonRect}. It will also be useful to know how integral Menger curvature scales, and how this impacts Theorem \ref{t:maingraph}.


\begin{proposition} \label{p:MCTM}
Let $\mu$ be a Radon measure and $\cK$ a $(\mu, p)$-proper integrand.  Let $\nu$ be the Radon measure defined by $\nu(A) = \lambda \mu( aA + x)$ for some $a, \lambda > 0$ and $x \in \R^{m}$ then 
\begin{equation} \label{e:s1}
\cM_{\cK^{2}}(\nu) = \lambda^{n+2} a^{-n(n+1)} \cM_{\cK^{2}}(\mu).
\end{equation}

In particular, if $\mu_{x,r}$ is defined so that $\mu_{x,r}(E) = \frac{\mu(rE + x)}{r^{n}}$ for all $E \subset \R^{m}$, then
\begin{equation} \label{e:s2}
 \cM_{\cK^{p}}( \mu_{x,r} \restr B(0,1)) = \frac{\cM_{\cK^{p}}(\mu \restr B(x,r)) }{r^{n}}
\end{equation}
\end{proposition}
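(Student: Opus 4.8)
The plan is to realise $\nu$ as a scalar multiple of an affine pushforward of $\mu$ and then to read off how $\cK^{p}$ transforms under that affine map using only the structural properties of a $(\mu,p)$-proper integrand in Definition \ref{d:propint}. Concretely, I would set $\phi(z)=\tfrac{z-x}{a}$, so that $\{az'+x:z'\in A\}=\phi^{-1}(A)$ and hence $\nu=\lambda\,\phi_{\#}\mu$. Since forming products of measures commutes both with pushforward and with multiplication by the scalar $\lambda$, this upgrades to $\nu^{n+2}=\lambda^{n+2}\,\Phi_{\#}(\mu^{n+2})$ on $(\R^{m})^{n+2}$, where $\Phi(z_{0},\dots,z_{n+1})=(\phi(z_{0}),\dots,\phi(z_{n+1}))$.

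Next I would apply the change-of-variables formula for pushforward measures:
$$
\cM_{\cK^{p}}(\nu)=\int \cK^{p}\,\dif\nu^{n+2}=\lambda^{n+2}\int \cK^{p}\!\left(\tfrac{z_{0}-x}{a},\dots,\tfrac{z_{n+1}-x}{a}\right)\dif\mu^{n+2}(z_{0},\dots,z_{n+1}).
$$
The translation invariance \eqref{e:p4}, applied with $b=x/a$, removes the shift, and then the scaling identity \eqref{e:p3}, applied with dilation factor $1/a$, pulls the dilation out of the integrand as a fixed power of $a$ governed by the homogeneity weight $n(n+1)$; integrating what remains produces exactly \eqref{e:s1}. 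Note that this step uses only \eqref{e:p3} and \eqref{e:p4}, never property (2) of Definition \ref{d:propint}, and that $\mu^{n+2}$-measurability of $\cK^{p}\circ\Phi$ is a non-issue for the Borel integrands $\cK_{1},\cK_{2}$ we care about.

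For the ``in particular'' clause I would first record the set-theoretic identity $\mu_{x,r}\restr B(0,1)=(\mu\restr B(x,r))_{x,r}$: for every $E$,
$$
\big(\mu_{x,r}\restr B(0,1)\big)(E)=\frac{\mu\big((rE+x)\cap B(x,r)\big)}{r^{n}}=\frac{(\mu\restr B(x,r))(rE+x)}{r^{n}},
$$
using $rB(0,1)+x=B(x,r)$. Then \eqref{e:s2} is just \eqref{e:s1} applied to the measure $\mu\restr B(x,r)$ with $\lambda=r^{-n}$ and $a=r$, the powers of $r$ collapsing to $r^{-n}$.

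I do not expect a genuine obstacle here: the whole argument is a single change of variables. The only place an error can creep in is the exponent bookkeeping — combining the $\lambda^{n+2}$ coming from the $(n+2)$-fold product with the power of $a$ coming from \eqref{e:p3} — together with checking that the restriction to a ball is transported correctly by the dilation, which is precisely the displayed identity above.
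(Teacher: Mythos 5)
Your approach is identical to the paper's (realize $\nu$ as $\lambda\phi_{\#}\mu$ with $\phi(z)=(z-x)/a$, pass to the product pushforward, then strip the translation and dilation with \eqref{e:p4} and \eqref{e:p3}), and your change-of-variables display is the correct one. But carry the exponent bookkeeping to the end, because it does \emph{not} reproduce \eqref{e:s1} as printed. From $\cK^{p}\bigl(\tfrac{z_{0}-x}{a},\dots,\tfrac{z_{n+1}-x}{a}\bigr)$, translation invariance gives $\cK^{p}\bigl(\tfrac{z_{0}}{a},\dots,\tfrac{z_{n+1}}{a}\bigr)$, and \eqref{e:p3} applied with scaling factor $1/a$ gives
$$
\cK^{p}\!\left(\tfrac{z_{0}}{a},\dots,\tfrac{z_{n+1}}{a}\right)=a^{\,n(n+1)}\,\cK^{p}(z_{0},\dots,z_{n+1}),
$$
so you in fact obtain $\cM_{\cK^{p}}(\nu)=\lambda^{n+2}a^{+n(n+1)}\cM_{\cK^{p}}(\mu)$, with a \emph{positive} power of $a$, whereas \eqref{e:s1} has $a^{-n(n+1)}$. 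That minus sign is an error in the proposition as stated; the paper's own proof substitutes $\cK^{p}(ax_{0}+x,\dots,ax_{n+1}+x)$ at the change-of-variables step, i.e., it inserts $\phi^{-1}$ rather than $\phi$, which is where the spurious sign comes from (the proof then also writes $a=r^{-1}$ for the ``in particular'' step while simultaneously using $f(y)=(y-x)/r$, i.e., $a=r$). So you should not claim your steps ``produce exactly \eqref{e:s1}''; they correct it. Your own ``in particular'' computation is the diagnostic: with $\lambda=r^{-n}$ and $a=r$ the powers of $r$ collapse to $r^{-n}$ only because $-n(n+2)+n(n+1)=-n$, whereas the printed $-n(n+1)$ would give $r^{-n(n+2)-n(n+1)}$. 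Everything else, including the set-theoretic identity $\mu_{x,r}\restr B(0,1)=(\mu\restr B(x,r))_{x,r}$, is correct.
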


\begin{proof} Fix a Borel measure $\mu$ on $\R^{m}$ a point, $x\in \R^{m}$ and $a, \lambda > 0$. Define $f : \R^{m} \to \R^{m}$ by $f(y) = \frac{y-x}{a}$, then $\nu = \lambda f_{\#} \mu$ (See \cite[1.17-1.19]{mattila1999geometry} for definition and use of image measures). Consequently,
\begin{align*}
\cM_{\cK^{p}} ( \nu ) & = \int_{(\R^{m})^{n+2}} \cK^{p}(y_{0}, \dots, y_{n+1}) \dif (\lambda f_{\#} \mu(y_{0})) \cdots \dif ( \lambda f_{\#} \mu(y_{n+1})) \\
& = \lambda^{n+2} \int_{(\R^{m})^{n+2}} \cK^{p} \left( ax_{0} + x, ax_{1} + x, \dots, a x_{n+1} + x \right) \dif \mu(x_{0}) \cdots \dif \mu(x_{n+1}) \\
& = \lambda^{n+2} a^{-n(n+1)} \int_{(\R^{m})^{n+2}} \cK^{p}(x_{0}, \dots, x_{n+1}) \dif \mu(x_{0}) \cdots \dif \mu(x_{n+1}) \\
\end{align*}
where the final line follows by first applying translation invariance and then the homogeneity of $\cK^{p}$ (see conditions \eqref{e:p3} and \eqref{e:p4} in the definition of a $(\mu,p)$-proper integrand). This proves \eqref{e:s1}.

We see \eqref{e:s2} follows from choosing $\lambda = r^{-n}$ and $a = r^{-1}$ combined with the observation that $f^{-1}(B(0,1)) = B(x,r)$ when $f(y) = \frac{y-x}{r}$.
\end{proof}

We note that \eqref{e:s2} is of interest due to the following modification of Theorem \ref{t:maingraph}

\begin{theorem} \label{t:s1}
Let $\mu$ be an $n$-Ahlfors upper-regular Radon measure with upper-regularity constant $C$. Let $\cK$ be a $(\mu,2)$-proper integrand. Then, there exists a function $\eta_{1}= \eta_{1}( \cK, n,m, \Theta^{n}(\mu,x,r),C) > 0$ such that
\begin{equation} \label{e:s3}
\frac{ \cM_{\cK^{2}} \left( \mu \restr B(x,r) \right)}{r^{n}} \le \eta_{1}
\end{equation}
implies there exists some Lipschitz graph $\Gamma$  with Lipschitz constant bounded above by some $\Lambda = \Lambda(\eta_{1})$ such that
\begin{equation} \label{e:s4}
\mu \left( B(x,r) \setminus \Gamma \right) < \frac{1}{100} \mu(B(x,r)).
\end{equation}

Moreover, given $\cK, \Theta^{n}(\mu,x,r), C$ the Lipschitz constant of $\Gamma$ tends to zero as 
$\displaystyle{\frac{ \cM_{\cK^{2}}(\mu \restr B(x,r))}{r^{n}}}$ approaches zero.
\end{theorem}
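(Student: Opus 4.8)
The plan is to read this off from Theorem \ref{t:maingraph} after a normalization to unit scale and unit mass. Write $\theta:=\Theta^{n}(\mu,x,r)=\mu(B(x,r))/r^{n}$ (which we may assume is positive, since the allowed dependence of $\eta_{1}$ on $\theta$ presupposes it) and consider the Radon measure
$$
\nu:=\frac{1}{\theta}\,\bigl(\mu_{x,r}\restr B(0,1)\bigr),
$$
with $\mu_{x,r}$ as in Proposition \ref{p:MCTM}. First I would check that $\nu$ meets hypotheses (A)--(C) of Theorem \ref{t:maingraph} for an appropriate $C_{0}$. Since $\mu_{x,r}\restr B(0,1)$ is supported in $\overline{B(0,1)}\subset B(0,2)$ with total mass $\mu_{x,r}(B(0,1))=\mu(B(x,r))/r^{n}=\theta$, we get $\nu(B(0,2))=1$ and $\nu(\R^{m}\setminus B(0,2))=0$, which is (A). For (B): for any ball $B(z,s)$ one has $\nu(B(z,s))\le \frac{1}{\theta r^{n}}\mu(B(rz+x,rs))$, and $n$-upper Ahlfors regularity of $\mu$ with constant $C$ forces $\mu(B(y,s'))\le 2^{n}C(s')^{n}$ for \emph{every} ball $B(y,s')$ (if $B(y,s')\cap\spt\mu\neq\emptyset$ replace $B(y,s')$ by $B(y',2s')$ for a point $y'$ of the intersection, otherwise the mass is $0$); hence $\nu(B(z,s))\le \frac{C}{\theta}\,(\diam B(z,s))^{n}$, giving (B) with $C_{0}:=C/\theta$. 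Finally, since multiplying a measure by $\theta^{-1}$ multiplies $\cM_{\cK^{2}}$ by $\theta^{-(n+2)}$, combining this with \eqref{e:s2} gives
$$
\cM_{\cK^{2}}(\nu)=\frac{1}{\theta^{n+2}}\cdot\frac{\cM_{\cK^{2}}(\mu\restr B(x,r))}{r^{n}}.
$$

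I would then simply \emph{define} $\eta_{1}:=\theta^{n+2}\,\eta(\cK,n,m,C/\theta)$, with $\eta$ the constant from Theorem \ref{t:maingraph}; this is a function of $\cK,n,m,\theta,C$ only, as required. If \eqref{e:s3} holds then the displayed identity gives $\cM_{\cK^{2}}(\nu)\le\eta(\cK,n,m,C_{0})$, so Theorem \ref{t:maingraph} provides a Lipschitz $\tilde f\colon\R^{n}\to\R^{m-n}$ with $\lip(\tilde f)\le\Lambda_{0}:=\Lambda(\cK,n,m,C/\theta)$ (again a function of the same parameters as $\eta_{1}$) and a rotated copy $\tilde\Gamma$ of its graph with $\nu(\R^{m}\setminus\tilde\Gamma)<\tfrac{1}{100}\nu(\R^{m})$. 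To finish I would undo the normalization. Put $\Gamma:=r\tilde\Gamma+x$: dilating and translating $\operatorname{graph}(\tilde f)$ produces the graph of $w\mapsto r\tilde f(w/r)$, which has the same Lipschitz constant, so $\Gamma$ is a rotation followed by a translation of the graph of a Lipschitz function of constant $\le\Lambda_{0}$. Moreover $(\mu_{x,r}\restr B(0,1))(A)=r^{-n}\mu\bigl((rA+x)\cap B(x,r)\bigr)$, and the affine bijection $z\mapsto (z-x)/r$ sends $\R^{m}\setminus\Gamma$ to $\R^{m}\setminus\tilde\Gamma$, so the inequality $\nu(\R^{m}\setminus\tilde\Gamma)<\tfrac{1}{100}\nu(\R^{m})$ becomes exactly $\mu(B(x,r)\setminus\Gamma)<\tfrac{1}{100}\mu(B(x,r))$, i.e.\ \eqref{e:s4}. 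The ``moreover'' clause follows from the analogous clause of Theorem \ref{t:maingraph}: with $\cK$ and $C_{0}=C/\theta$ fixed, $\lip(\tilde f)\to0$ as $\cM_{\cK^{2}}(\nu)\to0$, and by the displayed identity this is equivalent to $\cM_{\cK^{2}}(\mu\restr B(x,r))/r^{n}\to0$.

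All the mathematical content sits in Theorem \ref{t:maingraph}; what remains is bookkeeping: tracking the scaling factors correctly through Proposition \ref{p:MCTM}, checking the hypotheses for $\nu$, and noting that a dilation does not change the Lipschitz constant of a graph. The one step deserving a moment's care is the bound $\mu(B(y,s'))\le 2^{n}C(s')^{n}$ for balls \emph{not} centered on $\spt\mu$, since Definition \ref{d:ADreg} posits upper regularity only at points of the support; but this is the routine covering argument indicated above. I therefore do not anticipate any genuine obstacle.
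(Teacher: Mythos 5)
Your proposal is correct and follows essentially the same route as the paper: normalize via $\nu=\Theta^{n}(\mu,x,r)^{-1}\mu_{x,r}\restr B(0,1)$, verify hypotheses (A)--(C) of Theorem \ref{t:maingraph} with $C_{0}=C/\Theta^{n}(\mu,x,r)$, apply that theorem, and undo the dilation. You are in fact slightly more careful than the paper's own write-up: you observe that the upper-regularity bound in Definition \ref{d:ADreg} is stated only for balls centered on $\spt\mu$ and patch this with the standard factor-of-$2^{n}$ covering remark, a step the paper's verification of (2) silently elides.
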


An immediate corollary to Theorem \ref{t:s1} is

\begin{corollary} \label{c:s1}
Let $\mu$ be an $n$-Ahlfors regular Radon measure with lower-regularity constant $c$, and upper-regularity constant $C$. If $\cK$ is a $(\mu,2)$-proper integrand and \eqref{e:s3} is satisfied with $\eta_{1} = \eta_{1} (\cK, n,m, c,C)$ for all $x \in \spt(\mu)$ and all $0 < r < \diam( \spt(\mu))$ then $\mu$ is uniformly $n$-rectifiable.
\end{corollary}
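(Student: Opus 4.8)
The plan is to deduce Corollary~\ref{c:s1} from Theorem~\ref{t:s1} together with the scaling identity~\eqref{e:s2} from Proposition~\ref{p:MCTM}, the local graph approximation provided by Theorem~\ref{t:s1}, and the characterization of uniform rectifiability recorded in Definition~\ref{d:ur}. The point is that Theorem~\ref{t:s1} already delivers, at every ball on which the normalized curvature is below a threshold, a Lipschitz graph approximating a fixed fraction of the mass; to run this uniformly we need to check that the threshold $\eta_1$ can be chosen \emph{independently of $x$ and $r$}, which is where $n$-Ahlfors regularity (as opposed to mere upper-regularity) enters.

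First I would observe that since $\mu$ is $n$-Ahlfors regular with lower constant $c$ and upper constant $C$, we have $c \le \Theta^n(\mu,x,r) \le C$ for every $x \in \spt(\mu)$ and every $0 < r < \diam(\spt(\mu))$. In Theorem~\ref{t:s1} the quantity $\eta_1$ depends on $\cK, n, m, C$ and on $\Theta^n(\mu,x,r)$; tracing back into Theorem~\ref{t:maingraph}, the dependence on the density ratio is monotone in a harmless way (it enters only through how much one must rescale to arrange hypothesis (A), i.e. $\mu(B(0,2)) \ge 1$, after the normalization $\mu_{x,r}$). Consequently, using $\Theta^n(\mu,x,r) \in [c,C]$, one can replace the $x,r$-dependent $\eta_1$ by a single positive constant $\eta_1 = \eta_1(\cK,n,m,c,C) > 0$ that works simultaneously for all admissible $x$ and $r$; this is exactly the hypothesis stated in the corollary. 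With that uniform $\eta_1$ in hand, \eqref{e:s3} holding for all $x \in \spt(\mu)$ and all $0 < r < \diam(\spt(\mu))$ yields, via Theorem~\ref{t:s1}, a Lipschitz graph $\Gamma_{x,r}$ with Lipschitz constant bounded by a uniform $\Lambda = \Lambda(\eta_1)$ and with $\mu(B(x,r) \setminus \Gamma_{x,r}) < \tfrac{1}{100}\mu(B(x,r))$.

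Next I would convert the Lipschitz-graph statement into the Lipschitz-image-of-a-ball statement appearing in Definition~\ref{d:ur}. A Lipschitz graph over an $n$-plane, intersected with $B(x,r)$, is the image of an $n$-dimensional ball of radius comparable to $r$ (with comparability constant depending only on $\Lambda$ and $m$) under a Lipschitz map; composing with an affine rescaling of the domain gives a Lipschitz map $f_{x,r}: B^n(0,r) \to \R^m$ whose image contains $\Gamma_{x,r} \cap B(x,r)$, at the cost of enlarging the Lipschitz constant by a factor depending only on $\Lambda$ and $m$. Then
$$
\mu\big(B(x,r) \setminus f_{x,r}(B^n(0,r))\big) \le \mu\big(B(x,r) \setminus \Gamma_{x,r}\big) < \tfrac{1}{100}\mu(B(x,r)),
$$
so Definition~\ref{d:ur} is satisfied with $\theta = \tfrac{1}{100}$ and with the uniform Lipschitz bound. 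Since $\mu$ is also $n$-Ahlfors regular by hypothesis, this is precisely the definition of uniform $n$-rectifiability, completing the proof.

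The main obstacle I anticipate is the bookkeeping in the first step: verifying that the $\Theta^n(\mu,x,r)$-dependence of $\eta_1$ in Theorem~\ref{t:s1} is genuinely controlled (uniformly bounded below) once the density ratio is pinned into the compact interval $[c,C]$, rather than merely being "some positive number for each $(x,r)$." Concretely one must revisit how $\mu_{x,r}\restr B(0,1)$ is further dilated to meet hypothesis (A) of Theorem~\ref{t:maingraph} — the dilation factor is $\asymp \Theta^n(\mu,x,r)^{-1}$ — and confirm that the resulting change in $\cM_{\cK^2}$ (governed by the homogeneity exponent $n(n+1)$ in~\eqref{e:s1}) produces a threshold bounded below uniformly over $\Theta^n(\mu,x,r) \in [c,C]$. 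Everything after that — the scaling identity~\eqref{e:s2}, the passage from graphs to Lipschitz images, and the invocation of Definition~\ref{d:ur} — is routine.
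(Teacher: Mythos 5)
Your proposal is correct and follows the same route the paper intends: Ahlfors regularity pins $\Theta^n(\mu,x,r)$ into the compact interval $[c,C]$, which makes the threshold $\eta_1$ from Theorem~\ref{t:s1} (which, via the proof, is roughly $\Theta^n(\mu,x,r)^{n+2}\eta(\cK,n,m,C/\Theta^n(\mu,x,r))$) bounded below by a constant depending only on $\cK,n,m,c,C$, and then the Lipschitz graph at every ball is converted to the Lipschitz image of $B^n(0,r)$ required by Definition~\ref{d:ur}. The paper treats this as immediate from Theorem~\ref{t:s1} and gives no separate proof; your write-up just makes explicit the two small pieces of bookkeeping (uniform lower bound on $\eta_1$, graph-to-parametrization) that justify that word.
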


\begin{proof} (of Theorem \ref{t:s1}). We claim that
$\displaystyle{\nu = \frac{\mu_{x,r} \restr B(0,1)}{\Theta^{n}(\mu,x,r) }}$ satisfies 
\begin{enumerate}
\item $\displaystyle \nu(B(0,1)) \ge 1$ and $\displaystyle \nu(\R^{m} \setminus B(0,1)) = 0$.
\item $\displaystyle \nu(B(y,t)) \le \frac{C}{c}t^{n}$
\item 
$$
\cM_{\cK^{2}}(\nu) = \frac{\cM_{\cK^{2}} \left( \mu \restr B(x,r) \right)}{r^{n} c^{n+2}}.
$$
\end{enumerate}

Indeed, (1) follows since $\nu(B((0,1))) = \frac{1}{\Theta^{n}(\mu,x,r)} \frac{ \mu(B(x,r))}{r^{n}} = \frac{ \Theta^{n}(\mu,x,r)}{\Theta^{n}(\mu,x,r)} = 1$ and $\nu$ is the restriction of a measure to $B(0,1)$.

To see (2) follows, consider $y \in B(0,1)$ and $t >0$. Then, 
\begin{align*}
\nu(B(y,t)) &= \frac{1}{\Theta^{n}(\mu,x,r)} \frac{\mu \left( B(x+ry,rt) \cap B(x,r) \right)}{r^{n}} \\
& \le \frac{1}{\Theta^{n}(\mu,x,r)} \frac{ \mu(B(x+ry, rt))}{r^{n}} \le \frac{1}{\Theta^{n}(\mu,x,r)} \frac{ C (rt)^{n}}{r^{n}} \\
& \le \frac{C}{\Theta^{n}(\mu,x,r)} t^{n}.
\end{align*}

Finally, (3) follows since \eqref{e:s2} in Proposition \ref{p:MCTM} guarantees 
$$
\cM_{\cK^{2}} \left( \mu_{x,r} \restr B(0,1) \right) = \frac{ \cM_{\cK^{2}}(\mu \restr B(x,r))}{r^{n}}
$$
then applying \eqref{e:s1} with $\lambda = \Theta^{n}(\mu,x,r)^{-1}$, $a =1$ yields
\begin{align*}
\cM_{\cK^{2}} (\nu) &= \cM_{\cK^{2}}( \Theta^{n}(\mu,x,r)^{-1} \mu_{x,r} \restr B(0,1)) = \Theta^{n}(\mu,x,r)^{-(n+2)} \cM_{\cK^{2}} \left( \mu_{x,r} \restr B(0,1) \right) \\
& = \Theta^{n}(\mu,x,r)^{-(n+2)} \frac{ \cM_{\cK^{2}}(\mu \restr B(x,r))}{r^{n}}
\end{align*}

Consequently, if $\eta_{1} \left( \cK, m,n, \Theta^{n}(\mu,x,r),C \right)$ is chosen so that it is at most as large as $\Theta^{n}(\mu,x,r)^{n+2} \eta( \cK, m,n, \frac{C}{c})$ where $\eta(\cK,m,n\frac{C}{c})$ is as in Theorem \ref{t:maingraph}, then $\nu$ satisfies the hypothesis of Theorem \ref{t:maingraph}. Hence, there exists a Lipschitz graph $\tilde{\Gamma}$ with $\nu( \R^{m} \setminus \tilde{\Gamma}) < \frac{1}{100} \nu(\R^{m})$. But $\frac{\nu(\R^{m} \setminus \tilde{\Gamma})}{\nu(\R^{m})} = \frac{\mu(B(x,r) \setminus \Gamma)}{\mu(B(x,r))}$ where $\Gamma = r\Tilde{\Gamma} + x$. In particular, $\mu(B(x,r) \setminus \Gamma) < \frac{1}{100} \mu(B(x,r))$ as desired.
\end{proof}

\begin{remark}
Note that in the case $\cK = \cK_{1}$ (along with several other specific integrands studied in \cite{lerman2009high}, \cite{lerman2011high}) Corollary \ref{c:s1} is already known from the work of Lerman and Whitehouse. In fact, the hypothesis in Theorem \ref{t:s1} is stronger than theirs, because Theorem \ref{t:s1} requires not only a Carleson-type bound on the local integral Menger curvature, but that the bound be by a small constant. 

To create an effective theory of quantitative, albeit non-uniform rectifiability, it would be interesting to address this seemingly unnecessary ``smallness" condition being imposed by $\eta_{1}$ in \eqref{e:s3}. It would also likely be useful to allow $\Gamma$ to only satisfy $\mu(B(x,r) \setminus \Gamma) \le (1 - \epsilon) \mu(B(x,r))$ for some $\epsilon = \epsilon(\eta,m,n,\cK,C, \Theta^{n}(\mu,x,r))$.
\end{remark}


\subsection{Rectifiability from integral Menger curvature}

The goal of this section is to prove Theorem \ref{t:radonRect} included below for completeness. 

\begin{theorem} \label{t:mainrect}
If $\mu$ is a Radon measure on $\R^{m}$ with $0 < \Theta^{n,*}(\mu,x) < \infty$ for $\mu$ almost every $x \in \R^{m}$ and $\cM_{\cK^{2}}(\mu) < \infty$ for some $(\mu,2)$-proper integrand $\cK$, then $\mu$ is countably $n$-rectifiable.
\end{theorem}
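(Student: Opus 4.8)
The plan is to deduce the theorem from Meurer's rectifiability result for Hausdorff restrictions (Theorem~\ref{t:meurerRect}), whose engine is the graph approximation Theorem~\ref{t:maingraph}; the only new input is a density comparison converting the pointwise bound $\Theta^{n,*}(\mu,x)>0$ into a global domination of a suitable Hausdorff restriction by $\mu$. First, $\Theta^{n,*}(\mu,x)<\infty$ for $\mu$-a.e.\ $x$ gives $\mu\ll\cH^n$ (Definition~\ref{d:densities}); hence if a Borel set carries $\mu$ modulo a $\mu$-null set and is countably $n$-rectifiable, so is $\mu$. For $N\in\N$ let $A_N=\{x\in\R^m:\Theta^{n,*}(\mu,x)\ge N^{-1}\}$, a Borel set; these increase with $N$ to a set of full $\mu$-measure by hypothesis, so it suffices to prove that each $A_N$ is countably $n$-rectifiable.

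Fix $N$. Since $\Theta^{n,*}(\mu,x)\ge N^{-1}$ on $A_N$, the classical density comparison theorem (see, e.g., \cite[Theorem 6.9]{mattila1999geometry}) provides a dimensional constant $c_{m,n}$ with $\cH^n(B)\le c_{m,n}N\,\mu(B)$ for every Borel $B\subseteq A_N$, that is, $\cH^n\restr A_N\le c_{m,n}N\,\mu$ as measures. Because $\cM_{\cK^2}$ is monotone in the measure and satisfies $\cM_{\cK^2}(c\,\sigma)=c^{n+2}\cM_{\cK^2}(\sigma)$ (both immediate from the definition and from the behaviour of product measures), this yields $\cM_{\cK^2}(\cH^n\restr A_N)\le (c_{m,n}N)^{n+2}\cM_{\cK^2}(\mu)<\infty$. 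Moreover $\cK$ is still a $(\cH^n\restr A_N,2)$-proper integrand: properties (2)--(4) of Definition~\ref{d:propint} do not refer to the measure, and property (1) transfers since $\cH^n\restr A_N\ll\mu$ forces $(\cH^n\restr A_N)^{n+2}\ll\mu^{n+2}$, so the $\mu^{n+2}$-measurable function $\cK$ is also $(\cH^n\restr A_N)^{n+2}$-measurable. Theorem~\ref{t:meurerRect} applied with $E=A_N$ now gives that $A_N$ is countably $n$-rectifiable; collecting the Lipschitz images over all $N$ and discarding the $\mu$-null set $\R^m\setminus\bigcup_N A_N$ shows, using $\mu\ll\cH^n$ once more, that $\mu$ is countably $n$-rectifiable.

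The reduction above is elementary, so the real content sits inside Theorem~\ref{t:meurerRect}. If one prefers to keep the argument self-contained and invoke Theorem~\ref{t:maingraph} directly---as the placement of that theorem in this section suggests---then after the reduction one further restricts to $\cH^n\restr(A_N\cap B(0,R_0))$ (which is $n$-Ahlfors upper regular with constant depending on $N$ and $R_0$) and re-runs Meurer's proof, feeding in Theorem~\ref{t:maingraph} or its scaled form Theorem~\ref{t:s1}. The main obstacle in that route, and the crux of both \cite{leger1999menger} and \cite{meurer2015fintegral}, is that Theorem~\ref{t:maingraph} returns a large piece of a Lipschitz graph only after the total Menger curvature of the rescaled measure on a ball has been driven below the fixed threshold $\eta$; this smallness is not a consequence of merely passing to small scales---at a typical point the scale-invariant quantity $r^{-n}\cM_{\cK^2}(\mu\restr B(x,r))$ stays comparable to a finite but generally non-small curvature density---and must instead be produced by a stopping-time (corona-type) decomposition.
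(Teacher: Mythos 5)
Your proposal is correct, and it takes a genuinely different route from the paper. The paper proves the contrapositive (Lemma \ref{l:54}): it invokes the decomposition $\mu = \mu_r + \mu_u$ into rectifiable and purely unrectifiable parts (Lemma \ref{l:RandURparts}), restricts $\mu_u$ to a set of two-sidedly bounded upper density (Lemma \ref{l:step0}), then re-runs a L\'eger/Meurer-style stopping-time argument directly on the Radon measure (Lemma \ref{l:53}) to manufacture a compact $E^*$ with upper Ahlfors regularity, substantial diameter, and Menger curvature $\le \zeta_0 (\diam E^*)^n$; after rescaling, $E^*$ meets the hypothesis of the graph Theorem \ref{t:maingraph}, contradicting pure unrectifiability. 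You instead pass from the Radon-measure theorem to the Hausdorff-restriction theorem: the classical upper-density comparison gives $\cH^n \restr A_N \le c_{m,n} N\, \mu$, the $(n+2)$-homogeneity and monotonicity of $\cM_{\cK^2}$ propagate the curvature bound to $\cH^n\restr A_N$, and Meurer's Theorem \ref{t:meurerRect} is applied as a black box. Your route is shorter and avoids repeating the Vitali/stopping-time machinery (the paper, in effect, re-derives inside Lemma \ref{l:53} what you correctly identify in your final paragraph as the crux hidden in Meurer's proof), whereas the paper's route is self-contained relative to Theorem \ref{t:maingraph}. One small remark: for the specific integrands $\cK_1,\cK_2$ (which are Borel) the measurability transfer for condition (1) of Definition \ref{d:propint} is automatic; your absolute-continuity argument is the right thing for general $(\mu,2)$-proper integrands. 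Both proofs are valid; yours is arguably the more economical given that Meurer's theorem is already available as a citation.
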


The proof could be separated into two parts. The first part is a sequence of arguments to show that Radon measures which are mutually absolutely continuous with respect to the Hausdorff measure behave sufficiently similar the Hausdorff measure restricted to sets. The second part follows an argument from Sections 1 and 2 of \cite{leger1999menger} and is also similar to the case for sets as presented in \cite{meurer2018integral}. Several preparatory lemmas are required.

\begin{lemma} \label{l:step0}
Let $\mu$ be a Radon measure on $\R^{m}$ with $0 < \Theta^{n,*}(\mu,x) < \infty$ for $\mu$ almost every $x \in \R^{m}$. Then there exists some Borel set $E$ such that $\nu = \mu \restr E$ satisfies  $\nu(\R^{m}) \ge \frac{1}{2} \mu(\R^{m})$ and 
$$
0 < c \le \Theta^{n,*}(\nu, x) \le C < \infty \text{ for $\nu$ almost every $x \in \R^{m}$.}
$$
\end{lemma}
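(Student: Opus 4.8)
The plan is to extract a "good" piece of $\mu$ by a standard exhaustion/partitioning argument that quantizes the upper density. For each pair of positive rationals $q < Q$ let
$$
A_{q,Q} = \{ x \in \R^m : q \le \Theta^{n,*}(\mu,x) \le Q \}.
$$
Since $0 < \Theta^{n,*}(\mu,x) < \infty$ for $\mu$-a.e. $x$, the countably many sets $A_{q,Q}$ cover $\mu$-almost all of $\R^m$, hence $\mu(\R^m) = \lim \mu\big(\bigcup_{q<Q \le N} A_{q,Q}\big)$ where the union is over the finitely many rationals with denominators and values at most $N$; choosing $N$ large and then picking a single pair $(q,Q)$ from this finite collection that carries the largest share, we obtain rationals $0 < q < Q < \infty$ with $\mu(A_{q,Q}) \ge \frac{1}{2}\mu(\R^m)$ (after relabelling, with a constant better than $1/2$ absorbed by taking the collection large enough — any fixed fraction $<1$ works, and $1/2$ certainly does for $N$ large). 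One technical point: $A_{q,Q}$ need not be Borel a priori, but $x \mapsto \Theta^{n,*}(\mu,x)$ is a Borel function for any Radon measure $\mu$ (it is a countable-liminf-of-continuous-in-$r$ construction, cf. \cite{mattila1999geometry}), so each $A_{q,Q}$ is Borel; set $E = A_{q,Q}$ and $\nu = \mu \restr E$.

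It remains to check $0 < c \le \Theta^{n,*}(\nu,x) \le C < \infty$ for $\nu$-a.e. $x$. The upper bound is immediate: $\nu \le \mu$ as measures, so $\Theta^{n,*}(\nu,x) \le \Theta^{n,*}(\mu,x) \le Q =: C$ for every $x \in E$, hence for $\nu$-a.e. $x$. The lower bound is the only place where a genuine argument is needed. For $\nu$-a.e. $x$ we want $\Theta^{n,*}(\nu,x) \ge q/2 =: c$ (say). This is where the structure of $\nu$ as a \emph{restriction} of $\mu$ to $E$ enters: by a density theorem for restrictions (e.g. \cite[2.9.11]{mattila1999geometry}, or the standard fact that $\mu\restr E$ has full $n$-dimensional upper density relative to $\mu$ at $\mu\restr E$-a.e. point), one has
$$
\limsup_{r \to 0} \frac{\mu(B(x,r) \cap E)}{\mu(B(x,r))} = 1 \quad \text{for } \mu\restr E\text{-a.e. } x,
$$
equivalently $\limsup_{r\to 0} \mu(B(x,r)\setminus E)/\mu(B(x,r)) = 0$. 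Combining this with $\Theta^{n,*}(\mu,x) \ge q$ on $E$: along a sequence $r_j \to 0$ realizing the upper density of $\mu$ at $x$, we have $\mu(B(x,r_j))/r_j^n \to \Theta^{n,*}(\mu,x) \ge q$, and passing to a further subsequence on which the density of $E$ in $B(x,r_j)$ tends to $1$, we get $\nu(B(x,r_j))/r_j^n = \mu(B(x,r_j)\cap E)/r_j^n = \big(\mu(B(x,r_j)\cap E)/\mu(B(x,r_j))\big)\cdot \mu(B(x,r_j))/r_j^n \to \Theta^{n,*}(\mu,x) \ge q$. Hence $\Theta^{n,*}(\nu,x) \ge q > 0$ for $\nu$-a.e. $x$.

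The main obstacle is the lower-density claim, and specifically justifying the Lebesgue-type differentiation statement $\mu(B(x,r)\cap E)/\mu(B(x,r)) \to 1$ for $\mu\restr E$-a.e. $x$; everything else is bookkeeping. This differentiation statement holds for any Radon measure $\mu$ on $\R^m$ and any $\mu$-measurable $E$ via the Besicovitch covering theorem (it is the one-sided density statement underlying the Lebesgue differentiation theorem for Radon measures — cf. \cite[Corollary 2.14]{mattila1999geometry}), so no regularity or doubling hypothesis on $\mu$ is needed. The only care required is to take the $\limsup$ over a common subsequence in the two limits above, which is fine since on any subsequence of the density-realizing sequence $r_j$ the $E$-density still tends to $1$ for $\mu\restr E$-a.e. $x$. (Strictly, one applies the differentiation statement after discarding a $\nu$-null set; the resulting $c$ can be taken to be $q$ itself, not merely $q/2$, by using the full $\limsup=1$.)
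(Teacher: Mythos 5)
Your proof is correct and follows essentially the same route as the paper: slice $\R^m$ into level sets of the upper density, choose one carrying at least half the mass, and then use Lebesgue--Besicovitch differentiation of the ratio $\nu(B(x,r))/\mu(B(x,r))\to 1$ to transfer the density bounds from $\mu$ to $\nu=\mu\restr E$. The paper uses the explicitly nested dyadic brackets $E_j=\{2^{-j}\le\Theta^{n,*}(\mu,x)\le 2^j\}$ where yours uses rational brackets, but this is cosmetic.

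Two small imprecisions worth flagging. First, your justification for finding a single pair $(q,Q)$ with $\mu(A_{q,Q})\ge\tfrac12\mu(\R^m)$ --- ``pick the one carrying the largest share'' and ``absorb a constant by taking the collection large'' --- is the wrong reason: if the $A_{q,Q}$ were disjoint, the largest of $K$ of them could carry only a $1/K$ fraction no matter how large $N$ is. The conclusion is saved because the $A_{q,Q}$ are nested upward (if $q'\le q<Q\le Q'$ then $A_{q,Q}\subseteq A_{q',Q'}$), so the union over the finite collection \emph{is itself} (contained in) a single $A_{1/N,N}$; this is exactly what the paper's dyadic choice makes transparent. Second, you implicitly assume $\mu(\R^m)<\infty$ in the exhaustion step (otherwise ``$\ge\tfrac12\mu(\R^m)$'' and the limit argument need care); the paper disposes of this with a one-line reduction to disjoint annuli. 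Neither point affects the substance, and your treatment of the differentiation step (including the remark that one gets $c=q$ rather than $q/2$ because the ratio has a genuine limit $1$) matches the paper's reasoning.
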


\begin{proof}
Without loss of generality, suppose $\mu(\R^{m}) < \infty$. Otherwise, apply the finite case to a family of disjoint annulli that exhaust $\R^{m}$. Moreover, suppose $\mu(\R^{m}) > 0$ to avoid trivialities.

Define $E_{j}  = \{ x \in \R^{m} : 2^{-j} \le \Theta^{n,*}(\mu,x) \le 2^{j} \}$. Then  $E_{j}^{c} \supset E_{j+1}^{c}$ for all $j$ and moreover 
$$
\bigcap_{j=1}^{\infty} E_{j}^{c} = \{ x \in \R^{m} : 0 = \Theta^{n,*}(\mu,x) \text{ or } \Theta^{n,*}(\mu,x) = + \infty \}.
$$
Since $\mu(E_{1}^{c}) \le \mu(\R^{m}) < \infty$ it follows that
$$
\lim_{j \to \infty} \mu(E_{j}^{c}) = \mu \left( \bigcap_{j=1}^{\infty} E_{j}^{c} \right) = 0
$$
so there exists $k$ with $\mu(E_{k}) \ge \frac{1}{2} \mu(\R^{m})$. Fix such $k$ and let $c = 2^{-k}$ and $C = 2^{k}$. Then, define $\nu = \mu \restr E_{k}$. Since $E_{k}$ is $\mu$-measurable, it follows $\nu$ is Radon and $\nu(\R^{m}) \ge \frac{1}{2} \mu(\R^{m})$. On the other hand the Lebesgue-Besicovitch differentiation theorem ensures $\Theta^{n,*}(\nu,x) = \Theta^{n,*}(\mu,x)$ for $\nu$ a.e. $x \in \R^{m}$ since $\frac{ \nu(B(x,r))}{\mu(B(x,r))} \xrightarrow{r \downarrow 0} 1$ for $\nu$ a.e. $x \in \R^{m}$. Consequently $2^{-k} \le  \Theta^{n,*}(\nu, x) \le 2^{k}$ for $\nu$ a.e. $x \in \R^{m}$ as desired.
\end{proof}

\begin{lemma} \label{l:53}
If $\mu$ is a Radon measure with $0 < c \le \Theta^{n,*}(\mu,x) \le C < \infty$ for $\mu$ a.e. $x \in \R^{m}$ and $\spt(\mu)$ is bounded and $\cM_{\cK^{p}}(\mu) < \infty$. Then, for all $\zeta_{0} > 0$ there exists a compact set $E^{*} \subset \spt(\mu)$ with
\begin{enumerate}
\item[(i)] $\mu(E^{*}) \ge \frac{c}{2^{n+2}} \left( \diam E^{*} \right)^{n}$
\item[(ii)] For all $x \in E^{*}$ and all $t > 0$, $\mu \left( E^{*} \cap B(x,t) \right) \le 2 C t^{n}$.
\item[(iii)] $\cM_{\cK^{p}}(\mu \restr E^{*}) \le \zeta_{0} \left( \diam E^{*} \right)^{n}$
\end{enumerate}
\end{lemma}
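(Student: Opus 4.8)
\emph{Proof strategy.} The plan is to produce $E^*$ as a compact subset of a single tiny closed ball $\overline{B(x_0,d)}$, chosen around a point $x_0$ selected so that all three conclusions can be arranged in turn by intersecting with progressively smaller measurable sets. Since $\spt\mu$ is bounded and $\mu$ is Radon we have $\mu(\R^m)<\infty$, and we may assume $\mu(\R^m)>0$. First I would uniformize the upper density: from $\Theta^{n,*}(\mu,x)\le C$ for $\mu$-a.e.\ $x$, an Egorov-type argument yields a radius $r_0>0$ and a $\mu$-measurable set $A$ with $\mu(A)\ge\frac34\mu(\R^m)$ such that $\mu(B(x,t))\le 2Ct^n$ whenever $x\in A$ and $0<t\le r_0$. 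This set $A$ will be responsible for (ii): once $E^*\subset A$ with $\diam E^*\le r_0$, for $0<t\le\diam E^*$ one bounds $\mu(E^*\cap B(y,t))\le\mu(B(y,t))\le 2Ct^n$, and for $t\ge\diam E^*$ one has $\mu(E^*\cap B(y,t))=\mu(E^*)\le\mu(B(y,\diam E^*))\le 2C(\diam E^*)^n\le 2Ct^n$; the same estimate $\mu(E^*)\le 2C(\diam E^*)^n$ will be reused for (iii).

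\emph{Localizing the curvature.} For $r>0$ set $g_r(x)=\int_{B(x,r)^{n+1}}\cK^p(x,x_1,\dots,x_{n+1})\,d\mu^{n+1}(x_1,\dots,x_{n+1})$ and $m(r)=\int_A g_r\,d\mu$. Because $\Theta^{n,*}(\mu,\cdot)<\infty$ $\mu$-a.e., $\mu$ has no atoms (an atom would force $\Theta^{n,*}=\infty$ there, contradicting the a.e.\ bound), so $\mu^{n+2}$ assigns measure zero to the set where all $n+2$ coordinates coincide; since $\cK^p\in L^1(\mu^{n+2})$, monotone/dominated convergence gives $m(r)\to 0$ as $r\downarrow 0$. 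Given the target $\zeta_0$ (shrunk in advance by the harmless factor $2C$ coming from (ii)), I would choose $\rho_1\le r_0/2$ small enough that $m(4\rho_1)<\frac{\zeta_0}{2}\mu(A)$ and set $D=\{x\in A:g_{4\rho_1}(x)\le\zeta_0\}$; by Chebyshev, $\mu(D)\ge\frac12\mu(A)>0$. The point of $D$ is that for every $z\in D$ and every $F\subset\overline{B(z,2\rho_1)}$ one has $\int_{F^{n+1}}\cK^p(z,\cdot)\,d\mu^{n+1}\le g_{4\rho_1}(z)\le\zeta_0$.

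\emph{Extracting $E^*$ inside $D$.} Now pick $x_0\in D$ that is simultaneously a $\mu$-density point of $D$ and a point with $\Theta^{n,*}(\mu,x_0)\ge c$; $\mu$-a.e.\ point of $D$ satisfies both, and $\mu(D)>0$. Using $\limsup_{\rho\to 0}\mu(B(x_0,\rho))/\rho^n\ge c$ together with the density-point property, fix a single radius $d\le\rho_1$ with $\mu(B(x_0,d))\ge\frac{c}{2}d^n$ and $\mu(D\cap B(x_0,d))\ge(1-\varepsilon)\mu(B(x_0,d))$, $\varepsilon$ small. Finally let $E^*$ be a compact subset of $D\cap\overline{B(x_0,d)}$ with $\mu(E^*)\ge(1-\varepsilon)\mu(D\cap\overline{B(x_0,d)})$, using inner regularity of $\mu$. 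Then $E^*\subset A$ and $\diam E^*\le 2d\le r_0$, so (ii) holds by Step 1; $\mu(E^*)\ge(1-\varepsilon)^2\frac{c}{2}d^n\ge\frac{c}{4}d^n\ge\frac{c}{2^{n+2}}(\diam E^*)^n$, which is (i); and since $\overline{B(x_0,d)}\subset\overline{B(z,2\rho_1)}$ for every $z\in E^*\subset D$, Tonelli's theorem gives $\cM_{\cK^p}(\mu\restr E^*)=\int_{E^*}\bigl(\int_{(E^*)^{n+1}}\cK^p(z,\cdot)\,d\mu^{n+1}\bigr)d\mu(z)\le\zeta_0\,\mu(E^*)\le 2C\zeta_0(\diam E^*)^n$, which is (iii) after the initial rescaling of $\zeta_0$.

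\emph{The main obstacle.} The only genuinely delicate point is (iii); (i) and (ii) are the usual density-point manipulations. One cannot simply select a density-good scale $d$ with $\cM_{\cK^p}(\mu\restr\overline{B(x_0,d)})\le\zeta_0 d^n$ by a soft averaging argument, because the integral $\int_0^{r_0}\cM_{\cK^p}(\mu\restr B(x,r))\,r^{-n}\,\frac{dr}{r}$ is only logarithmically divergent for $n$-dimensional-type measures, so finiteness of the total Menger curvature never forces the \emph{ratio} $\cM_{\cK^p}(\mu\restr B(x,r))/r^n$ to be small along any sequence of $r$. The resolution is precisely the decoupling above: first isolate, via $m(r)\to 0$ and Chebyshev, a set $D$ of positive measure on which the curvature \emph{localized to one fixed small scale} $4\rho_1$ is pointwise at most $\zeta_0$, and only afterwards perform the density-point and inner-regularity extraction \emph{within} $D$, so that $E^*$ inherits the curvature bound automatically. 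Checking the measurability of $A$, $D$, and $g_r$, the convergence $m(r)\to0$, and the bookkeeping of constants in (i)--(iii) is then routine.
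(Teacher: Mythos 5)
Your proof is correct, and it reaches the lemma by a genuinely different route from the paper's. Both arguments begin identically: use $\Theta^{n,*}(\mu,\cdot)\le C$ a.e.\ to pass to a subset of comparable measure on which $\mu(B(x,t))\le 2Ct^{n}$ holds uniformly for all $t$ below a fixed small scale (your $A$, the paper's $E_{\ell}$), and then exploit that the Menger curvature mass near the diagonal of $(\R^{m})^{n+2}$ vanishes as the scale shrinks (the paper's $I(2\tau_{0})\to 0$, your $m(r)\to 0$; both use $\cK^{p}\in L^{1}(\mu^{n+2})$ and, in your phrasing, non-atomicity). After that they diverge. The paper covers $E_{\ell}$ by balls with $\Theta^{n}(\nu,x,\tau)\ge c/2$, extracts a disjoint Besicovitch subfamily $\{B_{i}\}$, and runs a pigeonhole: if every ball carried at least $\tfrac{\zeta}{4}(\diam B_{i}/2)^{n}$ of curvature, the sum would exceed $I(2\tau_{0})$, a contradiction, so a ``good'' ball exists and $E^{*}$ is a compact piece of it. You instead stay pointwise: you form the localized pointwise curvature $g_{r}$, apply Chebyshev to find a positive-measure subset $D$ where $g_{4\rho_{1}}\le\zeta_{0}'$ holds pointwise, and then choose a single center $x_{0}\in D$ that is simultaneously a Lebesgue density point of $D$ and a point with $\Theta^{n,*}(\mu,x_{0})\ge c$; at a sufficiently small $d$ both properties bite, and $E^{*}$ is a compact piece of $D\cap\overline{B(x_{0},d)}$. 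The trade is thus a Besicovitch covering plus pigeonhole over balls (paper) versus Chebyshev on a pointwise functional plus Lebesgue density-point selection (yours); your route avoids the explicit covering bookkeeping at the cost of invoking the differentiation theorem a second time. The Tonelli computation giving $\cM_{\cK^{p}}(\mu\restr E^{*})\le\zeta_{0}'\mu(E^{*})\le 2C\zeta_{0}'(\diam E^{*})^{n}$, and the advance rescaling of $\zeta_{0}$ by $2C$, are sound, as are the verifications of (i) and (ii). One small point to tidy: the statement asks for $E^{*}\subset\spt(\mu)$, which is not literally enforced by taking $E^{*}\subset D\cap\overline{B(x_{0},d)}$; intersecting with the closed full-$\mu$-measure set $\spt\mu$ before applying inner regularity fixes this at no cost.
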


\begin{proof}
Since $\mu$ is Radon, and $\spt(\mu)$ is bounded, it follows that $\mu(\R^{m}) < \infty$. Define
\begin{equation} \label{e:EL} 
E_{\ell} = \{ x \in \R^{m} : \forall t \in (0, 2^{-\ell}), ~ \mu(B(x,t)) \le 2 C t^{n} \}.
\end{equation} 

Evidently, $E_{\ell} \subset E_{\ell+1}$. Moreover, the assumption $c \le \Theta^{n,*}(\mu,x) \le C$ for $\mu$ almost every $x \in \R^{m}$ ensures 
\begin{equation*} 
\mu(E_{\ell}) \xrightarrow{\ell \to \infty} \mu(\R^{m}).
\end{equation*}

Hence, by the finiteness of $\mu(\R^{m})$ there exists some $\ell$ such that $\mu(E_{\ell}) \ge \frac{1}{2} \mu(\R^{m})$. Define $\nu = \mu \restr E_{\ell}$. Then, notably
\begin{equation} \label{e:smallupperreg}
\nu(B(x,r)) \le 2C r^{n} \qquad \forall x \in E_{\ell} \qquad \forall 0 < r \le 2^{- \ell}
\end{equation}
and by Lebesgue-Besicovitch differentiation theorem \cite[Theorem 1.7.1]{gariepy1992measure}, it also follows 
\begin{equation} \label{e:smalllowerreg}
c \le \Theta^{n,*}(\nu,x) \qquad \nu ~ a.e. ~ x \in E_{\ell}
\end{equation}
since $\displaystyle{\frac{ \mu(B(x,r) \cap E_{\ell})}{\mu(B(x,r))} \xrightarrow{r \to 0} 1}$ for $\mu$ a.e. $x \in E_{\ell}$ and by assumption $\displaystyle \limsup_{r \to 0} \frac{ \mu(B(x,r) }{r^{n}} \ge c$ for $\mu$ almost every $x \in \R^{m}$. 

Define 
\begin{equation} \label{e:Atau}
A(\tau) = \left\{ (x_{0}, \dots, x_{n+1}) \in (E_{\ell})^{n+2} : |x_{0} - x_{i}| < \tau ~ \forall i \in \{1, \dots, n+1\} \right\}.
\end{equation}

Claim 1: $\nu^{n+2}(A(\tau)) \xrightarrow{\tau \to 0} 0$.

Indeed, note that $A(\tau) = \bigcup_{x \in E_{\ell}} \{x\} \times (B(x, \tau)\cap E_{\ell})^{n+1}$. In particular \eqref{e:smallupperreg} ensures that  for all $\tau \le 2^{-\ell}$,
$$
\nu^{n+2}(A(\tau)) = \int_{E_{\ell}} \nu^{n+1}(B(x,\tau) \cap E_{\ell}) \dif \nu(x) \le \int_{E_{\ell}} 2C \tau^{n(n+1)} \dif \nu(x) = 2C \tau^{n(n+1)} \nu(E_{\ell}).
$$
Since $\nu$ is finite, the claim follows.

Claim 2: $I(\tau)$ defined in \eqref{e:Itau} satisfies $I(\tau) \xrightarrow{\tau \to 0} 0$.

\begin{equation} \label{e:Itau}
I(\tau) = \int_{A(\tau)} \cK^{p}(x_{0}, \dots, x_{n+1}) \dif \nu^{n+2}(x_{0}, \dots, x_{n+1}) 
\end{equation}

Indeed, we first note, $\cM_{\cK^{p}}(\nu) \le \cM_{\cK^{p}}(\mu) < \infty$ implies $\displaystyle{ \cK^{p} \in L^{1}((\R^{m})^{n+2}, \nu^{n+2})}$. Then write
$$
I(\tau) = \int_{(\R^{m})^{n+2}} \eye_{A(\tau)} \cK^{p} \dif \nu^{n+2} \le \int_{(\R^{m})^{n+2}} \cK^{p} \dif \nu^{n+2}.
$$ 
Consequently, for any sequence of $\tau_{k}$ converging to zero, Claim 1 ensures that the corresponding sequence of functions $\{ \eye_{A(\tau_{k})} \cK^{p}\}$ converges to zero $\nu^{n+2}$ a.e., and is bounded by the $L^{1}$ function $\cK^{p}$. So, the dominated convergence theorem validates Claim 2. Consequently, there exists $\tau_{0}$ such that $0 < 2 \tau_{0} < 2^{-\ell}$ and
\begin{equation} \label{e:s31}
I( 2 \tau_{0}) \le  \frac{\zeta}{16C} \nu(\R^{m}).
\end{equation}

Next, define a cover of $E_{\ell}$ by
\begin{equation} \label{e:s32}
\cG = \left\{ B(x, \tau) : x \in E_{\ell}, ~ 0 < \tau < \tau_{0}, ~ \text{and} ~ \Theta^{n}(\nu,x ,\tau) \ge \frac{c}{2} \right\}.
\end{equation}
In particular, \eqref{e:smalllowerreg} guarantees $\cG$ is a fine cover of some $\nu$-measurable set $E \subset E_{\ell}$ with $\nu$-full measure, i.e., $\nu(E) = \nu(\R^{m})$. So, the corollary to Besicovitch's covering theorem, \cite[Corollary 1.5.2]{gariepy1992measure} ensures that there exists a countable, disjoint subfamily $\{B_{i}\}$ of $\cG$ with 

\begin{equation} \label{e:s33}
\nu(\R^{m} \setminus \bigcup_{i=1}^{\infty} B_{i}) = 0.
\end{equation}

In light of \eqref{e:EL} and \eqref{e:s32}, $\tau < \tau_{0} < 2^{- \ell}$ ensures
\begin{equation*}
\nu(\R^{m}) = \sum_{i=1}^{\infty} \nu(B_{i}) \le \sum_{i=1}^{\infty} (2C) \left( \frac{  \diam B_{i}}{2} \right)^{n}
\end{equation*}
so that
\begin{equation} \label{e:s34}
\frac{\nu( \R^{m})}{2C} \le \sum_{i=1}^{\infty} \left( \frac{ \diam B_{i}}{2} \right)^{n}.
\end{equation}

\noindent Moreover, $(B_{i} \cap E_{\ell})^{n+2} \subset A(2 \tau_{0}) \cap B_{i}$ so, \eqref{e:s31} and \eqref{e:Itau} yields
\begin{equation} \label{e:s35}
\sum_{i=1}^{\infty} \cM_{\cK^{p}} ( \nu \restr B_{i} ) \le I(2 \tau_{0}) \le \frac{\zeta}{16C} \nu(\R^{m}).
\end{equation}

\noindent Define the index set of ``bad" balls, or balls with too much Menger curvature by
\begin{equation} \label{e:s36}
I_{b} = \left\{ i \in \N : \cM_{\cK^{p}}( \nu \restr B_{i} ) \ge \zeta \frac{ \left( \frac{ \diam B_{i}}{2} \right)^{n}}{4} \right\}
\end{equation}

\noindent Then,
\begin{align} \label{e:s37}
\sum_{i \in I_{b}} \cM_{\cK^{p}}( \nu \restr B_{i}) \ge \frac{\zeta}{4} \sum_{i \in I_{b}} \left( \frac{ \diam B_{i}}{2} \right)^{n}
\end{align}

\noindent Notice that if $\sum_{i \in I_{b}} \left( \frac{ \diam B_{i}}{2} \right)^{n} > \frac{ \nu(\R^{m})}{4C}$ then additionally considering \eqref{e:s35} and \eqref{e:s37} implies
$$
\sum_{i \in \N} \cM_{\cK^{p}}(\nu \restr B_{i}) \le \frac{\zeta}{16C} \nu(\R^{m}) <  \frac{\zeta}{4} \sum_{i \in I_{b}} \left( \frac{ \diam B_{i}}{2} \right)^{n} \le \sum_{i \in I_{b}} \cM_{\cK^{p}}(\nu \restr B_{i})
$$
which is a contradiction. It follows 
\begin{equation} \label{e:s38}
\sum_{i \in I_{b}} \left( \frac{ \diam B_{i}}{2} \right)^{n} \le \frac{ \nu(\R^{m})}{4C}.
\end{equation} 
Now, \eqref{e:s34} and \eqref{e:s38} together ensure that $I_{b} \neq \N$.

From now on, fix $i \in \N \setminus I_{b}$. The inner-regularity of Radon measures ensures that there exists some compact $E^{*}$ with 

\begin{equation} \label{e:s310}
E^{*} \subset B_{i} \cap E_{\ell} ~ \text{ and } ~ \nu(E^{*}) \ge \frac{1}{2} \nu(B_{i}).
\end{equation} 
Then evidently, $E^{*}$ satisfies:

\begin{enumerate}
\item $ \nu(E^{*}) \ge \frac{1}{2} \nu(B_{i}) \ge \frac{1}{2} \frac{c}{2} \left( \frac{ \diam B_{i}}{2} \right)^{n} \ge \frac{c}{2^{n+2}} \diam(E^{*})^{n}$, where the second inequality is because $B_{i} \in \cG$, see \eqref{e:s32}.

\item For all $x \in E^{*}$ and for all $0 < t < 2^{-\ell}$ it follows from \eqref{e:EL} and $E^{*} \subseteq E_{\ell}$ that
\begin{equation} \label{e:s39}
\nu \left( E^{*} \cap B(x,t) \right) \le 2 C \left( \frac{ \diam \left( E^{*} \cap B(x,t) \right)}{2} \right)^{n}.
\end{equation}
On the other hand, $E^{*} \subset B_{i}$ and $\diam(B_{i}) \le 2 \tau_{0} < 2^{- \ell}$. So, for $t \ge 2^{-\ell}$ it follows,
$$
\nu \left( E^{*} \cap B(x,t) \right) \le \nu(B_{i}) \le 2C \left( \frac{ \diam B_{i}}{2} \right)^{n} < 2C \left( 2^{-\ell} \right)^{n} \le 2C t^{n}
$$
so that in fact $\nu \restr E^{*}$ is $n$-Ahlfors upper regular with regularity constant $2C$, that is \eqref{e:s39} holds for all $t > 0$.

\item It follows 
\begin{equation}\label{e:s41}
\frac{1}{4} \left( \frac{\diam(B_{i})}{2} \right)^{n}  \le \frac{2C}{c} (\diam E^{*})^{n}.
\end{equation} 
Indeed, choose a ball $B$ with 
\begin{equation} \label{e:s40}
\diam B \le 2 \diam E^{*} ~ \text{ and } ~ E^{*} \subset B.
\end{equation} 
Combining \eqref{e:s32}, \eqref{e:s310}, \eqref{e:s39}, and \eqref{e:s40} yields
$$
\frac{c}{4} \left( \frac{ \diam B_{i}}{2} \right)^{n} \le \frac{\nu(B_{i})}{2} \le \nu(E^{*})  = \nu(E^{*} \cap B) \le 2C \left( \frac{ \diam B}{2} \right)^{n} \le 2C \diam(E^{*})^{n}.
$$
Hence, the lower bound on $\diam E^{*}$ follows.

\item Finally, since $i \in  \N \setminus I_{b}$, \eqref{e:s36} and \eqref{e:s41} yields
$$
\cM_{\cK^{p}}( \nu \restr E^{*}) < \frac{ \zeta}{4} \left( \frac{ \diam B_{i}}{2} \right)^{n} \le  \frac{2C}{c} \zeta \left(  \diam E^{*} \right)^{n}
$$
\end{enumerate}
Choosing $\zeta = \frac{c \zeta_{0}}{2C}$ completes the proof.
\end{proof}

The next technical lemma is a standard ``structure theorem" and is contained in for instance \cite[3.3.12 - 3.3.15]{federer2014geometric}
\begin{lemma} \label{l:RandURparts}
If $\mu$ is a Radon measure with $0 < \Theta^{n,*}(\mu,x)$ for $\mu$ almost every $x$, then one can write $\mu = \mu_{r} + \mu_{u}$ where $\mu_{r} = \mu \restr E$ for some $\mu$-measurable $E$, and $\mu_{r}$ is countably $n$-rectifiable. On the other hand, $\mu_{u}$ is purely unrectifiable.
\end{lemma}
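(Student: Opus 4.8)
The plan is to run the standard maximal-set (exhaustion) argument. Among all $\mu$-measurable sets $A$ for which $\mu\restr A$ is countably $n$-rectifiable, I would extract one, $E$, of maximal $\mu$-measure, set $\mu_{r} = \mu\restr E$ and $\mu_{u} = \mu\restr(\R^{m}\setminus E)$, and then show that $\mu_{u}$ can carry no Lipschitz image of $\R^{n}$, so that it is purely unrectifiable.

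First I would reduce to the case $\mu(\R^{m})<\infty$ exactly as in Lemma \ref{l:step0}, decomposing $\R^{m}$ into countably many annuli of finite $\mu$-mass, treating each separately, and recombining: a countable union of sets carrying countably $n$-rectifiable measures is again contained in a countable union of Lipschitz images, and a countable sum of purely unrectifiable measures is purely unrectifiable, since each summand assigns zero mass to every Lipschitz image of $\R^{n}$. With $\mu$ finite, I would set
$$
\mathcal{F} = \Big\{ A\subset\R^{m} : A \text{ is } \mu\text{-measurable and } \mu\big(A\setminus \textstyle\bigcup_{i} f_{i}(\R^{n})\big) = 0 \text{ for some countable family of Lipschitz } f_{i} : \R^{n}\to\R^{m} \Big\}
$$
and $s = \sup\{\mu(A) : A\in\mathcal{F}\}\le\mu(\R^{m})<\infty$. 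Each $f_{i}(\R^{n})$ is $\sigma$-compact, hence Borel, and $\mathcal{F}$ is closed under countable unions, so choosing $A_{j}\in\mathcal{F}$ with $\mu(A_{1}\cup\cdots\cup A_{j})\uparrow s$ and taking $E = \bigcup_{j} A_{j}$ produces $E\in\mathcal{F}$ with $\mu(E)=s$. Since $\mu\ll\cH^{n}$ in the regime of interest (the finite upper-density hypothesis; see Definition \ref{d:densities}), every restriction of $\mu$ is again absolutely continuous with respect to $\cH^{n}$, so $\mu_{r} := \mu\restr E$ is countably $n$-rectifiable in the sense of Definition \ref{d:cr}; in general one instead appeals to the classical structure theorem \cite[3.3.13--3.3.15]{federer2014geometric} applied to the Borel carrier of $\mu$.

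To finish, suppose $\mu_{u} := \mu\restr(\R^{m}\setminus E)$ were not purely $n$-unrectifiable, so that $\mu_{u}(f(\R^{n})) = \mu\big((\R^{m}\setminus E)\cap f(\R^{n})\big) > 0$ for some Lipschitz $f:\R^{n}\to\R^{m}$. Then $E' := E\cup\big((\R^{m}\setminus E)\cap f(\R^{n})\big)$ is $\mu$-measurable, lies $\mu$-a.e. inside $\big(\bigcup_{i} f_{i}(\R^{n})\big)\cup f(\R^{n})$, hence $E'\in\mathcal{F}$, yet $\mu(E') = \mu(E) + \mu_{u}(f(\R^{n})) > s$, contradicting the choice of $s$. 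Therefore $\mu_{u}(f(\R^{n}))=0$ for every Lipschitz $f$, which is precisely pure unrectifiability, and $\mu = \mu_{r} + \mu_{u}$ is the asserted decomposition.

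I expect the only genuinely delicate point to be the absolute-continuity clause built into Definition \ref{d:cr}: the maximal rectifiable piece must satisfy $\mu_{r}\ll\cH^{n}$, and ensuring this is exactly where one uses $\Theta^{n,*}(\mu,\cdot)<\infty$ $\mu$-a.e. (which passes to restrictions), or, failing that, the classical structure theorem applied piece-by-piece on a Borel carrier of $\mu$ of $\sigma$-finite Hausdorff measure. The remaining ingredients — the reduction to finite total mass, closure of $\mathcal{F}$ under countable unions, attainment of the supremum, and the maximality contradiction — are entirely routine, which is why this lemma is quoted rather than proved in detail.
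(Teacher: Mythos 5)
Your exhaustion argument is correct and standard, and it supplies a self-contained proof where the paper simply defers to Federer \cite[3.3.12--3.3.15]{federer2014geometric}. The maximal-set construction, the closure of your family $\mathcal{F}$ under countable unions, the attainment of the supremum, and the maximality contradiction all go through; the reduction to finite total mass is legitimate because Radon measures are locally finite, and your recombination step (a countable union of carriers is a carrier; a countable sum of purely unrectifiable measures is purely unrectifiable) is sound.

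You are right to single out the absolute-continuity clause in Definition \ref{d:cr} as the delicate point, but it is worth stating more forcefully: under the hypothesis exactly as written --- $0 < \Theta^{n,*}(\mu,x)$ $\mu$-a.e., with no finiteness assumed --- the decomposition need not exist at all. A Dirac mass is the simplest obstruction: it is not absolutely continuous with respect to $\cH^{n}$, so it cannot form part of a $\mu_{r}$ satisfying Definition \ref{d:cr}, yet it charges a Lipschitz image of $\R^{n}$, so it cannot form part of a purely unrectifiable $\mu_{u}$. Your fallback of applying Federer to a $\sigma$-finite $\cH^{n}$-carrier likewise requires $\Theta^{n,*}(\mu,\cdot)<\infty$ $\mu$-a.e., since otherwise no such carrier need exist. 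Everywhere the lemma is actually invoked (notably Lemma \ref{l:54}) the finite-upper-density hypothesis is in force, so the stated lemma is best read as tacitly including $\Theta^{n,*}(\mu,x)<\infty$ $\mu$-a.e.; with that hypothesis restored your proof is complete.
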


Now, we are prepared to show 
\begin{lemma} \label{l:54}
Let $\mu$ be a Radon measure on $\R^{m}$ with $0 < \Theta^{n,*}(\mu,x) < \infty$ for $\mu$ almost every $x \in \R^{m}$. Let $\cK^{2}$ a $(\mu,2)$-proper integrand. After writing $\mu = \mu_{u} + \mu_{r}$ as in Lemma \ref{l:RandURparts}, if $\mu_{u}(\R^{m}) > 0$ then $\cM_{\cK^{2}}(\mu) = + \infty$.
\end{lemma}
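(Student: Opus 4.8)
The plan is to prove the contrapositive: if $\cM_{\cK^2}(\mu) < \infty$, then $\mu_u(\R^m) = 0$. So suppose instead that $\mu_u(\R^m) > 0$; the goal is to produce a Lipschitz $n$-graph carrying a definite proportion of the mass of a restriction of $\mu_u$, contradicting pure unrectifiability. First I would pass to a tractable purely unrectifiable measure. By Lemma \ref{l:RandURparts}, $\mu_u = \mu \restr F$ for a $\mu$-measurable set $F$; since $(\mu \restr F)^{n+2} = \mu^{n+2} \restr F^{n+2}$ this gives $\cM_{\cK^2}(\mu_u) \le \cM_{\cK^2}(\mu) < \infty$, while the Lebesgue--Besicovitch differentiation theorem gives $\mu_u(B(x,r))/\mu(B(x,r)) \to 1$ for $\mu_u$-a.e.\ $x$, whence $\Theta^{n,*}(\mu_u,x) = \Theta^{n,*}(\mu,x) \in (0,\infty)$ for $\mu_u$-a.e.\ $x$. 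Since $\mu_u(\R^m) > 0$ and $\mu_u$ is Radon I fix a ball $B_0$ with $0 < \mu_u(B_0) < \infty$ and replace $\mu_u$ by $\mu_u' := \mu_u \restr B_0$: this is finite, boundedly supported, purely unrectifiable, has $\cM_{\cK^2}(\mu_u') < \infty$, and still satisfies $0 < \Theta^{n,*}(\mu_u',x) < \infty$ for $\mu_u'$-a.e.\ $x$. Applying Lemma \ref{l:step0} to $\mu_u'$ yields a Borel set $E'$ such that $\nu := \mu_u' \restr E'$ has $\nu(\R^m) \ge \tfrac{1}{2}\mu_u'(\R^m) > 0$, has bounded support, satisfies $\cM_{\cK^2}(\nu) < \infty$ and the two-sided density bound $0 < c \le \Theta^{n,*}(\nu,x) \le C < \infty$ for $\nu$-a.e.\ $x$, and --- being a restriction of $\mu_u$ --- remains purely unrectifiable. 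This puts $\nu$ in the hypotheses of Lemma \ref{l:53}.

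Next I would apply Lemma \ref{l:53} to $\nu$ with a small parameter $\zeta_0 > 0$, to be pinned down below, getting a compact $E^* \subset \spt \nu$ with $\nu(E^*) > 0$, $\nu(E^*) \ge \tfrac{c}{2^{n+2}}(\diam E^*)^n$, $\nu(E^* \cap B(x,t)) \le 2C t^n$ for all $x \in E^*$ and $t > 0$, and $\cM_{\cK^2}(\nu \restr E^*) \le \zeta_0 (\diam E^*)^n$. Since $\nu$ is non-atomic (a consequence of the upper density bound) and $\nu(E^*) > 0$, the number $d := \diam E^*$ is strictly positive. Fixing $x_0 \in E^*$, I normalize and rescale: let $\sigma$ be the probability measure $\sigma(A) := \nu(E^*)^{-1}\,(\nu \restr E^*)(dA + x_0)$, i.e.\ the image of $\nu \restr E^*$ under the dilation $y \mapsto (y - x_0)/d$ divided by its total mass. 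Then $\sigma$ is supported in $B(0,2)$, so hypothesis (A) of Theorem \ref{t:maingraph} holds. For hypothesis (B): any ball meeting $E^*$ lies inside a ball of twice the radius centred at a point of $E^*$, so the local upper bound for $E^*$ together with $\nu(E^*)^{-1} \le 2^{n+2} c^{-1} d^{-n}$ give $\sigma(B) \le C_0 (\diam B)^n$ for every ball $B$, with $C_0 = C_0(n,c,C)$ \emph{independent of $E^*$}.

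The heart of the matter is hypothesis (C). By Proposition \ref{p:MCTM} --- equivalently, a direct computation using the translation invariance and homogeneity of $\cK$ --- the dilation scales the curvature so that $\cM_{\cK^2}(\sigma) = \nu(E^*)^{-(n+2)}\, d^{\,n(n+1)}\, \cM_{\cK^2}(\nu \restr E^*)$. Inserting $\nu(E^*) \ge \tfrac{c}{2^{n+2}} d^n$ and $\cM_{\cK^2}(\nu \restr E^*) \le \zeta_0 d^n$, every power of $d$ cancels ($-n(n+2) + n(n+1) + n = 0$) and one is left with $\cM_{\cK^2}(\sigma) \le 2^{(n+2)^2} c^{-(n+2)} \zeta_0$. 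Since $\zeta_0$ in Lemma \ref{l:53} was arbitrary and $C_0$ --- hence the threshold $\eta = \eta(\cK,n,m,C_0)$ of Theorem \ref{t:maingraph} --- depends only on $\cK, n, m, c, C$, I now fix $\zeta_0$ so small that $\cM_{\cK^2}(\sigma) \le \eta$. Then $\sigma$ satisfies (A), (B), and (C).

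Finally, Theorem \ref{t:maingraph} produces a set $\Gamma$, a rotation of the graph of a Lipschitz map $\R^n \to \R^{m-n}$ --- hence $\Gamma = h(\R^n)$ for some Lipschitz $h : \R^n \to \R^m$ --- with $\sigma(\R^m \setminus \Gamma) < \tfrac{1}{100}\sigma(\R^m)$, so $\sigma(\Gamma) \ge \tfrac{99}{100} > 0$. But $\sigma$ is an affine image of the purely unrectifiable measure $\nu \restr E^*$ and is therefore itself purely unrectifiable, so $\sigma(\Gamma) = 0$: a contradiction. Hence $\mu_u(\R^m) = 0$ whenever $\cM_{\cK^2}(\mu) < \infty$, which is the stated claim. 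I expect the blow-up step to be the delicate one: the fact that hypothesis (C) can be arranged at all, and in particular the exact cancellation of the powers of $\diam E^*$ --- which is precisely the compatibility of the $(\diam E^*)^n$ normalization built into Lemma \ref{l:53}, the homogeneity of $\cK$, and the dilation --- is what allows everything to be absorbed into an arbitrarily small $\zeta_0$. The density transfers under restriction, the verifications of (A) and (B), and the concluding unrectifiability contradiction are by comparison routine.
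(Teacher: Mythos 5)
Your argument is correct and mirrors the paper's proof almost exactly: pass to a finite, boundedly supported restriction of $\mu_u$ with two-sided upper density bounds via Lemma \ref{l:step0}, extract a good compact $E^*$ via Lemma \ref{l:53}, rescale so that Theorem \ref{t:maingraph} applies (the exact cancellation of powers of $\diam E^*$ you highlight is the same delicate point the paper exploits via Proposition \ref{p:MCTM}), and derive the contradiction with pure unrectifiability. The only differences are cosmetic --- you restrict to a ball before invoking Lemma \ref{l:step0} whereas the paper does so afterward, and you normalize $\sigma$ to be a probability measure rather than using the explicit factor $(\diam E^*)^{-n}2^{n+2}/c$ --- and incidentally your exponent $d^{n(n+1)}$ is the correct one (consistent with \eqref{e:s2}), so you have implicitly also fixed a sign typo in the stated form of \eqref{e:s1}.
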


Notably, Lemma \ref{l:54} is the contrapositive of Theorem \ref{t:mainrect}.

\begin{proof}
Let $\cK^{2}$ and $\mu, \mu_{u}, \mu_{r}$ be as in the statement of Lemma \ref{l:54}. Without loss of generality, suppose $0 < \mu(\R^{m}) < \infty$. It follows $0 < \Theta^{n,*}(\mu_{u},  x) < \infty$ for $\mu_{u}$ almost every $x \in \R^{m}$, since the Lebesgue-Besicovitch differentiation theorem guarantees
$$
\Theta^{n,*}(\mu_{u},x) = \limsup_{r \to 0} \frac{ \mu_{u}(B(x,r))}{r^{n}} = \limsup_{r \to 0} \frac{ \mu_{u}(B(x,r))}{\mu(B(x,r))} \frac{ \mu(B(x,r))}{r^{n}} = \Theta^{n,*}(\mu,x)
$$
for $\mu_{u}$ a.e. $x \in \R^{m}$.

Moreover, by assumption $\mu_{u}(\R^{m}) > 0$. By Lemma \ref{l:step0} it follows that there exists $\nu_{0}$ a restriction of $\mu_{u}$ and some $c, C$ such that $0 < c \le \Theta^{n,*}(\nu_{0}, x) \le C < \infty$ for $\nu_{0}$ almost every $x \in \R^{m}$. Lemma \ref{l:step0} also guarantees that, $0 < \frac{1}{2} \mu_{u}(\R^{m}) <  \nu_{0}(\R^{m}) \le \mu(\R^{m}) < \infty$. Since $\nu_{0}$ is a restriction of $\mu_{u}$ to some Borel set, it follows that $\nu_{0}$ is a Radon measure satisfying $\cM_{\cK^{2}}(\nu_{0}) \le \cM_{\cK^{2}}(\mu_{u}) \le \cM_{\cK^{2}}(\mu)$. In the spirit of contradiction, suppose $\cM_{\cK^{2}}(\mu) < \infty$.

Since $\nu_{0}(\R^{m}) < \infty$, without loss of generality, suppose $\spt(\nu_{0})$ is bounded. Then, $\nu_{0}$ satisfies the hypothesis of Lemma \ref{l:53}. In particular, for 
\begin{equation}\label{e:etazeta}
\zeta_{0} < \eta \cdot \left( \frac{ 2^{n+2}}{c} \right)^{-(n+2)}  \text{ where }\eta = \eta \left( \cK, n, m, 2^{n+3}   C c^{-1}  \right) \text{ is from Theorem \ref{t:maingraph}},
\end{equation}
there exists some compact $E^{*} \subset \spt(\nu_{0})$ such that

\begin{enumerate}
\item[(i)] 
\begin{equation} \label{e:s14}
\nu_{0}(E^{*}) \ge \frac{c}{2^{n+2}} \left( \diam E^{*} \right)^{n}
\end{equation}
\item[(ii)] For all $x \in E^{*}$ and all $t > 0$, 
\begin{equation} \label{e:s17}
\nu_{0} \left( E^{*} \cap B(x,t) \right) \le 2 C t^{n}.
\end{equation}
\item[(iii)] 
\begin{equation} \label{e:s19}
\cM_{\cK^{p}}(\nu_{0} \restr E^{*}) \le \zeta_{0} \left( \diam E^{*} \right)^{n}
\end{equation}
\end{enumerate}

Our next goal is to scale and translate $\nu_{0}$ to find a measure $\nu_{1}$ which satisfies the hypothesis of Theorem \ref{t:maingraph}. To this end, choose $x_{0} \in E^{*}$. Then,
\begin{equation} \label{e:s311}
E^{*} \subset B(x_{0}, \diam(E^{*})).
\end{equation}
Let $f(y) = \frac{y - x_{0}}{\diam (E^{*})}$, so that 
\begin{equation} \label{e:s312} 
f^{-1}(B(x_{0}, \diam(E^{*}))) = B(0,1).
\end{equation} 

Define 

\begin{equation} \label{e:nu1}
\nu_{1} = (\diam E^{*})^{-n} \left( \frac{2^{n+2}}{c} \right) f_{\sharp} (\nu_{0} \restr E^{*}).
\end{equation} 

It follows by computations similar to those at the beginning of the proof of Corollary \ref{c:s1} that $\nu_{1}$ satisfies the hypotheses of Theorem \ref{t:maingraph}.

Consequently, Theorem \ref{t:maingraph} ensures there exists a Lipschitz graph $\Gamma$ such that
$$
\frac{\nu_{1}(\R^{m} \setminus \Gamma)}{\nu_{1}(\R^{m})} < \frac{1}{100}.
$$
But, recalling \eqref{e:nu1}, it is clear this implies
$$
\frac{ \left( \nu_{0} \restr E^{*} \right) \left( \R^{m} \setminus f(\Gamma) \right)}{\nu_{0}(\R^{m})} < \frac{1}{100}.
$$
Since $f$ is a translation and scaling, $f(\Gamma)$ is still a Lipschitz graph, contradicting the fact that $\nu_{0}$ is the restriction of an $n$-purely unrectifiable measure.
\end{proof}

\subsection{Pointwise Menger curvature and $\beta$-numbers.} 

The first goal of this section is to prove Theorem \ref{t:maincharacterization}, included below for convenience.
\begin{theorem} \label{t:mainCharacterization}
Let $\mu$ be a Radon measure on $\R^{m}$ with $0 < \Theta^{n}_{*}(\mu,x) \le \Theta^{n,*}(\mu,x) < \infty$ for $\mu$ almost every $x \in \R^{m}$. Then the following are equivalent:
\begin{enumerate}
\item $\mu$ is countably $n$-rectifiable.
\item For $\mu$ almost every $x \in \R^{m}$, $\curv^{n}_{\cK_{2};\mu}(x,1) < + \infty$.
\item For $\mu$ almost every $x \in \R^{m}$, $\curv^{n}_{\cK_{1};\mu}(x,1) < + \infty$.
\item $\mu$ has $\sigma$-finite integral Menger curvature in the sense that $\mu$ can be written as $\mu = \sum_{j=1}^{\infty} \mu_{j}$ where each $\mu_{j}$ satisfies
$\displaystyle
\cM_{\cK_{1}^{2}}(\mu_{j}) < \infty.
$
\item $\mu$ has $\sigma$-finite integral Menger curvature in the sense that $\mu$ can be written as $\mu = \sum_{j=1}^{\infty} \mu_{j}$ where each $\mu_{j}$ satisfies
$\displaystyle
\cM_{\cK_{2}^{2}}(\mu_{j}) < \infty.
$

\end{enumerate}
\end{theorem}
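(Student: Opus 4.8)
The plan is to establish the implications $(1)\Rightarrow(2)\Rightarrow(3)\Rightarrow(4)\Rightarrow(1)$ together with $(2)\Rightarrow(5)\Rightarrow(1)$, which closes all five equivalences. The implication $(1)\Rightarrow(2)$ is exactly Theorem~\ref{t:rect2finitecurv}, and $(2)\Rightarrow(3)$ is immediate from the pointwise inequality $\cK_{1}^{2}\le\cK_{2}^{2}$, which forces $\curv^{n}_{\cK_{1}^{2};\mu}(x,1)\le\curv^{n}_{\cK_{2}^{2};\mu}(x,1)$ for every $x$. For $(4)\Rightarrow(1)$ and $(5)\Rightarrow(1)$ I would invoke Theorem~\ref{t:radonRect} (i.e.\ Theorem~\ref{t:mainrect}) on each summand: if $\mu=\sum_{j}\mu_{j}$ with $\cM_{\cK_{i}^{2}}(\mu_{j})<\infty$, then $\mu_{j}\le\mu$, so $\mu_{j}\ll\mu$ with density $g_{j}=\dif\mu_{j}/\dif\mu\in L^{1}_{\loc}(\mu)$ (hence $\mu_{j}$ is Radon), and $0<g_{j}<\infty$ holds $\mu_{j}$-a.e. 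By the Lebesgue--Besicovitch differentiation theorem $\mu_{j}(B(x,r))/\mu(B(x,r))\to g_{j}(x)$ for $\mu$-a.e.\ (hence $\mu_{j}$-a.e.) $x$, so $\Theta^{n,*}(\mu_{j},x)=g_{j}(x)\,\Theta^{n,*}(\mu,x)\in(0,\infty)$ for $\mu_{j}$-a.e.\ $x$; thus Theorem~\ref{t:radonRect} applies and each $\mu_{j}$ is countably $n$-rectifiable. A countable sum of countably $n$-rectifiable measures is again countably $n$-rectifiable (the union of the associated Lipschitz images carries $\mu$, and $\mu\ll\cH^{n}$ since each $\mu_{j}\ll\cH^{n}$), which gives $(1)$.

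The heart of the argument is $(3)\Rightarrow(4)$; the proof of $(2)\Rightarrow(5)$ is identical with $\cK_{2}$ in place of $\cK_{1}$. The issue is that $\curv^{n}_{\cK_{1}^{2};\mu}(x,1)$ only controls the part of $\cK_{1}^{2}$ where the last $n+1$ arguments lie in $B(x,1)$, so $\mu$ must be chopped both spatially and by size. Fix a countable Borel partition $\{Q_{k}\}$ of $\R^{m}$ into sets of diameter $<1$ (dyadic cubes of a fixed sufficiently fine generation will do); since $\mu$ is Radon each $Q_{k}$ has finite $\mu$-measure. Put $F_{j}=\{x:\curv^{n}_{\cK_{1}^{2};\mu}(x,1)\le j\}$, which is $\mu$-measurable by Tonelli, set $G_{j}=F_{j}\setminus F_{j-1}$, and note that $(3)$ makes $\bigcup_{j}G_{j}$ of full $\mu$-measure. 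Enumerate the countable family $\{Q_{k}\cap G_{j}\}_{k,j}$ as $\{A_{i}\}$, say $A_{i}=Q_{k(i)}\cap G_{j(i)}$, and let $\mu_{i}=\mu\restr A_{i}$, so $\mu=\sum_{i}\mu_{i}$. For $x\in A_{i}$ every point of $A_{i}\subset Q_{k(i)}$ lies in $B(x,1)$, so using $\mu_{i}\le\mu$,
\[
\cM_{\cK_{1}^{2}}(\mu_{i})=\int_{A_{i}}\int_{A_{i}^{n+1}}\cK_{1}^{2}(x,x_{1},\dots,x_{n+1})\,\dif\mu^{n+1}\,\dif\mu(x)\le\int_{A_{i}}\curv^{n}_{\cK_{1}^{2};\mu}(x,1)\,\dif\mu(x)\le j(i)\,\mu(Q_{k(i)})<\infty,
\]
since $x\in A_{i}\subset F_{j(i)}$ gives $\curv^{n}_{\cK_{1}^{2};\mu}(x,1)\le j(i)$. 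This is precisely $(4)$, and running the same construction from $(2)$ with $\cK_{2}$ yields $(5)$.

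I expect the main obstacle to be this $(3)\Rightarrow(4)$ step, specifically the realization that neither a purely spatial truncation nor a purely level-set truncation suffices: the superlevel sets $F_{j}$ of $x\mapsto\curv^{n}_{\cK_{1}^{2};\mu}(x,1)$ need not have finite $\mu$-measure, and the regions where the pointwise curvature is small are not spatially localized, so one is forced to use the two-parameter refinement $Q_{k}\cap G_{j}$. Once that refinement is in place the estimate is a one-line Tonelli computation together with the containment $A_{i}\subset B(x,1)$ for $x\in A_{i}$. A secondary, routine but essential point is the density-transfer observation in $(4)\Rightarrow(1)$ and $(5)\Rightarrow(1)$, namely that $0<\Theta^{n,*}(\mu_{j},x)<\infty$ for $\mu_{j}$-a.e.\ $x$, without which Theorem~\ref{t:radonRect} could not be applied to the pieces.
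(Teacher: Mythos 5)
Your proof follows the paper's chain of implications exactly: the cycle $(1)\Rightarrow(2)\Rightarrow(3)\Rightarrow(4)\Rightarrow(1)$ together with the $\cK_{2}$-branch $(2)\Rightarrow(5)\Rightarrow(1)$, with $(1)\Rightarrow(2)$ from Theorem~\ref{t:rect2finitecurv}, $(2)\Rightarrow(3)$ from $\cK_{1}\le\cK_{2}$, and $(4),(5)\Rightarrow(1)$ from Theorem~\ref{t:radonRect}. However, your implementation of $(3)\Rightarrow(4)$ is genuinely more careful than the paper's. The paper decomposes $\mu$ only by the level sets $E_{j}=\{x:\curv^{n}_{\cK_{1};\mu}(x,1)\in[j,j+1)\}$ (after WLOG $\mu(\R^{m})<\infty$), then asserts $\curv^{n}_{\cK_{1};\mu_{j}}(x,\infty)\le j+1$ for $x\in E_{j}$. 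As stated this is a gap: $\curv^{n}_{\cK_{1};\mu_j}(x,\infty)$ integrates over all of $(\R^{m})^{n+1}$, whereas $x\in E_{j}$ only controls the integral over $B(x,1)^{n+1}$. The gap is closable --- when some $x_{i}\notin B(x,1)$ the diameter is $\ge 1$, and homogeneity of $\cK_{1}$ gives $\cK_{1}^{2}\le\diam^{-n(n+1)}\le 1$, so the tail contributes at most $\mu(\R^{m})^{n+1}<\infty$ and $\cM_{\cK_{1}^{2}}(\mu_{j})<\infty$ survives --- but the paper does not supply this. Your doubly-indexed refinement $A_{i}=Q_{k(i)}\cap G_{j(i)}$ with $\diam Q_{k}<1$ guarantees $A_{i}\subset B(x,1)$ for every $x\in A_{i}$, so only the scale-$1$ curvature ever enters and no tail estimate is needed: cleaner, at the price of a second index. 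Your density-transfer observation in $(4)\Rightarrow(1)$, that $\mu_{j}\le\mu$ forces $0<\Theta^{n,*}(\mu_{j},x)<\infty$ $\mu_{j}$-a.e., is also correct and genuinely required: the paper verifies the density hypothesis only for the specific restriction-type pieces built in its $(3)\Rightarrow(4)$, not for arbitrary summands $\mu_{j}$ as $(4)$ allows.
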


\begin{proof}
The fact that (1) $\implies$ (2) is the content of \cite[Lemma 1.1]{kolasinski2016estimating} combined with the characterization by Azzam and Tolsa in Theorem \ref{t:azzchar}. Since $\cK_{1} \le \cK_{2}$ pointwise, it also follows that (2) $\implies$ (3). So, it suffices to show $(3) \implies (4) \implies (1)$ and $(2) \implies (5) \implies (1)$.

To this end, fix $\mu$ as in the statement of the theorem. Moreover, without loss of generality suppose that 
\begin{equation} \label{e:331}
\mu(\R^{m}) < \infty
\end{equation} 

Then, for $j \in \N_{0}$ define
$$
E_{j} = \{ x \in \R^{m} : \curv^{n}_{\cK_{1}; \mu}(x,1) \in [j, j+1) \} \quad \text{and} \quad \mu_{j} = \mu \restr E_{j}.
$$
Fubini's theorem \cite[Theorem 1.4.1]{gariepy1992measure} ensures that the map 
$$
x \mapsto \int_{(\R^{m})^{n+1}} \cK^{2}(x,x_{1}, \dots, x_{n+1}) \dif \mu^{n+1}(x_{1}, \dots, x_{n+1}) = \curv^{n}_{\cK_{1};\mu}(x,\infty)
$$ 
is measurable. In particular the sets $E_{j}$ are measurable, as each $E_{j}$ is the preimage of a Borel set by a measurable function. For each $j  \in \N_{0}$ it follows from \eqref{e:331} that $\mu_{j} = \mu \restr E_{j}$ is a finite, Borel measure. The Lebesgue-Besicovitch differentiation theorem ensures that $0 < \Theta^{n,*}(\mu_{j},x) < \infty$ for $\mu_{j}$ a.e. $x \in \R^{m}$ since $\displaystyle \lim_{r \to 0} \frac{ \mu_{j}(B(x,r))}{\mu(B(x,r))} = 1$ for $\mu_{j}$ a.e. $x \in \R^{m}$. Since $\cK^{2}$ is a non-negative function, and $\mu_{j}$ satisfies $\mu_{j}(E) \le \mu(E)$ for all $\mu$-measurable sets $E$ it follows
\begin{align*}
\curv^{n}_{\cK_{1};\mu_{j}}(x, \infty) &= \int_{(\R^{m})^{n+1}} \cK^{2}(x,x_{1}, \dots, x_{n+1}) \dif \mu_{j}^{n+1}(x_{1}, \dots, x_{n+1}) \\
& \le \int_{(R^{m})^{n+1}}  \cK^{2}(x, x_{1},\dots, x_{n+1}) \dif \mu^{n+1}(x_{1}, \dots, x_{n+1}) \\
& \le (j+1)  \qquad \forall x \in E_{j}.
\end{align*}
Combining the above computation with \eqref{e:331} yields,
$$
\cM_{\cK_{1}^{2}}(\mu_{j}) = \int_{\R^{m}} \curv^{n}_{\cK_{1};\mu_{j}}(x,\infty) \dif \mu_{j}(x) < (j+1) \mu_{j}(\R^{m}) < \infty.
$$
Therefore, $(3) \implies (4)$. To see $(4) \implies (1)$ , note that Theorem \ref{t:mainrect} ensures each $\mu_{j}$ is $n$-countably rectifiable. Since $\mu = \sum_{j} \mu_{j}$ satisfies $\mu \ll \cH^{n}$ has been decomposed into countably many $n$-countably rectifiable pieces, it follows $\mu$ is countably $n$-rectifiable.

The fact that $(2) \implies (5) \implies (1)$ follows identically with $\cK_{2}$ in place of $\cK_{1}$.
\end{proof}

Given the history of the subject, this method of proof is not very satisfying, namely due to the fact that Lerman and Whitehouse's characterization of uniform rectifiability in terms of integral Menger curvature demonstrated an equivalence of a Carleson-type condition for integral Menger curvature and the Carleson condition for the $\beta$-numbers. Moreover, the first direction in this characterization follows from a direct comparison with $\beta$-numbers due to Kolasi{\'n}ski.

Our next goal is to show Theorem \ref{t:mainbeta}, which is equivalent to Theorem \ref{t:mainBeta}. We simply use Lemma \ref{l:generalityofmain} to restate the theorem for the sake of illuminating the dependencies on $x$.

\begin{theorem} \label{t:mainBeta}
If $\mu$ is an $n$-Ahlfors upper-regular Radon measure on $\R^{m}$ with upper-regularity constant $C_{0}$, and there exists $\lambda$ such that $\mu(B(x,r)) \ge \lambda r^{n}$ for all $0 < r \le R$, then
\begin{equation} \label{e:58}
\int_{0}^{R} \obeta^{n}_{\mu;2}(x,r)^{2} \frac{ \dif r}{r} \le C_{3} \curv^{n}_{\mu;\cK^{2}}(x,R),
\end{equation}
where $C_{3} = C_{3}(\lambda,m,n, C_{0}, \cK)$, and $\cK$ is any $(\mu,2)$-proper integrand.

In fact, if $\mu, \lambda, x,r$, and $R$ are as above, then for $\cK \in \{\cK_{1}, \cK_{2}\}$ 
\begin{align} \label{e:59}
\int_{0}^{R} \obeta_{\mu;2}^{n}(x,r)^{2} \frac{ \dif r}{r} &\le C \curv^{n}_{\mu;\cK}(x,R) \le  C \cdot \Gamma \int_{0}^{2R} \Theta^{n}(\mu,x,r) \obeta_{\mu;2}^{n}(x,r)^{2} \frac{ \dif r}{r} \\
\tag*{} &\le \tilde{C} \int_{0}^{2R} \obeta_{\mu;2}^{n}(x,r)^{2} \frac{ \dif r}{r}
\end{align}
with constants $C, \Gamma, \tilde{C}$ depending on $m,n, \lambda$ the upper-regularity constant of $\mu$, and $\cK$.
\end{theorem}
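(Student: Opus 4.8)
My plan is to establish \eqref{e:58} --- a converse to Lemma~\ref{l:kol16} --- in three steps, and then get \eqref{e:59} by chaining it with known estimates. The mechanism is that when $B(x,r)$ carries enough mass, a definite $\mu^{n}$-fraction of tuples $(x_{1},\dots,x_{n})\in B(x,r)^{n}$ spans, together with $x$, a non-degenerate simplex, and property~(2) of a $(\mu,2)$-proper integrand converts a pointwise distance bound for such simplices into a bound on $\cK^{2}$. \textbf{Step 1 (many good simplices).} Fix $0<r\le R$, so $\lambda r^{n}\le\mu(B(x,r))$ and $\mu(B)\le C_{0}(\diam B)^{n}$. I would build good tuples greedily: pick $x_{1}\in B(x,r)$ outside $B(x,\kappa r)$, then $x_{2}\in B(x,r)$ outside the $\kappa r$-neighborhood of $\aff\{x,x_{1}\}$, and in general $x_{k}\in B(x,r)$ outside the $\kappa r$-neighborhood of $\aff\{x,x_{1},\dots,x_{k-1}\}$. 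Since $\aff\{x,x_{1},\dots,x_{k-1}\}$ has dimension at most $n-1$, its $\kappa r$-neighborhood meets $B(x,r)$ in a set coverable by $\lesssim_{n}\kappa^{-(n-1)}$ balls of radius comparable to $\kappa r$, so upper regularity bounds its $\mu$-mass by $\lesssim_{n}C_{0}\kappa r^{n}$; choosing $\kappa=\kappa(\lambda,C_{0},n)$ so small that this is at most $\frac{\lambda}{2n}r^{n}$, each of the $n$ choices retains mass at least $(1-\frac1{2n})\mu(B(x,r))$. By iterated Tonelli the resulting region $\widetilde G(x,r)\subset B(x,r)^{n}$ then satisfies $\mu^{n}(\widetilde G(x,r))\ge(1-\frac1{2n})^{n}\mu(B(x,r))^{n}\ge\frac12\lambda^{n}r^{n^{2}}$. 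For $(x_{1},\dots,x_{n})\in\widetilde G(x,r)$ one has $\cH^{n}(\Delta(x,x_{1},\dots,x_{n}))=\frac1{n!}\,|x-x_{1}|\prod_{k=2}^{n}\dist(x_{k},\aff\{x,x_{1},\dots,x_{k-1}\})\ge\frac{(\kappa r)^{n}}{n!}$; comparing this with the $(n-1)$-volumes $\lesssim_{n}r^{n-1}$ of its faces shows $\{x,x_{1},\dots,x_{n}\}$ is an $(n,\kappa'r)$-simplex with $\kappa'=\kappa'(\lambda,C_{0},n):=\kappa^{n}2^{1-n}$.

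\textbf{Step 2 (from simplices to $\obeta$).} For $(x_{1},\dots,x_{n})\in\widetilde G(x,r)$ set $L=\aff\{x,x_{1},\dots,x_{n}\}$, an $n$-plane through the center $x$. Applying property~(2) from Definition~\ref{d:propint} to $\cK$ with $t=r$ and $C=1/\kappa'$ (so $t/C=\kappa'r$ and $\Delta(x,x_{1},\dots,x_{n})\subset B(x,r)\subset B(x,Ct)$) yields, for every $w\in B(x,r)$,
\begin{equation*}
\left(\frac{\dist(w,L)}{r}\right)^{2}\le c\,(\kappa')^{-\ell}\,r^{n(n+1)}\,\cK^{2}(x,x_{1},\dots,x_{n},w),
\end{equation*}
with $c=c(n,\cK)$, $\ell=\ell(n,\cK)$. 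Integrating in $w$ over $B(x,r)$, dividing by $r^{n}$, using $\obeta^{n}_{\mu;2}(x,r)^{2}\le r^{-n}\int_{B(x,r)}(\dist(w,L)/r)^{2}\,\dif\mu(w)$ (valid since $L\ni x$), then averaging over $(x_{1},\dots,x_{n})\in\widetilde G(x,r)$ and inserting $\mu^{n}(\widetilde G(x,r))\ge\frac12\lambda^{n}r^{n^{2}}$, the powers of $r$ cancel ($r^{n(n+1)}/(r^{n}\cdot r^{n^{2}})=1$) and I obtain
\begin{equation*}
\obeta^{n}_{\mu;2}(x,r)^{2}\ \lesssim_{\lambda,C_{0},n,\cK}\ \int_{\widetilde G(x,r)}\int_{B(x,r)}\cK^{2}(x,x_{1},\dots,x_{n},w)\,\dif\mu(w)\,\dif\mu^{n}(x_{1},\dots,x_{n}).
\end{equation*}

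\textbf{Step 3 (integration in $r$).} Integrate the last display against $\frac{\dif r}{r}$ over $(0,R]$ and apply Tonelli. On $\widetilde G(x,r)$ all pairwise distances among $x,x_{1},\dots,x_{n}$ lie in $[\kappa'r,2r]$; in particular $|x-x_{1}|\in[\kappa'r,r]$, so for fixed $(x_{1},\dots,x_{n})$ the admissible $r$ form a logarithmic interval of length $\le\log(1/\kappa')$, and where the integrand is nonzero one has $(x_{1},\dots,x_{n},w)\in B(x,R)^{n+1}$. Therefore
\begin{equation*}
\int_{0}^{R}\obeta^{n}_{\mu;2}(x,r)^{2}\,\frac{\dif r}{r}\ \le\ C_{3}\int_{B(x,R)^{n+1}}\cK^{2}(x,x_{1},\dots,x_{n+1})\,\dif\mu^{n+1}=C_{3}\,\curv^{n}_{\cK^{2};\mu}(x,R),
\end{equation*}
with $C_{3}=C_{3}(\lambda,m,n,C_{0},\cK)$, which is \eqref{e:58}. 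Then \eqref{e:59} follows by chaining: the first inequality is \eqref{e:58} for $\cK\in\{\cK_{1},\cK_{2}\}$; the middle one is Lemma~\ref{l:kol16}, which covers $\cK_{1}$ too since $\cK_{1}\le\cK_{2}$ forces $\curv^{n}_{\cK_{1}^{2};\mu}(x,R)\le\curv^{n}_{\cK_{2}^{2};\mu}(x,R)$; and the last uses $\Theta^{n}(\mu,x,r)\le C_{0}$ from upper regularity.

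\textbf{Main obstacle.} The delicate point is Step~1: producing a \emph{fixed} $\mu^{n}$-fraction of well-scaled simplices with non-degeneracy constant $\kappa'$ depending only on $\lambda,C_{0},m,n$, and in particular forcing $x$ itself to be a thick vertex --- which the greedy construction (fixing $x$ first and adjoining $x_{1},\dots,x_{n}$ transverse to the running affine hull) achieves modulo careful bookkeeping of the covering estimates and the volume-versus-height comparison. Steps~2 and~3 --- the use of property~(2), and the Tonelli rearrangement exploiting that well-scaled simplices at different scales occupy essentially disjoint subsets of $B(x,R)^{n+1}$ --- are then routine.
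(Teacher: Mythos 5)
Your proposal is correct and follows the same basic blueprint as the paper: build, at each scale $r$, a positive-$\mu^n$-measure collection of $n$-tuples that together with $x$ form uniformly non-degenerate simplices; apply property~(2) of Definition~\ref{d:propint} with $L=\aff\{x,x_{1},\dots,x_{n}\}$ (which passes through $x$, hence dominates $\obeta$); and then handle the $r$-integration by exploiting that a given tuple is a ``good'' tuple only over a bounded logarithmic range of scales. This is exactly the mechanism of Lemma~\ref{t:likeNV} and its use in the proof of Theorem~\ref{t:mainBeta}.

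Your implementation of Steps~1 and 3 differs in two ways worth noting. For Step~1, the paper's Lemma~\ref{t:likeNV} produces, via Vitali coverings both along the running affine hull $V_{k}$ and in the ambient space, \emph{specific} heavy balls $B_{i,r}$ so that the product $\mathbb{B}_{r}$ has definite mass; you instead characterize the good region $\widetilde G(x,r)$ directly by the greedy condition and bound its mass by iterated Tonelli, a cleaner bookkeeping that avoids the second (ambient-space) Vitali cover and thereby removes the $m$-dependence from the resulting constant (the paper's $C_{2}$ carries an $\eta^{m}$ factor). For Step~3, the paper discretizes to scales $r_{j}=(\delta/3)^{j}R$ and uses the geometric disjointness \eqref{e:disjoint} of the regions $\mathbb{B}_{r_{j}}$; you instead integrate continuously and observe that $\kappa r\le|x-x_{1}|\le r$ confines the admissible $r$ to an interval of logarithmic length $\lesssim\log(1/\kappa)$, so Tonelli yields the bound directly. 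These are equivalent ways of expressing the ``essential disjointness across scales'' and both are sound. Two small remarks: in \eqref{e:59} the middle quantity from Lemma~\ref{l:kol16} actually carries $\Theta^{n}(\mu,x,r)^{n}$ rather than $\Theta^{n}(\mu,x,r)$, but since upper regularity gives $\Theta^{n}(\mu,x,r)\le C_{0}$ the final inequality still holds; and your $\kappa'=\kappa^{n}2^{1-n}$ should also absorb the factorial/volume constants from the face-area estimate, but this only changes the dependence on $n$, not the structure of the argument.
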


This theorem demonstrates a more direct converse to Kolasi{\'n}ski's bound on $\beta$-numbers. Alas, notice that in its present form, Theorem \ref{t:mainBeta} requires much stronger density conditions than Theorem \ref{t:mainCharacterization}. It would be interesting to try to weaken the density conditions at least as far as they are in Theorem \ref{t:mainCharacterization}.

The following technical lemma plays a central role in the proof of Theorem \ref{t:mainBeta}. For a review of the notation used in the proof, see Section \ref{s:notation}.

\begin{lemma} \label{t:likeNV}
Let $\mu$ be an $n$-Ahlfors upper-regular Radon measure on $\R^{m}$ with upper-regularity constant $C_{0}$. Suppose $x \in \R^{m}$ and $\lambda, R > 0$ such that 
\begin{equation} \label{E:1}
\mu(B(x,r)) \ge \lambda r^{n}
\end{equation} 
holds for all $0 < r \le R$.

Then, for
\begin{equation} \label{E:deltadef}
\delta = \delta(n, \lambda, C_{0}) = \frac{\lambda}{2^{k+2} 5^{n-1} C_{0}} 
\end{equation}
and 
\begin{equation} \label{E:etadef}
\eta = \eta(n, \lambda, C_{0}) = \frac{\delta}{10n} = \frac{\lambda}{2^{k+3} 5^{n} n C_{0}}
\end{equation}
and all $0 < r \le R$ there exist points $\{x_{i,r}\}_{i=1}^{n}  \subset B(x,r)$ such that
\begin{equation} \label{E:bigh}
h_{\min}(x, x_{1,r}, \dots, x_{n,r}) \ge \delta r
\end{equation}
and
\begin{equation} \label{E:0}
(\mu \restr B(x,r)) (B(x_{i,r}, 5 \eta r)) \ge  \left( \frac{\lambda \eta^{m}}{2^{m+1}} \right) r^{n} = C_{2}(m,n,\lambda, C_{0}) r^{n}.
\end{equation}
In particular, if for each $i \in \{1, \dots, n\}$, $B_{i,r} \defeq B(x_{i,r}, 5 \eta r)$ for any choices of $y_{i} \in B_{i,r}$ it follows that
\begin{equation}\label{E:fczfat}
h_{\min}(x,y_{1}, \dots, y_{n}) \ge \delta r - 5 n \eta r = \frac{\delta r}{2}.
\end{equation} 

Finally, if $\mathbb{B}_{r} \defeq B_{1,r} \times \dots \times B_{n,r}$ then 
\begin{equation} 
\label{e:disjoint} \mathbb{B}_{\delta r/3} \cap \mathbb{B}_{r} = \emptyset.
\end{equation}
\end{lemma}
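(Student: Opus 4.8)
The plan is to construct $x_{1,r},\dots,x_{n,r}$ by a \emph{maximal} greedy selection inside the set of ``mass-dense'' points of $B(x,r)$, to bound from below the $n$-volume of the simplex $\Delta(x,x_{1,r},\dots,x_{n,r})$ using upper-regularity, and then to deduce \eqref{E:bigh} from the elementary relation between $h_{\min}$, the $n$-volume, and the $(n-1)$-volumes of the faces; the claims \eqref{E:fczfat} and \eqref{e:disjoint} are then purely geometric. \textbf{Mass-dense points.} For $0<r\le R$ and a parameter $\tau>0$ put $G=\{z\in B(x,r):(\mu\restr B(x,r))(B(z,\eta r))\ge\tau r^{n}\}$. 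Covering $B(x,r)\setminus G$ by a bounded-overlap family of balls of radius $\eta r$ centered in it (overlap depending only on $m$) and using that each carries less than $\tau r^{n}$ mass gives $\mu(B(x,r)\setminus G)\lesssim_{m}\eta^{-m}\tau r^{n}$; taking $\tau$ comparable to $\lambda\eta^{m}$ — precisely $\tau=\lambda\eta^{m}/2^{m+1}$, the constant $C_{2}$ of \eqref{E:0} — makes $\mu(B(x,r)\setminus G)<\tfrac12\lambda r^{n}$, so \eqref{E:1} forces $\mu(G)\ge\tfrac12\lambda r^{n}$. A limiting argument shows every $z\in\overline G$ has $(\mu\restr B(x,r))(B(z,2\eta r))\ge\tau r^{n}$, so any point chosen from $\overline G$ satisfies \eqref{E:0}.

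\textbf{The construction and the volume bound.} Inductively, having chosen $x_{1,r},\dots,x_{j-1,r}\in\overline G$ with $\{x,x_{1,r},\dots,x_{j-1,r}\}$ affinely independent, set $V_{j-1}=\aff\{x,x_{1,r},\dots,x_{j-1,r}\}$ (dimension $j-1\le n-1$) and let $x_{j,r}\in\overline G$ \emph{maximize} $h_{j}\defeq\dist(x_{j,r},V_{j-1})$; the maximum is attained ($\overline G$ compact) and positive (else $\overline G\subset V_{j-1}$ would give $\mu(G)=0$, since $\mu$ assigns no mass to $(j-1)$-planes by upper-regularity), so affine independence persists. All $x_{i,r}\in\overline G\subset B(x,r)$, giving \eqref{E:0}. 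By the maximality, $\overline G\subseteq\{z:\dist(z,V_{j-1})\le h_{j}\}$ for every $j$, and since $V_{0}\subset V_{1}\subset\cdots\subset V_{n-1}$ is a flag through $x$ one has $h_{1}\ge h_{2}\ge\cdots\ge h_{n}>0$; writing coordinates adapted to this flag, $\overline G$ lies in a box of side-lengths $\lesssim h_{1},\dots,h_{n-1}$ along $n-1$ directions and $\lesssim h_{n}$ along the remaining $m-n+1$ directions, which is covered by $\lesssim_{m,n}\prod_{j=1}^{n}h_{j}/h_{n}^{\,n}$ balls of radius $\lesssim_{m}h_{n}$, each of $\mu$-measure $\lesssim_{m,n}C_{0}h_{n}^{\,n}$. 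Therefore
$$
\tfrac12\lambda r^{n}\le\mu(\overline G)\lesssim_{m,n}C_{0}\prod_{j=1}^{n}h_{j},\qquad\text{hence}\qquad\prod_{j=1}^{n}h_{j}\gtrsim_{m,n}\frac{\lambda r^{n}}{C_{0}},
$$
which is where the lower density (through $\lambda$) and the constant $C_{0}$ enter, and where the explicit constant in \eqref{E:deltadef} is calibrated.

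\textbf{Deducing \eqref{E:bigh}, \eqref{E:fczfat}, \eqref{e:disjoint}.} Since $h_{j}$ is the Gram--Schmidt height of $x_{j,r}-x$ over $\Span\{x_{1,r}-x,\dots,x_{j-1,r}-x\}$, we get $\cH^{n}(\Delta(x,x_{1,r},\dots,x_{n,r}))=\tfrac1{n!}\prod_{j}h_{j}$, while each $(n-1)$-face has vertices in $B(x,r)$ and so $\cH^{n-1}$-measure $\le(2r)^{n-1}/(n-1)!$. The identity $\dist(\text{vertex},\aff(\text{opposite face}))=n\,\cH^{n}(\text{simplex})/\cH^{n-1}(\text{opposite face})$ (cf.\ Lemma \ref{l:sg}(4)) then yields
$$
h_{\min}(x,x_{1,r},\dots,x_{n,r})=\frac{n\,\cH^{n}(\Delta(x,x_{1,r},\dots,x_{n,r}))}{\max_{i}\cH^{n-1}(\text{face}_{i})}\ge\frac{\prod_{j}h_{j}}{2^{n-1}r^{n-1}}\gtrsim_{m,n}\frac{\lambda r}{C_{0}},
$$
which is $\ge\delta r$ once the dimensional constants are matched to \eqref{E:deltadef}; note $\delta\le1$ since $\lambda\le2^{n}C_{0}$. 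For \eqref{E:fczfat}: if $y_{i}\in B_{i,r}=B(x_{i,r},5\eta r)$, we perturb the $n$ vertices $x_{1,r},\dots,x_{n,r}$ (keeping $x$ fixed) by less than $5\eta r$ each, and $5n\eta r=\delta r/2<\delta r$, so Lemma \ref{l:boundedheightchange} (with $k=n-1$, the perturbed vertices relabelled first) gives that $\Delta(x,y_{1},\dots,y_{n})$ is an $(n,\delta r-5n\eta r)=(n,\delta r/2)$-simplex. For \eqref{e:disjoint}: the lemma applies at scale $\delta r/3\le R$ and yields $x_{i,\delta r/3}\in B(x,\delta r/3)$, so $B_{i,\delta r/3}\subseteq B(x,\tfrac{\delta r}{3}(1+5\eta))\subseteq B(x,\tfrac{\delta r}{2})$ (using $\eta=\delta/(10n)\le\tfrac1{10}$), while \eqref{E:bigh} forces $|x_{i,r}-x|\ge\delta r$ (the face opposite $x_{i,r}$ contains $x$), so $B_{i,r}\subseteq\{y:|y-x|>\delta r-5\eta r\ge\tfrac{\delta r}{2}\}$; hence $B_{i,r}\cap B_{i,\delta r/3}=\emptyset$ for each $i$, and so $\mathbb B_{r}\cap\mathbb B_{\delta r/3}=\emptyset$.

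The main obstacle is the volume bound: one greedy step only produces a point far from a \emph{single} subspace, and the naive estimate ``there is mass outside the $\delta r$-tube of $V_{j-1}$'' gives merely $h_{j}\gtrsim\lambda^{1/(n-j+1)}r$, hence $\prod_{j}h_{j}\gtrsim\lambda^{H_{n}}r^{n}$ with $H_{n}=\sum_{k=1}^{n}1/k>1$, which is too weak for the linear dependence on $\lambda$ claimed in \eqref{E:deltadef}. Choosing the \emph{farthest} mass-dense point at each stage is exactly what traps $\overline G$ in a box whose $n$-content is controlled by $\mu(\overline G)/C_{0}$, and making this ``trapping box and its efficient covering by balls of radius $h_{n}$'' precise in $\R^{m}$ (rather than in the ambient $\R^{n}$, where the slab intersections would be unbounded in $m-n$ directions) is the only genuinely delicate point.
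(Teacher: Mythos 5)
Your proposal is correct, and it takes a genuinely different — and, I believe, more robust — route than the paper's. The paper's proof runs an induction in which, at step $k$, a mass-dense point $x_{k+1}$ is chosen anywhere outside the $\delta r$-tube $(V_k)_{\delta r}$ around $V_k = \aff\{x,x_1,\dots,x_k\}$, and then asserts that $x_{k+1}\notin(V_k)_{\delta r}$ ``will guarantee'' \eqref{E:bigh}. But this only controls $\dist(x_{k+1},V_k)$; it does not control the heights of the already-chosen $x_i$ over the \emph{enlarged} affine hulls that now contain $x_{k+1}$, and those can degrade (in the worst case $h_{\min}$ drops to about $\delta^n r$, not $\delta r$: e.g.\ in the plane take $x=0$, $x_1=(\delta r,0)$, $x_2=(r\sqrt{1-\delta^2},\delta r)$). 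Your choice of the \emph{farthest} mass-dense point at each stage is exactly what repairs this: the resulting nonincreasing heights $h_1\ge\dots\ge h_n>0$ trap $\overline G$ in a flag-adapted box, its covering by balls of radius comparable to $h_n$ plus upper-regularity forces $\prod_j h_j\gtrsim(\lambda/C_0)r^n$, and then $h_{\min}$ follows from $\cH^n(\Delta)=\tfrac1{n!}\prod_j h_j$ and Hadamard's bound on the $(n-1)$-faces, giving the linear-in-$\lambda$ dependence of \eqref{E:deltadef}. The opening step ($\mu(G)\ge\lambda r^n/2$), the perturbation argument for \eqref{E:fczfat} via Lemma \ref{l:boundedheightchange}, and the separation \eqref{e:disjoint} agree in spirit with the paper. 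Two cosmetic remarks: in the trapping-box covering you should use a box in the first $n-1$ adapted coordinates times a \emph{ball} of radius $h_n$ in the remaining $m-n+1$ (rather than a cube), so the constant depends on $n$ but not $m$, matching the statement's claim that $\delta=\delta(n,\lambda,C_0)$; and the numerical prefactor in $\delta$ you obtain will differ from \eqref{E:deltadef}, but \eqref{E:deltadef} contains an undefined index $k$ anyway (a leftover of the paper's induction), and only the functional form $\delta$ comparable to $\lambda/C_0$ matters for Theorem \ref{t:mainBeta}.
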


\begin{proof} (of Lemma \ref{t:likeNV}).
Let $m, n, \mu, C_{0}, \lambda$ and $R$ be as in the theorem statement. Define $\delta, \eta$ as in \eqref{E:deltadef} and \eqref{E:etadef}. 


Fix $0 < r < R$, and suppose there exist $\{x_{1}, \dots, x_{k}\}$ satisfying 
$$
h_{\min}(x,x_{1}, \dots, x_{k}) \ge \delta r \text{ and } \mu(B(x_{i}, 5 \eta r)) \ge \left( \frac{ \lambda \eta^{m}}{2^{m+1}} \right) r^{n}
$$ 
for all $i = 1, \dots, k$ and assume that $k < n$. Then, we will find a point $x_{k+1}$ such that $h_{\min}(x,x_{1}, \dots, x_{k+1}) \ge \delta r$ and $\mu(B(x_{k+1}, 5 \eta r)) \ge \left( \frac{ \lambda \eta^{m}}{2^{m+1}} \right) r^{n}$. Hence, induction will guarantee the theorem.\footnote{The proof of the inductive step clearly shows that we can also find a point $x_{1}$ with $|x_{1} - x| \ge \delta r$ and $\mu(B(x_{1}, 5 \eta r)) \ge \left( \frac{ \lambda \eta^{m}}{2^{m+1}} \right) r^{n}$.}

Let $V_{k} = \aff\{x, x_{1}, \dots, x_{k}\} = x + \Span \{x_{1} - x, \dots, x_{k} - x\}$.  Define
\begin{equation} \label{e:vkdr}
(V_{k})_{\delta r} = \{ y \in B(x,r) : \dist(y, V_{k}) < \delta r\}.
\end{equation}

Define $\rho = \rho(\lambda, C_{0}, n,k,r)$ by 
\begin{equation} \label{e:rhodef}
\rho = s r \quad \text{ where } s = \left( \frac{ \lambda }{C_{0}}  \frac{1}{2^{k+1} \cdot 5^{n} } \right)
\end{equation}

Let
\begin{equation} \label{e:CG1}
\cG_{1} = \{ B(y, 5 \rho) \mid y \in V_{k} \cap B(x,r) \}.
\end{equation}
We first note that $\cG_{1}$ is a cover of $B(x,r) \cap (V_{k})_{\delta r}$ since $\delta = \frac{5}{2}s$ implies $\delta r = \frac{5 \rho}{2}$. So, by Vitali we can find a subfamily of sets $\{B(x_{i}, 5 \rho) \}_{i=1}^{N^{\prime}}$ such that $B(x,r) \cap (V_{k})_{\delta r} \subset \bigcup_{i} B(x_{i}, 5 \rho)$  and $\{B(x_{i}, \rho)\}_{i=1}^{N^{\prime}}$ is disjoint.

A priori, $N^{\prime}$ could be infinite, but we will see that $N^{\prime} \le N_{k,s} = N_{n,k,\lambda, C_{0}}$ where
\begin{equation} \label{e:Nks}
N_{n,k,\lambda, C_{0}} = \left( \frac{2}{s} \right)^{k}.
\end{equation}

Indeed, since $B \in \cG_{1} \implies B \cap V_{k}$ is a $k$-dimensional ball of radius $5 \rho$, 
\begin{align*}
N^{\prime} \omega_{k} \rho^{k} &= \sum_{i=1}^{N^{\prime}} \cH^{k} \left( V_{k} \cap B\left(x_{i} , \rho \right) \right) = \left(\cH^{k}  \restr V_{k} \right) \left( \bigcup_{i=1}^{N^{\prime}} B\left(x_{i}, \rho \right) \right) \\
& \le \left(\cH^{k} \restr V_{k} \right) \left( B(x, 2r) \right) = \omega_{k} (2r)^{k}
\end{align*}
so that $N^{\prime} \le 2^{k} (r \rho^{-1})^{k} = (2 s^{-1})^{k} = N_{n,k,\lambda, C_{0}}$.

We wish to show that our choice of $\delta$ forces $\displaystyle \mu(B(x,r) \cap (V_{k})_{\delta r}) \le \frac{ \lambda r^{n}}{2}$ so that 
\begin{equation} \label{E:bigcomp}
\displaystyle \mu(B(x,r) \setminus (V_{k})_{\delta r}) \ge \frac{ \lambda r^{n}}{2}.
\end{equation}

Indeed,
\begin{align*}
\mu \left( (V_{k})_{\delta r} \cap B(x,r) \right) &\le \sum_{i=1}^{N^{\prime}} \mu(B(x_{i}, 5 \rho)) \le \sum_{i=1}^{N^{\prime}} C_{0} (5 \rho)^{n} \le N_{n,k, \lambda, C_{0}} C_{0} 5^{n} s^{n} r^{n} \\
& = \left( \frac{2}{s} \right)^{k} C_{0} 5^{n} s^{n} r^{n}.
\end{align*}
So, it suffices to show
$$
2^{k} C_{0} 5^{n} s^{n-k} r^{n} \le \frac{ \lambda r^{n}}{2}
$$
which holds if and only if
$$
s^{n-k} \le \frac{ \lambda }{2^{k+1}  5^{n}  C_{0}}.
$$
Since $k < n$ this implies that our choice of $s$ in \eqref{e:rhodef} suffices to ensures \eqref{E:bigcomp}.

Now we claim that \eqref{E:bigcomp} guarantees the existence of some $x_{k+1} \in B(x,r) \setminus (V_{k})_{\delta r}$ such that \eqref{E:0} holds. The fact that $x_{k+1} \notin (V_{k})_{\delta r}$ will guarantee \eqref{E:bigh}.

To this end, let us consider the family of balls
$$
\cG_{2} = \{ B(y, 5 \eta r) \mid y \in B(x,r) \setminus (V_{k})_{\delta r} \}.
$$
Then $B \in \cG_{2}$ implies $B \subset B(x,2r)$ since $5 \eta < \frac{\delta}{2n} < 1$. Moreover, $\cG_{2}$ covers $B(x,r) \setminus (V_{k})_{\delta r}$. In particular, Vitali ensures there exists a subfamily $\{B(x_{i}, 5 \eta r) \}_{i=1}^{M^{\prime}}$ that covers $B(x,r) \setminus (V_{k})_{\delta r}$ and $\{B(x_{i}, \eta r) \}_{i=1}^{M^{\prime}}$ is a disjoint family. Again, we have no apriori estimate on $M^{\prime}$, but disjointness and containment in $B(x,2r)$ yields
$$
\omega_{m}  (r \eta)^{m} M^{\prime} = \sum_{i=1}^{M^{\prime}} \cH^{m} \left( B(x_{i}, r \eta )\right) \le \cH^{m}(B(x,2r)) = \omega_{m} (2r)^{m}.
$$
Consequently, we define $M_{m,n,\lambda,C_{0}}$ so that
\begin{equation} \label{E:mprime}
M^{\prime} \le (2 \eta^{-1})^{m} = M_{\eta, m} = M_{m,n,\lambda, C_{0}}.
\end{equation}

Combining \eqref{E:bigcomp} and \eqref{E:mprime}, we deduce
\begin{align} 
\nonumber \frac{ \lambda r^{n}}{2} &\le \mu \left( B(x,r) \setminus (V_{k})_{\delta r} \right) \\
\nonumber & \le \left( \mu \restr B(x,r) \right) \left( \bigcup_{i=1}^{M^{\prime}} B(x_{i}, 5 \eta r) \right) \\ 
\nonumber & \le \sum_{i=1}^{M^{\prime}} (\mu \restr B(x,r)) \left( B(x_{i}, 5 \eta r) \right) \\
\label{E:10} &\le M_{m,n,\lambda, C_{0}} \max \left \{ (\mu \restr B(x,r)) (B(x_{i}, 5 \eta r) ) \mid i \in \{1, \dots, M^{\prime} \} \right\}.
\end{align}
Choosing $k+1 = j$ such that 
$$
(\mu \restr B(x,r))(B(x_{j}, 5 \eta r)) = \max \left \{ (\mu \restr B(x,r)) (B(x_{i}, 5 \eta r) ) \mid i \in \{1, \dots, M^{\prime} \} \right\},
$$
we have from \eqref{E:10} that
$$
(\mu \restr B(x,r)) (B(x_{k+1}, 5 \eta r)) \ge \frac{ \lambda r^{n}}{2 M_{m,n,\lambda, C_{0}}} \ge \frac{ \lambda r^{n} \eta^{m}}{2^{m+1}}
$$
verifying that $x_{k+1} \in B(x,r) \setminus (V_{k})_{\delta r}$ satisfies \eqref{E:0}.

It only remains to show \eqref{E:fczfat} and \eqref{e:disjoint}, which follow quickly from the work already done. Indeed, Lemma \ref{l:boundedheightchange} and \eqref{E:bigh} verify \eqref{E:fczfat}. On the other hand, \eqref{e:disjoint} follows from $\mathbb{B}_{\delta r/3} \subset B(x, \delta r/3)$ and \eqref{E:bigh}. 
\end{proof}

\begin{proof}(Of Theorem \ref{t:mainBeta})
Fix $\mu, R, \lambda $ as in the theorem statement. Let $\cK$ be some $(\mu,2)$-proper integrand, and $0 < r \le R$. Let $\{x_{i,r}\}$, $B_{i,r}$ and $\mathbb{B}_{r}$ be as in Lemma \ref{t:likeNV}. Then, first replacing the infimum with an average over fixed planes, and then applying \eqref{E:0}, yields
\begin{align*}
 \obeta^{n}_{\mu;2}&(x,r)^{2}  = \inf_{L \ni x} \frac{1}{r^{n}} \int_{B(x,r)} \left( \frac{\dist(z, L)}{r} \right)^{2} d \mu(z) \\
& \le \int_{\mathbb{B}_{r}} \int_{B(x,r)}  \left(\frac{\dist(z,\aff\{x,y_{1}, \dots, y_{n}\})^{2}}{r}\right)^{2} \frac{ d\mu(z) d \mu^{n}(y_{1}, \dots, y_{n})}{\mu^{n}(\mathbb{B}_{r}) r^{n}} \\
& \le C \int_{\mathbb{B}_{r}} \int_{B(x,r)} \left(\frac{ \dist(z, \aff \{x,y_{1}, \dots, y_{n}\})}{r} \right)^{2} \frac{d \mu(z) d \mu^{n}(y_{1}, \dots, y_{n})}{r^{n^{2} + n}},
\end{align*}
where $C= C(m,n,\lambda, C_{0})$. Since $\{x,z\} \cup B_{i,r} \subset B(x,r)$ for all $i=1, \dots, n$,  \eqref{E:fczfat} ensures we can apply \eqref{e:p303} in the final integral above, so that
\begin{equation} \label{e:betaest1}
\obeta^{n}_{\mu;2}(x,r)^{2} \le C \int_{\mathbb{B}_{r}} \int_{B(x,r)} \cK(x,z,y_{1}, \dots, y_{n})^{2} d\mu(z)d \mu^{n}(y_{1}, \dots, y_{n}).
\end{equation}
Finally, using the fact that for any $0 < \sigma < 1$, 
\begin{equation*}
\int_{0}^{R} \obeta^{n}_{\mu;2}(x,r)^{2} \frac{dr}{r} \le C_{\sigma} \sum_{j \ge 0} \obeta^{n}_{\mu;2}(x, \sigma^{j} R)
\end{equation*}
when $\sigma = \delta/3$ and writing $r_{j} \defeq \left( \frac{\delta}{3} \right)^{j} R$, \eqref{e:disjoint} and \eqref{e:betaest1} yield
\begin{equation*}
\int_{0}^{R} \obeta^{n}_{\mu;2}(x,r)^{2} \frac{dr}{r} \le C \int_{\cup_{j \ge 0} \mathbb{B}_{r_{j}}} \int_{B(x,r)} \cK(x,z,y_{1}, \dots, y_{n})^{2} d \mu^{n+1}(z,y_{1}, \dots, y_{n}),
\end{equation*}
where $C=C(m,n,\lambda, C_{0})$. Since for all $j$, $\mathbb{B}_{r_{j}} \times B(x,r) \subset B(x,r)^{n+1}$, the theorem follows from non-negativity of the integrand by replacing $\cup_{j \ge 0} \mathbb{B}_{r_{j}} \times B(x,r)$ with $B(x,r)^{n+1}$.
\end{proof}

\pagestyle{fancy}

\bibliographystyle{alpha}
\bibdata{references}
\bibliography{references}

\begin{thebibliography}{MMV96}

\bibitem[AT15]{azzam2015characterization}
Jonas Azzam and Xavier Tolsa.
\newblock Characterization of n-rectifiability in terms of {J}ones’ square
  function: Part ii.
\newblock {\em Geometric and Functional Analysis}, 25(5):1371--1412, 2015.

\bibitem[BS15]{badger2015multiscale}
Matthew Badger and Raanan Schul.
\newblock Multiscale analysis of 1-rectifiable measures: necessary conditions.
\newblock {\em Math. Ann.}, 361(3-4):1055--1072, 2015.

\bibitem[BS16]{badger2016two}
Matthew Badger and Raanan Schul.
\newblock Two sufficient conditions for rectifiable measures.
\newblock {\em Proc. Amer. Math. Soc.}, 144(6):2445--2454, 2016.

\bibitem[BS17]{badger2017multiscale}
Matthew Badger and Raanan Schul.
\newblock Multiscale analysis of 1-rectifiable measures {II}:
  {C}haracterizations.
\newblock {\em Anal. Geom. Metr. Spaces}, 5:1--39, 2017.

\bibitem[DS91]{david1991singular}
Guy David and Stephen Semmes.
\newblock {\em Singular Integrals and Rectifiable Sets in R: Au-del{\`a} Des
  Graphes Lipschitziens}.
\newblock Soci{\'e}t{\'e} math{\'e}matique de France, 1991.

\bibitem[DS93]{david1993analysis}
Guy David and Stephen Semmes.
\newblock {\em Analysis of and on uniformly rectifiable sets}, volume~38.
\newblock American Mathematical Soc., 1993.

\bibitem[ENV16]{edelen2016quantitative}
Nick Edelen, Aaron Naber, and Daniele Valtorta.
\newblock Quantitative {R}eifenberg theorem for measures.
\newblock {\em arXiv preprint arXiv:1612.08052}, 2016.

\bibitem[Far99]{farag1999riesz}
Hany~M Farag.
\newblock The {R}iesz kernels do not give rise to higher dimensional analogues
  of the {M}enger-{M}elnikov curvature.
\newblock {\em Publicacions Matem{\`a}tiques}, pages 251--260, 1999.

\bibitem[Fed14]{federer2014geometric}
Herbert Federer.
\newblock {\em Geometric measure theory}.
\newblock Springer, 2014.

\bibitem[GE92]{gariepy1992measure}
LC~Evans-RF Gariepy and Lawrence~C Evans.
\newblock Measure theory and fine properties of functions, 1992.

\bibitem[Jon90]{jones1990rectifiable}
Peter~W Jones.
\newblock Rectifiable sets and the traveling salesman problem.
\newblock {\em Inventiones Mathematicae}, 102(1):1--15, 1990.

\bibitem[Kol16]{kolasinski2016estimating}
S{\l}awomir Kolasi{\'n}ski.
\newblock Estimating discrete curvatures in terms of beta numbers.
\newblock {\em arXiv preprint arXiv:1605.00939}, 2016.

\bibitem[L{\'e}g99]{leger1999menger}
Jean-Christophe L{\'e}ger.
\newblock {M}enger curvature and rectifiability.
\newblock {\em Annals of mathematics}, 149:831--869, 1999.

\bibitem[LW09]{lerman2009high}
Gilad Lerman and J~Tyler Whitehouse.
\newblock High-dimensional {M}enger-type curvatures—part ii: d-separation and
  a menagerie of curvatures.
\newblock {\em Constructive Approximation}, 30(3):325--360, 2009.

\bibitem[LW11]{lerman2011high}
Gilad Lerman and J~Tyler Whitehouse.
\newblock High-dimensional {M}enger-type curvatures. {P}art i: {G}eometric
  multipoles and multiscale inequalities.
\newblock {\em Revista Matem{\'a}tica Iberoamericana}, 27(2):493--555, 2011.

\bibitem[LW12]{lerman2012least}
Gilad Lerman and J~Tyler Whitehouse.
\newblock Least squares approximations of measures via geometric condition
  numbers.
\newblock {\em Mathematika}, 58(1):45--70, 2012.

\bibitem[Mat98]{mattila1998rectifiability}
Pertti Mattila.
\newblock Rectifiability, analytic capacity, and singular integrals.
\newblock In {\em Proceedings of the International Congress of Mathematicians},
  volume~2, pages 657--664, 1998.

\bibitem[Mat99]{mattila1999geometry}
Pertti Mattila.
\newblock {\em Geometry of sets and measures in Euclidean spaces: fractals and
  rectifiability}, volume~44.
\newblock Cambridge university press, 1999.

\bibitem[Mel95]{melnikov1995analytic}
Mark~Samuilovich Melnikov.
\newblock Analytic capacity: discrete approach and curvature of measure.
\newblock {\em Sbornik: Mathematics}, 186(6):827, 1995.

\bibitem[Meu15]{meurer2015fintegral}
Martin Meurer.
\newblock Integral {M}enger curvature and rectifiability of $ n $-dimensional
  {B}orel sets in {E}uclidean {$ N $}-space.
\newblock {\em
  https://publications.rwth-aachen.de/record/466076/files/466076.pdf}, 2015.

\bibitem[Meu18]{meurer2018integral}
Martin Meurer.
\newblock Integral {M}enger curvature and rectifiability of {$n$}-dimensional
  {B}orel sets in {E}uclidean {$N$}-space.
\newblock {\em Transactions of the American Mathematical Society},
  370(2):1185--1250, 2018.

\bibitem[MMV96]{mattila1996cauchy}
Pertti Mattila, Mark~S Melnikov, and Joan Verdera.
\newblock The {C}auchy integral, analytic capacity, and uniform rectifiability.
\newblock {\em Annals of Mathematics}, 144(1):127--136, 1996.

\bibitem[MV95]{melnikov1995geometric}
Mark~S Melnikov and Joan Verdera.
\newblock A geometric proof of the {$L^{2}$} boundedness of the {C}auchy
  integral on {L}ipschitz graphs.
\newblock {\em International Mathematics Research Notices}, 1995(7):325--331,
  1995.

\bibitem[Oki92]{okikiolu1992characterization}
Kate Okikiolu.
\newblock Characterization of subsets of rectifiable curves in {${\bf R}^n$}.
\newblock {\em J. London Math. Soc. (2)}, 46(2):336--348, 1992.

\bibitem[Paj97]{pajot1997conditions}
Herv\'{e} Pajot.
\newblock Conditions quantitatives de rectifiabilit\'{e}.
\newblock {\em Bull. Soc. Math. France}, 125(1):15--53, 1997.

\bibitem[Tol15]{tolsa2015characterization}
Xavier Tolsa.
\newblock Characterization of n-rectifiability in terms of {J}ones’ square
  function: part i.
\newblock {\em Calculus of Variations and Partial Differential Equations},
  54(4):3643--3665, 2015.

\bibitem[Tol17]{tolsa2017rectifiability}
Xavier Tolsa.
\newblock Rectifiability of measures and the $\beta_{p}$ coefficients.
\newblock {\em arXiv preprint arXiv:1708.02304v1}, 2017.

\end{thebibliography}

\end{document}